\numberwithin{equation}{section}
\newtheorem{theorem}{Theorem}[section]
\newtheorem{lemma}[theorem]{Lemma}
\newtheorem{corollary}[theorem]{Corollary}
\newtheorem{proposition}[theorem]{Proposition}
\theoremstyle{definition}
\newtheorem{definition}[theorem]{Definition}
\newtheorem{notation}[theorem]{Notation}
\newtheorem{remark}[theorem]{Remark}
\newcommand{\C}{\mathbb{C}}
\newcommand{\N}{\mathbb{N}}
\newcommand{\R}{\mathbb{R}}
\newcommand{\Aut}{\mathop{{\rm Aut}}}
\newcommand{\id}{\mathop{{\rm id}}}
\newcommand{\hartogs}[1]{ \mathfrak{h}^{#1} } 
\newcommand{\hartogsh}[1]{ {\widetilde{\mathfrak{h}}}^{#1} } 
\begin{document}
\author{Rafael B. Andrist}
\author{Nikolay Shcherbina}
\author{Erlend F. Wold}
\title[Extension of vector bundles and sprays]{The Hartogs Extension Theorem for holomorphic vector bundles and sprays}
\date{\today}
\begin{abstract}
We study ellipticity properties of complements of compact subsets of Stein manifolds.  
\end{abstract}

\subjclass[2010]{Primary 32D15, 32L05, Secondary 32E10}
\keywords{Extension of vector bundles, Hartogs phenomena, Oka theory}

\maketitle

\section{Introduction}
The main purpose of the present article is to investigate ellipticity properties of complements of compact sets in Stein manifolds, and our main result is the following.  

\begin{theorem}\label{thm2}
Let $X$ be a Stein manifold with $\dim X \geq 3$ and let $K \subset X$ be a compact subset.
\begin{itemize}
\item If $X \setminus K$ is elliptic, then $K$ has only finitely many accumulation points.
\item If $X \setminus K$ is subelliptic and smoothly bounded, then $K$ is empty.
\end{itemize} 
\end{theorem}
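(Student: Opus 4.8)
\emph{The plan} is to use the Hartogs extension theorems of this paper --- for holomorphic vector bundles and for sprays --- to extend the (composed) dominating spray from $X\setminus K$ across the hole $K$, and then to exploit that the extension still coincides with the original spray over $X\setminus K$, where the original takes values in $X\setminus K$. We first reduce to $X\setminus K$ connected: a relatively compact connected component would, under a proper embedding $X\hookrightarrow\C^M$, be an open subset of a bounded domain and hence Kobayashi hyperbolic, which no positive-dimensional subelliptic manifold is (such a manifold carries an entire curve through every point in every direction); discard such components. Write $s\colon E\to X\setminus K$ for a dominating spray (first bullet) and $s_1,\dots,s_m$, $s_j\colon E_j\to X\setminus K$, for a dominating family (second bullet).

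\emph{Step 1: extension and the exceptional finite set.} Since $\dim X\ge 3$, the Hartogs extension theorem for holomorphic vector bundles extends $E$ (resp.\ each $E_j$) to a holomorphic vector bundle $\widetilde E$ (resp.\ $\widetilde E_j$) over $X$, and the Hartogs extension theorem for sprays then extends $s$ (resp.\ each $s_j$) to a holomorphic map $\widetilde s$ (resp.\ $\widetilde s_j$), defined on a neighbourhood of the zero section of $\widetilde E$ over all of $X$, with values in $X$, restricting to the identity on the zero section, and agreeing with $s$ over $X\setminus K$. (This is the only point where $\dim X\ge 3$ is used; the statement fails for $\dim X=2$, where bundles on punctured balls need not extend.) In the subelliptic case we compose the $\widetilde s_j$ to a single map $\widetilde\sigma$, defined near the zero section of the iterated bundle over all of $X$, fixing the zero section, coinciding over $X\setminus K$ with the composed spray of the family, and submersive at $0_p$ exactly where $\{(ds_j)_{0_p}\}$ spans $T_pX$. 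Let $F\subset X$ be the locus where the extended (composed) spray fails to be submersive along the zero section; this is the rank-dropping locus of a holomorphic bundle map into $TX$, hence a closed analytic subset of $X$. By the dominating hypothesis $F\cap(X\setminus K)=\emptyset$, so $F\subset K$ is a compact analytic subset of the Stein manifold $X$ --- hence finite.

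\emph{Step 2: the accumulation set of $K$ is contained in $F$.} Let $A$ be the set of accumulation points of $K$; we show $A\subset F$. If not, pick $p_0\in A\setminus F$; if $p_0$ lies in the interior of $K$, replace it by a point of $\partial(\operatorname{int}K)\setminus F$, which exists because (as $K$ is closed and $X$ is a manifold of real dimension $\ge 2$) the set $\partial(\operatorname{int}K)$ is nonempty and has no isolated points, hence is infinite, and each of its points is an accumulation point of $K$. Thus $p_0\in\partial K\subset\overline{X\setminus K}$; choose $q_k\in K\setminus\{p_0\}$ with $q_k\to p_0$ and $p\to p_0$ with $p\in X\setminus K$. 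Since $p_0\notin F$, the restriction of $\widetilde s$ (resp.\ $\widetilde\sigma$) to the fibre over $p_0$ is a holomorphic submersion near the origin, so its image contains a neighbourhood of $p_0$; hence for $k$ large $q_k=\widetilde s(w_k)$ with $w_k\to 0$ in that fibre. Submersivity is stable under $C^1$-small perturbations and the map depends holomorphically --- so continuously in $C^1$ --- on the base point; therefore, fixing such a $k$ and taking $p\in X\setminus K$ close enough to $p_0$, the image of the fibre over $p$ under $\widetilde s$ still contains a ball of definite radius about a point as close as we like to $q_k$, hence contains $q_k$. But over $X\setminus K$ that image equals the image of the original (composed) spray over $p$, which lies in $X\setminus K$; this contradicts $q_k\in K$. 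Hence $A\subset F$ is finite.

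\emph{Conclusions, and the main obstacle.} For the first bullet, $A$ finite is exactly the assertion that $K$ has finitely many accumulation points. For the second bullet, assume in addition that $X\setminus K$ is smoothly bounded and $K\neq\emptyset$. Then $\partial K=\partial(X\setminus K)$ is a nonempty smooth real hypersurface, hence infinite, and each of its points is an accumulation point of $K$ (approached by the $K$-side of $\partial K$); so $A\supseteq\partial K$ is infinite, contradicting Step 2. Therefore $K=\emptyset$. The main obstacle is the spray-extension input of Step 1: the fibres of $E$ are non-compact, so the ``hole'' $\pi^{-1}(K)$ meets every open disc subbundle in a non-compact set and classical Hartogs--Bochner does not apply to the coordinate functions of $s$ directly; one must instead work in the Stein total space of $\widetilde E$ --- extending first the bundle, then the Taylor data of the spray along the zero section, with estimates to ensure convergence --- which is precisely the Hartogs extension theorem for sprays invoked above. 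The remaining care is in making the perturbation in Step 2 quantitative and uniform, and in the reduction there to an accumulation point lying on $\partial K$.
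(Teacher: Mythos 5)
Your overall strategy is the same as the paper's: extend the bundle(s) and spray map(s) across $K$ by the Hartogs theorems, observe that the extended spray is dominating outside a small exceptional set, and derive a contradiction at boundary points of $K$ from the fact that the original spray maps fibres over $X\setminus K$ into $X\setminus K$ while the extended spray, being submersive near the zero section, must hit points of $K$ from nearby fibres. Your treatment of the subelliptic case by passing to the \emph{composed} spray of the family is a genuine variant of the paper's argument (the paper instead looks at the individual hitting sets $A_{p,j}=\pi_j\bigl(\tilde s_j|_V^{-1}(p)\bigr)$ and uses smoothness of $bK$ to find one that is transversal to the boundary); your route is arguably cleaner and would even yield ``finitely many accumulation points'' in the subelliptic case without the smooth boundary hypothesis, which is then only needed to pass from ``finite accumulation set'' to ``empty''.

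There is, however, a genuine gap in Step 1 that propagates into Step 2. Theorem \ref{thm1} does \emph{not} extend $E$ to a bundle over all of $X$; it only extends it to $X\setminus P$ for a finite set $P\subset K$, and this exceptional set cannot in general be removed (extension of rank $\ge 2$ bundles across isolated points can fail -- that is precisely why $P$ appears in the statement). Consequently your degeneracy locus $F$ is only a closed analytic subset of $X\setminus P$; being contained in $K$ it is compact and hence discrete in $X\setminus P$ (a positive-dimensional component would extend by Remmert--Stein to a compact analytic subset of the Stein manifold $X$), but it may be an \emph{infinite} set accumulating at points of $P$. Your conclusion ``$F$ is a compact analytic subset of the Stein manifold $X$, hence finite'' therefore does not follow, and with it the final step ``$A\subseteq F$ is finite'' of the first bullet collapses: $A\subseteq F\cup P$ with $F\cup P$ countable does not bound the number of accumulation points of $K$. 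The repair is short and is what the paper does: run your contradiction argument at \emph{every} point $p_0\in bK\setminus(P\cup F)$, targeting $p_0$ itself rather than an auxiliary sequence $q_k$ (the image of the fibre over a nearby $p\in X\setminus K$ under the extended spray contains a ball of definite radius around $p$, hence contains $p_0\in K$, contradicting that it lies in $X\setminus K$). This gives $bK\subseteq P\cup F$, hence $\operatorname{int}K=\emptyset$ and $K\subseteq P\cup F$ by your separation argument, and since the accumulation points of $P\cup F$ lie in the finite set $P$, the accumulation points of $K$ do too. Your second bullet survives even without this repair, since a nonempty smooth hypersurface cannot be contained in the countable set $P\cup F$. (Two smaller points: the cited extension theorems require $K$ holomorphically convex with connected complement, so some reduction -- replacing $K$ by a hull and invoking uniqueness of extensions -- is needed beyond discarding relatively compact components; and your hyperbolicity argument shows such components do not exist rather than that $X\setminus K$ is connected.)
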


For the notion of ellipticity in the sense of Gromov \cites{Gromov-elliptic} as well as the notion of subellipticity we refer to the book of Forstneri\v{c} \cites{Forstneric-book}. A short overview can be found in this article in Section \ref{sec-oka}.

The main ingredient in the proof of Theorem \ref{thm2} is the following Hartogs type extension theorem for holomorphic vector bundles on Stein manifolds which was stated by Siu in \cite{Siu1971}:

\begin{theorem}\label{thm1}
Let $X$ be a Stein manifold and let $K\subset X$ be a holomorphically convex compact set with connected complement.  
Let $E \to X\setminus K$ be a holomorphic vector bundle.
If $\dim X \geq 3$, then there is a finite set of points $P \subset K$ such that $E$ extends to a holomorphic vector bundle on $X \setminus P$.
\end{theorem}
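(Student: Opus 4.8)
The plan is to derive Theorem~\ref{thm1} from a coherent-sheaf extension statement together with an elementary finiteness observation. Let $\mathcal{E}$ denote the sheaf of holomorphic sections of $E$, a locally free sheaf on $X\setminus K$, and let $j\colon X\setminus K\hookrightarrow X$ be the inclusion. The heart of the matter is the claim that \emph{$\mathcal{E}$ extends to a coherent analytic sheaf $\mathcal{F}$ on all of $X$, with $\mathcal{F}|_{X\setminus K}=\mathcal{E}$}. Granting this, the theorem follows immediately. The set $A\subseteq X$ where $\mathcal{F}$ fails to be locally free is a closed analytic subset of $X$ (it is the zero set of a Fitting ideal of $\mathcal{F}$), and since $\mathcal{F}$ agrees with the locally free sheaf $\mathcal{E}$ on $X\setminus K$, we have $A\subseteq K$. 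Hence $A$ is a \emph{compact} analytic subset of the Stein manifold $X$, and therefore a finite set of points, because $X$ embeds properly into some $\C^{N}$ and thus contains no positive-dimensional compact subvariety. Setting $P:=A$, the restriction $\mathcal{F}|_{X\setminus P}$ is a holomorphic vector bundle on $X\setminus P$ that coincides with $E$ on $X\setminus K$, which is the assertion of the theorem. Note that a nonempty $P$ is in general unavoidable: already when $K$ is a single point, the cokernel $\mathcal{Q}$ of $\mathcal{O}_{\C^{3}}\xrightarrow{(z_1,z_2,z_3)}\mathcal{O}_{\C^{3}}^{\oplus 3}$ is reflexive and is locally free away from the origin but not at it, so the vector bundle $\mathcal{Q}|_{\C^{3}\setminus\{0\}}$ has no extension across $0$ --- any locally free extension would, being reflexive, have to equal $\mathcal{Q}$.

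Everything therefore comes down to extending the locally free sheaf $\mathcal{E}$ across the compact set $K$, and I would carry this out by the classical continuation (inward-bumping) method for coherent analytic sheaves. Using that $K$ is holomorphically convex, fix a smooth strictly plurisubharmonic exhaustion $\rho$ of $X$ with $K\subset\{\rho<0\}$ and with all sublevel sets holomorphically convex. One then extends $\mathcal{E}$ in stages to coherent sheaves on domains of the form $X\setminus K_{t}$, where $K_{t}\subseteq K$ is a shrinking family of compacta with $K_{0}=K$, by repeatedly pushing the sheaf inward across small strictly pseudoconvex bumps attached to $\partial K_{t}$. A single bump move amounts to a local extension statement: a coherent sheaf defined on the complement of a strictly pseudoconvex bump extends across it. Its proof reduces --- via a local free presentation of the sheaf --- to extending the presentation matrix, which is a Hartogs-type continuation of holomorphic functions (this already needs $\dim X\ge 2$), together with the vanishing of the first cohomology of the relevant domains with coefficients in $\mathcal{E}$ and in the associated sheaves $\mathcal{H}om(\mathcal{E},\mathcal{E})$. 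The hypotheses of the theorem enter exactly here: $\dim X\ge 3$ guarantees that $\mathcal{E}$, being locally free and hence of depth $\ge 3$, produces no obstruction coming from codimension two; holomorphic convexity of $K$ and of the $K_{t}$ supplies the required cohomology vanishing on the complements; and connectedness of $X\setminus K$ gives the Hartogs phenomenon --- uniqueness of the extensions --- which forces the successive local moves to be compatible. Running the procedure until $K_{t}=\emptyset$ produces the coherent sheaf $\mathcal{F}$ on $X$. (Alternatively, one may invoke an appropriate continuation theorem for coherent analytic sheaves --- in the spirit of Trautmann, Frisch--Guenot and Siu --- and reduce to verifying its hypotheses in the present situation.)

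The main obstacle is the global bookkeeping in this last step. Each bump move produces a coherent extension, but coherent extensions of a given sheaf across a given set are not unique, so the real work is to arrange that the locally constructed extensions patch together as the procedure runs --- either by realising them canonically as subsheaves of a single fixed ambient sheaf (a relative gap sheaf), or by checking that the obstruction to gluing, which lives in an $H^{1}$ with coefficients in a $\mathcal{H}om$-sheaf, vanishes for the same reasons of dimension and convexity. A related subtlety is that over the \emph{interior} of $K$ the naive direct image $j_{*}\mathcal{E}$ is the wrong object --- it forgets the interior entirely --- so there $\mathcal{E}$ must genuinely be rebuilt; this is precisely what the bumping accomplishes, but verifying that the cohomological conditions powering it hold uniformly throughout, using nothing beyond $\dim X\ge 3$, holomorphic convexity of $K$, and connectedness of $X\setminus K$, is where the technical difficulty is concentrated.
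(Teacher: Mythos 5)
Your reduction is sound as far as it goes: if the locally free sheaf $\mathcal{E}$ extends to a coherent sheaf $\mathcal{F}$ on $X$, then the non-locally-free locus is a compact analytic subset of a Stein manifold contained in $K$, hence finite, and the theorem follows; your Koszul example showing that $P\neq\emptyset$ is in general unavoidable is also correct. But the entire content of the theorem is the existence of the extension, and there your sketch has genuine gaps rather than omitted routine details. First, the single ``bump move'' is not a Hartogs continuation of a presentation matrix: extending the matrix entries does not control where the extended presentation drops rank or changes its kernel, and the correct local statement --- a vector bundle on a Hartogs-type domain in $\triangle^n$, $n\ge 3$, extends to a coherent sheaf on $\triangle^n$ which is locally free off a \emph{finite} set of points --- is a substantial theorem of Siu, used in the paper in Lemma \ref{extendsheaf}; this is where the punctures $P$ are actually created, and your sketch never produces them. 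Relatedly, the claim that holomorphic convexity of $K$ (or of the $K_t$) supplies the $H^1$-vanishing powering the bump move is unjustified: cohomology of complements of compacta is governed by concavity in the sense of Andreotti--Grauert, not by convexity of the removed set, and no such global vanishing is available here.

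Second, you correctly flag the patching of the local extensions as ``the main obstacle'' but do not resolve it, and you omit entirely the other place where the real work lies: the critical points of the exhaustion. Any shrinking family $K_t$ with smooth boundaries must pass through Morse-critical transitions, across which the level sets change topology and the bumping argument breaks down; the paper devotes Section \ref{modification} to flattening the exhaustion near a nice critical point so that the critical level becomes a totally real Lipschitz graph, and then invokes the removability of such sets for vector bundles (Theorem \ref{thm3}, resting on the pseudoconcavity and polynomial-convexity results of Section \ref{concave}). The compatibility of successive local extensions is likewise not handled by an $H^1$ computation in the paper, but by extending bundle isomorphisms --- viewed as sections of an associated fibre bundle with fibre $\mathrm{GL}_r(\C)$ --- across the relevant sets (Theorem \ref{extensionsectionandisom} and Proposition \ref{extensionsection}). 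So while your outline points in the right general direction (a continuity method along an exhaustion of $X\setminus K$, ending with a finite bad set), each of the three steps you defer is precisely where the proof lives, and two of the mechanisms you propose for them would not work as stated.
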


For proving this result Siu suggested to use methods from Andreotti--Grauert theory. To our knowledge no proof of this statement exists in the literature, and so we carry it out in detail. It turns out that the extension of vector bundles across critical level sets of an exhaustion function poses some technical difficulties which may be of independent interest, see Theorem \ref{thm3}.

A proof of Theorem \ref{thm1} in the case of line bundles was recently given by Forn\ae ss--Sibony--Wold \cite{FornaessSibonyWold}, and a description of the obstructions for the extension of roots of line bundles in dimension two was given by Ivashkovich \cite{Ivashkovich2}.

\begin{theorem}\label{thm3}
Let $X$ be a complex manifold with $\dim X \geq 3$ and let $M \subset X$ be a closed totally real Lipschitz subset of $X$.
Then for any holomorphic vector bundle $E \to X \setminus M$, there exists a discrete set of points $P \subset M$ such that $E$ extends to a holomorphic vector bundle on $X \setminus P$.
\end{theorem}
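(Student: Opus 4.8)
The plan is to reduce the statement to a local extension problem along $M$, to solve that problem by first extending $E$ across $M$ as a coherent analytic sheaf and then locating the points where that sheaf is not locally free, and finally to glue the local solutions together.

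\emph{Localisation and gluing.} First I would observe that it is enough to produce, for each $p\in M$, an open neighbourhood $U$ of $p$ in $X$ and a finite set $Q\subset M\cap U$ such that $E|_{U\setminus M}$ extends holomorphically to a vector bundle on $U\setminus Q$. Indeed, total reality forces $\dim_\R M\le\dim_\R X-3$, so $M$ does not locally separate and the sets $U\setminus M$, as well as all of their pairwise intersections, are connected; hence a holomorphic vector bundle defined on the complement of a discrete subset has at most one extension across that subset (apply the identity theorem to holomorphic transition cocycles). Two local extensions therefore restrict to the same bundle over overlaps and glue to a holomorphic vector bundle on $X\setminus P$, where $P$ is the union of the sets $Q$; taking the covering of a neighbourhood of $M$ locally finite makes $P$ locally finite, and since $P\subset M$ and $M$ is closed, $P$ is discrete in $X$. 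From now on fix $p\in M$ and a relatively compact coordinate ball $U$ about $p$, and set $K:=\overline U\cap M$, a compact set.

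\emph{Extending across $K$ as a coherent sheaf.} Since $K$ is a compact, totally real, Lipschitz set it admits a basis of Stein neighbourhoods, so after shrinking we may assume $U$ Stein and contractible with $K$ relatively compact in $U$. The crucial ingredient — and the step I expect to be the main obstacle — is a vanishing of local cohomology along $K$, namely $\mathcal H^i_K(\mathcal O_U)=0$ for $i<\dim X$. For a \emph{smooth} totally real $M$ this can be obtained by flattening $M$ in local holomorphic coordinates to a linear totally real subspace $\R^k\subset\C^k\times\C^{n-k}$, for which $\mathcal H^\bullet_{\R^k\times\{0\}}(\mathcal O)$ is concentrated in degree $n$ (a Sato-type computation reducing to the classical cases $\R^n\subset\C^n$ and $\{0\}\subset\C^{n-k}$). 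In the merely Lipschitz case I would reach it by an approximation argument, or by estimates on the annular sets $\{\,t<\rho<c\,\}$ as $t\downarrow 0$ for a non-negative, Lipschitz, strictly plurisubharmonic-off-$K$ function $\rho$ with $\rho^{-1}(0)\supset K$ — which is exactly where Andreotti--Grauert-type bumping and $L^2$-estimates across a critical locus are needed. Granting the vanishing and using $\dim X\ge 3$ to get $\mathcal H^i_K(\mathcal O_U)=0$ for $i=0,1,2$, one obtains in particular the Hartogs phenomenon for holomorphic functions across $K$ and $H^i(U\setminus K,\mathcal O)=0$ for $1\le i\le\dim X-2$; fed into the standard coherent-extension machinery (Scheja-type removability for cohomology classes together with the extension theorems of Trautmann and Siu, whose hypotheses here play the role of ``$K$ has codimension $\ge 3$''), this yields a coherent analytic sheaf $\mathcal F$ on $U$ with $\mathcal F|_{U\setminus K}$ isomorphic to the sheaf of holomorphic sections of $E$.

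\emph{Locating the exceptional points.} The set $Q$ of points of $U$ at which $\mathcal F$ is not locally free is a closed analytic subset of $U$, and it is contained in $K$ because $\mathcal F$ is locally free away from $K$. Since $K$ is totally real it contains no germ of a positive-dimensional complex-analytic set, so $Q$ is $0$-dimensional, hence discrete in $U$ and finite. On $U\setminus Q$ the sheaf $\mathcal F$ is the sheaf of sections of a holomorphic vector bundle which agrees with $E$ on the dense connected open set $U\setminus K$, and this is the desired local extension; patching as above completes the proof. I note that $\dim X\ge 3$ enters twice essentially — to keep $U\setminus M$ connected (so the local extensions are unique and glue) and to make the degree range in $\mathcal H^i_K(\mathcal O)=0$, $i\le 2$, non-empty (so the coherent extension exists) — and that the exceptional set is genuinely unavoidable: already for $M$ a single point, the pull-back to $\C^n\setminus\{0\}$ under $z\mapsto[z]$ of the tangent bundle of $\mathbb{P}^{n-1}$ is a non-trivial holomorphic vector bundle that does not extend across the origin.
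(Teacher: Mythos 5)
Your overall architecture (localise along $M$, extend $E$ across the totally real set as a coherent sheaf, note that the non-locally-free locus is an analytic subset of a totally real set and hence discrete, then glue local extensions by uniqueness) matches the paper's, and your last step is literally the argument used here. The genuine gap is the middle step, which you yourself flag as ``the main obstacle'' and then do not carry out. Two things go wrong. First, the local cohomology vanishing $\mathcal{H}^i_K(\mathcal{O}_U)=0$ for $i<\dim X$ is only sketched in the smooth case (and even there one cannot holomorphically flatten a totally real submanifold onto a linear subspace; a Harvey--Wells type tangency argument is needed), while for the merely Lipschitz case you offer only ``an approximation argument'' or unspecified $L^2$-estimates. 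Second, and more seriously: even granting $H^i(U\setminus K,\mathcal{O})=0$ for $1\le i\le \dim X-2$, there is no ``standard coherent-extension machinery'' that converts vanishing for the structure sheaf into a coherent extension of an arbitrary vector bundle across a non-analytic compact set $K$. The Trautmann/Siu/Scheja theorems you invoke either require the exceptional set to be analytic (with codimension conditions measured against the homological codimension of the sheaf) or are formulated for concrete Hartogs figures and ring domains. Cohomology of $\mathcal{O}$ does not control the nonabelian obstruction to extending $E$ itself; indeed, if such off-the-shelf machinery existed, Siu's 1971 claim (Theorem \ref{thm1}) would not have remained without a published proof, which is the raison d'\^etre of this paper.

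What the paper does at exactly this point is geometric rather than cohomological: polynomial convexity of Lip-$\alpha$ graphs with $\alpha<1$ (Stout) together with an Anders\'en--Lempert argument produces a \emph{dense} set of points of $M$ at which a $1$-Hartogs figure can be attached from the complement (Proposition \ref{HPC}, Corollary \ref{densehpc}); Siu's concrete polydisc theorem then extends $E$ to a coherent sheaf on the filled-in polydisc, locally free off finitely many points; and a maximality argument (the relatively open set $\Omega\subset M$ near which $E$ trivialises has complement $M\setminus\Omega$ which is again a totally real Lipschitz set, so the argument repeats) propagates the extension along all of $M$, with transition maps handled by Corollary \ref{lipremovable}. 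To salvage your route you would have to both prove the Lipschitz local-cohomology vanishing and supply an actual extension theorem for bundles across such $K$ --- which in practice means reproving the Hartogs-figure reduction you are trying to avoid.
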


In view of Theorem \ref{thm3} we observe that it follows from the proof of Theorem \ref{thm1} when $X$ is a complex manifold, $M\subset X$ is a closed totally real Lipschitz set, and if $X\setminus M$ is elliptic, then $M$ is a discrete set of points.  

The paper is organised as follows. In Section \ref{concave} we present some results concerning the density of pseudoconcave points and the extension of mappings. Next, in Section \ref{fibrebundles}, we consider extensions of sections and maps of fibre bundles, which -- among other methods -- are used to show the uniqueness of the extensions of bundles.
In Section \ref{smoothlevelsets} we show how to extend bundles across non-critical level sets, and in Section \ref{totallyrealsets} we extend bundles across totally real Lipschitz sets, proving Theorem \ref{thm2}. In Section \ref{modification} we show how to modify exhaustion functions near critical points and we give the proof of Theorem \ref{thm1}. Finally, in section \ref{sec-oka}, we explain the application to Oka theory and prove Theorem \ref{thm2}.

\section{Pseudoconcave points of compact sets and extension of mappings}\label{concave}

We denote the coordinates on $\C^n$ by $z_1, \dots, z_n$ with $z_j = x_j + i y_j$, and we let $x$ denote the tuple $(x_1, \dots, x_n)$ and $y$ denote the tuple $(y_1, \dots, y_n)$. 
By a Lip-$\alpha$-graph $M$ at the origin we mean a set $M = \{ y=\psi(x) \}$ with $\psi(0) = 0$ and $\psi$ Lipschitz continuous with Lipschitz constant $\alpha>0$.

\begin{definition}
For $0 \leq r < s$ define according to Siu \cite{Siu}*{chap 2. sec. 3}
\begin{align*}
G^n(r,s) &:= \left\{ (z_1, \dots, z_n) \in (s \cdot \triangle)^n \,:\, |z_k| > r \text{ for some } k \in \{1, \dots, n\} \right\} \\
&= s \cdot \triangle^n \setminus r \cdot \overline{\triangle}^n.
\end{align*}

For $q = 1, 2, \dots, n-1$ we define a \emph{standard $q$-Hartogs figure} $\hartogs{q}$ by
\[
\hartogs{q} := (\triangle^{q}\times G^{n-q}(1/2,1)) \cup (1/2 \cdot \triangle^{q}\times \triangle^{n-q}),
\] 
and by a \emph{$q$-Hartogs figure}, with some abuse of notations also denoted by $\hartogs{q}$, we mean the biholomorphic image $\phi(\hartogs{q})$ of a standard $q$-Hartogs figure under an injective holomorphic map $\phi \colon \triangle^n \to X$ into a complex manifold $X$.
We set $\hartogsh{q} := \phi(\triangle^n)$. If not stated explicitly otherwise, we always assume $n = \dim X$.
\end{definition}

\begin{definition}
Let $K \subset \C^n$ be a compact set.
We call a point $x \in K$ \emph{strictly $q$-pseudoconcave} if for any open neighborhood $U$ of $x$, there exists a $q$-Hartogs figure $\hartogs{q} \subset U \setminus K$ with $x \in \hartogsh{q}$.
\end{definition}

We will use the following notation: For any set $K \subset \C^k \times \C^m$ and $z_0 \in \C^k$, we denote
\[
K_{z_0} := K \cap (\{z_0\} \times \C^m)
.\]
Note that if $x \in K_{z_0}$ is a strictly $q$-pseudoconcave point for $K_{z_0}$ in $\mathbb C^m$, then $x$ is strictly $(q+k)$-pseudoconcave for $K$, since $q$-Hartogs figures in $\{z_0\} \times \C^m$ can be fattened up slightly.  

\begin{proposition}\label{HPC}
Let $K \subset \C^n$ be a compact set.   
Then the strictly $q$-pseudoconcave points in $K$ are dense in the Shilov boundary $\check{S}(P(K))$ of the algebra $P(K)$ for $q = 1, 2, \dots, n-1$.
\end{proposition}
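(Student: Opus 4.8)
\emph{Sketch of a proof.} Write $A:=P(K)$. We shall freely use that $\check S(A)$ is \emph{a} closed boundary for $A$ (so $|h|$ attains $\sup_K|h|$ on $\check S(A)$ for every $h\in A$), that $\check S(A)\subseteq K$ because $K$ is itself a closed boundary, and that by Bishop's theorem — valid since $A$ is a uniform algebra on the metrizable space $K$ — the peak points of $A$ are dense in $\check S(A)$. The plan is: first reduce to $q=1$ by slicing; then localise near a peak point; then write down an explicit $1$-Hartogs figure. For the reduction, assume the statement for $q=1$ in every dimension $\ge 2$ and let $2\le q\le n-1$. Given $x_0\in\check S(A)$ and open $U\ni x_0$, write $\C^n=\C^{q-1}\times\C^{n-q+1}$ with coordinates $(z,w)$. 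By the definition of the Shilov boundary and polynomial approximation there is $f\in A$ with $\|f\|_K=1$ and $\{|f|=1\}\cap K\subseteq U$; pick $p=(z_0,w_0)\in\{|f|=1\}\cap K$. Then $f(z_0,\cdot)$ lies in $P(K_{z_0})$, has supremum norm $1$ on $K_{z_0}$, and its maximum–modulus set on $K_{z_0}$ is contained in the open set $U_{z_0}:=\{w:(z_0,w)\in U\}$. Since that set must meet $\check S(P(K_{z_0}))$, the $q=1$ case in dimension $n-q+1\ge 2$ furnishes a strictly $1$-pseudoconcave point $w_2$ of $K_{z_0}$ with $(z_0,w_2)\in U$, and by the remark preceding the proposition $(z_0,w_2)$ is strictly $q$-pseudoconcave for $K$.

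\emph{Localisation when $q=1$.} Fix $x_0\in\check S(A)$ and open $U\ni x_0$; by Bishop's theorem there is a peak point $p\in\check S(A)\cap U$, with peak function $f\in A$ (so $f(p)=1=\|f\|_K$ and $|f|<1$ on $K\setminus\{p\}$). We claim $p$ is strictly $1$-pseudoconcave, which will finish the proof. Since $f$ peaks at $p$, the sets $\{z\in K:|1-f(z)|<\tau\}$ form a neighbourhood basis of $p$ in $K$. Fix a neighbourhood $U'$ of $p$. Choose a polynomial $\tilde f$ with $\|\tilde f-f\|_K$ small and, after an arbitrarily small linear perturbation, with $d\tilde f(p)\neq 0$, and put $g:=1-\tilde f+\|\tilde f-f\|_K$. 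Then $g$ is holomorphic near $p$, $\operatorname{Re}g\ge 0$ on $K$ (so $\{\operatorname{Re}g<0\}\cap K=\emptyset$), $|g(p)|\le 2\|\tilde f-f\|_K$, and $dg(p)\neq 0$, so $u:=g$ completes to a holomorphic coordinate system $(u,v_2,\dots,v_n)$ near $p$ with $v(p)=0$. The input we use is: (a) $K$ avoids $\{\operatorname{Re}u<0\}$; (b) $K\cap\{|u|<\tau\}$ is contained in any prescribed neighbourhood of $p$ once $\tau$ is small; and (c) $|u(p)|$ can be made small relative to any prescribed quantity.

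\emph{The Hartogs figure.} In the coordinates $(u,v_2,\dots,v_n)$ define, for small $\delta,\rho>0$,
\[
\phi(\zeta_1,\zeta_2,\dots,\zeta_n):=\Bigl(\,u(p)+\delta\bigl(\zeta_1-\tfrac34\bigr)\,,\ \rho\,\zeta_2,\ \dots,\ \rho\,\zeta_n\,\Bigr),
\]
an injective holomorphic map of $\triangle^n$ onto a small neighbourhood of $p$, which we arrange to lie inside $U'$ by choosing $\rho$, then $\delta$, then $\|\tilde f-f\|_K$, small in that order. Its value at $\zeta^{\ast}:=(\tfrac34,0,\dots,0)\in\triangle^n\setminus\hartogs{1}$ is $p$, so $p\in\hartogsh{1}$. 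It remains to see $\phi(\hartogs{1})\cap K=\emptyset$. On the core $\tfrac12\cdot\triangle\times\triangle^{n-1}$ one has $\operatorname{Re}u<\operatorname{Re}u(p)+\delta(\tfrac12-\tfrac34)=\operatorname{Re}u(p)-\tfrac14\delta<0$ as soon as $|g(p)|<\tfrac14\delta$, so by (a) these points avoid $K$. On the remaining piece $\triangle\times G^{n-1}(1/2,1)$ one has $|u|<2\delta$ and $\tfrac{\rho}{2}<\max_{k\ge 2}|v_k|<\rho$; by (b) and (c), for $\delta$ small (given $\rho$) the set $K\cap\{|u|<2\delta\}$ lies so close to $p$ that it misses the shell $\{\tfrac{\rho}{2}\le\max_{k\ge 2}|v_k|\le\rho\}$, so these points too avoid $K$. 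Since $U'$ was arbitrary, $p$ is strictly $1$-pseudoconcave, and it lies in $U$.

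\emph{The main obstacle.} The hard part is this last construction, together with the point it makes transparent: the weaker one-sided information $K\subseteq\{\operatorname{Re}g\ge 0\}$ is \emph{not} enough — a short maximum-principle argument shows that the complement of an exact half-space $\{\operatorname{Re}z_1<0\}$ contains no $1$-Hartogs figure whose filled polydisc touches the boundary, which fits with the fact that boundary points of Levi-flat hypersurfaces are not in the Shilov boundary. What rescues the construction is the genuine \emph{peak} property, which forces $K$ to be thin along the leaf $\{g=0\}$ and lets the bulk of the Hartogs figure wrap around through the region where $K$ is absent. The remaining items, routine but requiring care, are: the approximation-and-perturbation step producing $dg(p)\neq 0$ and the shrinking property (b); the uniform comparison between $\max_{k\ge 2}|v_k|$ and distance to $p$ underlying the ordering of the parameter choices; and, as an alternative to the slicing reduction, building a $q$-Hartogs figure directly by the same scheme with $\zeta_1$ in the transversal role and $\zeta_2,\dots,\zeta_n$ in the tangential one, at the price of heavier bookkeeping.
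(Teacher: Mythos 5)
Your first steps (passing to peak points via their density in the Shilov boundary, and the slicing reduction) are consistent with the paper, though the slicing is superfluous: since $\hartogs{q}\subseteq\hartogs{1}$ as subsets of $\triangle^n$, any strictly $1$-pseudoconcave point is automatically strictly $q$-pseudoconcave for all $q$. The genuine gap is in the Hartogs-figure construction at the peak point. Your transversal function is $u=g=1-\tilde f+\eta$ with $\eta=\|\tilde f-f\|_K$, and two of your requirements are incompatible in general. On one hand $|u(p)|$ can be as large as $2\eta$, and for the core $\tfrac12\triangle\times\triangle^{n-1}$ to land in $\{\mathrm{Re}\,u<0\}$ you need $\delta>8\eta$. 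On the other hand, for $\phi(\triangle^n)$ to lie in $U'=B_{r_0}(p)$ the $u$-disc of radius $2\delta$ around $u(p)$ must sit inside $u\bigl(B_{r_0}(p)\cap\{v=0\}\bigr)$, which forces $\delta\leq C\,|dg(p)|\,r_0$. Uniform approximation on $K$ gives no lower bound on $|d\tilde f(p)|$: if the approximants happen to have vanishing (or tiny) differential at $p$, your ``arbitrarily small linear perturbation'' yields $|dg(p)|$ of order at most $\eta$ (a linear perturbation of slope $c$ costs about $c\cdot\mathrm{diam}(K)$ in the sup norm on $K$), and then $8\eta<\delta\leq C\eta\,r_0$ is impossible once $r_0$ is small. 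The same defect appears as a circularity in your stated order of choices: $\rho$ is a radius in coordinates that do not exist until $\tilde f$ is chosen, and the constant $\sigma$ measuring how fast $K\cap\{|1-f|<\sigma\}$ shrinks into $\{\max_k|v_k|<\rho/2\}$ depends on those coordinates, hence on $\tilde f$, hence on $\eta$; so ``choose $\|\tilde f-f\|_K$ last'' does not close the loop.

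Note also that you are attempting something strictly stronger than the proposition, namely that the peak point itself is strictly $1$-pseudoconcave; the paper neither needs nor proves this. Its route is different and sidesteps your difficulty entirely: since a peak point $x$ is not in the polynomially convex hull of $K\setminus B_\epsilon(x)$, the Anders\'en--Lempert theorem provides $\Lambda\in\Aut_{\mathrm{hol}}(\C^n)$ with $\|\Lambda-\id\|_{K\setminus B_\epsilon(x)}<\delta$ and $\|\Lambda(x)\|>R$; for $R$ large and $\delta$ small the norm maximiser on $\Lambda(K)$ is $\Lambda(x')$ for some $x'\in K\cap B_\epsilon(x)$, and at a global extreme point the transversal function is a linear coordinate (with derivative $1$ and exact peak value), so a Hartogs figure of any prescribed smallness exists with no degeneration. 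To repair your argument you would need a genuinely holomorphic $g$ with $\mathrm{Re}\,g\geq0$ on $K$, $g(p)=0$ exactly, and $dg(p)\neq0$ --- not an approximate one --- or else settle, as the paper does, for a strictly pseudoconcave point merely near $p$, which is all the density statement requires.
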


\begin{proof}
Note first that a global extreme point $ x\in K$ is $q$-pseudoconcave for 
$0<q<n$: 
Let $x \in K$ be such that $\|x\|= \sup\limits_{z\in K} \| z \|$. After a unitary transformation we may assume that $x = e_1= (1,0,\dots,0)$. Then clearly $(\{e_1\} \times \{\C^{n-1}\}) \cap K = \{e_1\}$, so it is easy to construct $q$-Hartogs figures at $x$.

\medskip
We proceed to show that if $x \in \check{S}(P(K))$ and if $\epsilon > 0$, then there exists a point $x' \in K$ with $\|x'-x\| < \epsilon$ and a holomorphic automorphism $\alpha \in \Aut_{\mathrm{hol}}(\C^n)$ such that $\alpha(x')$ is a global extreme point for the set $\alpha(K)$.

The peak points for $P(K)$ are dense in $\check{S}(P(K))$, so we may assume that the point $x$ is a peak point for the algebra $P(K)$.  
Then $x$ is not in the polynomially convex hull of the set $K' := K \setminus B_\epsilon(x)$. Hence, for any $R>0$ and any $\delta>0$, 
by the Anders\'{e}n--Lempert Theorem \citelist{\cite{ForstnericRosay} \cite{Andersen} \cite{AndersenLempert}} there exists $\Lambda \in \Aut_{\mathrm{hol}}(\C^n)$ such that $\|\Lambda - \id\|_{K\setminus B_\epsilon(x)} < \delta$ and such that $\Lambda(x) > R$. 
If $R$ is chosen large and $\delta$ is chosen small, this proves the proposition.  
\end{proof}

\begin{corollary}\label{densehpc}
Let $M \subseteq \C^n$ be a closed subset of a closed Lip-$\alpha$ graph near the origin with $0<\alpha<1$.  
Then there exists a dense set of points $\Sigma\subset M$ such that for any point $x\in\Sigma$ and any open set $U$ containing $x$, there exists an embedding $\phi \colon \triangle^n \to U$ containing $x$, with $\phi(\hartogs{1})\cap M = \emptyset$.
\end{corollary}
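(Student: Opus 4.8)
The plan is to reduce everything to Proposition \ref{HPC} applied with $q=1$. First note that it suffices to show that the strictly $1$-pseudoconcave points of $M$ form a dense subset $\Sigma\subseteq M$. Indeed, if $x\in M$ is strictly $1$-pseudoconcave and $U\ni x$ is given, then applying the definition to a smaller neighbourhood produces an injective holomorphic $\phi\colon\triangle^n\to\C^n$ with $\phi(\hartogs{1})\subseteq U\setminus M$ and $x\in\phi(\triangle^n)$; since the Hartogs figures furnished at pseudoconcave points in the proof of Proposition \ref{HPC} can be taken arbitrarily small, we may in addition arrange $\phi(\triangle^n)\subseteq U$, and then $\phi$ is exactly the required embedding. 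So it remains to prove that $\Sigma$ is dense in $M$.

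Fix $p\in M$ and $\epsilon>0$, and set $K:=M\cap\overline{B_\epsilon(p)}$, a compact subset of the Lip-$\alpha$ graph $\{y=\psi(x)\}$ which contains $p$. The crucial point is that $p$ is a \emph{peak point} for the algebra $P(K)$, and therefore lies in $\check S(P(K))$. To see this, consider the entire function
\[
g(z):=\exp\Bigl(-\sum_{j=1}^{n}(z_j-p_j)^2\Bigr)\in P(K).
\]
Writing $p=(a,\psi(a))$ with $a\in\R^n$ and $w=(x,\psi(x))$ for a point of $K$, one computes
\[
\operatorname{Re}\sum_{j=1}^{n}(w_j-p_j)^2=\|x-a\|^2-\|\psi(x)-\psi(a)\|^2\ \geq\ (1-\alpha^2)\,\|x-a\|^2 ,
\]
which is strictly positive whenever $w\neq p$. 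Hence $|g(p)|=1$ while $|g|<1$ on $K\setminus\{p\}$, so $g$ peaks at $p$. It is precisely here — in the inequality $\|\psi(x)-\psi(a)\|\leq\alpha\|x-a\|$ with $\alpha<1$ — that the hypothesis on the Lipschitz constant is used: it makes the graph ``totally real enough'' for the quadratic $\sum_j(z_j-p_j)^2$ to serve as a peaking weight.

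Now Proposition \ref{HPC} (applied with $q=1$) produces a strictly $1$-pseudoconcave point $x'$ of $K$ with $\|x'-p\|<\epsilon$, and we may take $x'$ inside the open ball $B_\epsilon(p)$. Since then $M$ and $K$ agree on a neighbourhood of $x'$, the point $x'$ is in fact strictly $1$-pseudoconcave for $M$: given an open $U\ni x'$, choose $\delta>0$ with $B_\delta(x')\subseteq B_\epsilon(p)$ and apply strict $1$-pseudoconcavity of $x'$ for $K$ to the neighbourhood $U\cap B_\delta(x')$; the resulting $1$-Hartogs figure lies in $(U\cap B_\delta(x'))\setminus K=(U\cap B_\delta(x'))\setminus M\subseteq U\setminus M$, with $x'$ in its filling. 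Thus $x'\in\Sigma$ and $\|x'-p\|<\epsilon$, so $\Sigma$ is dense in $M$. I expect the only delicate point to be this last transfer from the local piece $K$ back to all of $M$, together with the bookkeeping needed to keep the Hartogs figures inside the prescribed neighbourhood; the genuine analytic content — density of pseudoconcave points in the Shilov boundary — is entirely imported from Proposition \ref{HPC}, and the role of the hypothesis $\alpha<1$ is confined to the elementary estimate for $g$ above.
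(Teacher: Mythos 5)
Your argument is correct and ultimately rests on the same mechanism as the paper's proof, namely Proposition \ref{HPC}: once the points of $M$ are known to lie in the Shilov boundary of the relevant polynomial algebra, the density of strictly $1$-pseudoconcave points does the rest. The difference is in how that Shilov-boundary membership is obtained. The paper cites Stout's result that Lip-$\alpha$ graphs with $\alpha<1$ are polynomially convex and then asserts $M=\check{S}(P(M))$; you instead localize to the compact piece $K=M\cap\overline{B_\epsilon(p)}$ and exhibit the explicit peak function $\exp\bigl(-\sum_j(z_j-p_j)^2\bigr)$, showing directly that every point of $K$ is a peak point and hence lies in $\check{S}(P(K))$. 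This is more self-contained, makes the role of the hypothesis $\alpha<1$ completely transparent (it is exactly what forces $\mathrm{Re}\sum_j(w_j-p_j)^2>0$ away from the peak point), avoids any appeal to polynomial convexity --- which Proposition \ref{HPC} never requires --- and the localization also covers the case where $M$ is closed but not compact. The two delicate points you flag are handled correctly: the transfer of pseudoconcavity from $K$ back to $M$ works because $K$ and $M$ agree on $B_\delta(x')\subset B_\epsilon(p)$, and the requirement that the whole image $\phi(\triangle^n)$, not merely $\phi(\hartogs{1})$, lie in $U$ is legitimate since the Hartogs figures constructed at a global extreme point in the proof of Proposition \ref{HPC} can be taken arbitrarily small and are then pulled back by a fixed automorphism; note that the paper's own two-line proof implicitly relies on this same refinement.
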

\begin{proof}
Lip-$\alpha$ graphs over $\mathbb R^n$ are polynomially convex if $\alpha<1$ \cite{Stout}*{Cor. 1.6.11, p. 56}. Furthermore $M=\check{S}(P(M))$.
\end{proof}

\begin{corollary}\label{lipremovable}
Let $X$ be a complex manifold with $\dim X \geq 2$ and let $M \subset X$ be a closed totally real Lipschitz submanifold.
Furthermore let $Z$ be a complex manifold, let $Y$ be a Stein manifold and let $f \colon (X\setminus M)\times Z \to Y$ be a holomorphic 
map.
Then $f$ extends holomorphically across $M \times Z$.  
\end{corollary}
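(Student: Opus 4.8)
The plan is to reduce the statement to the Hartogs-type extension of holomorphic \emph{functions}, and then to obtain this by repeatedly filling in over Hartogs figures placed at pseudoconcave points. The conclusion is local along $M\times Z$ and invariant under biholomorphisms, so I first fix a holomorphic chart of $X$ in which a relatively compact piece of $M$ is contained in a closed Lip-$\alpha$ graph over $\R^n$ with $0<\alpha<1$; by the facts recalled in the proof of Corollary \ref{densehpc}, every closed subset of such a graph is polynomially convex and equals its own Shilov boundary. Since $Y$ is Stein it embeds as a closed complex submanifold of some $\C^N$, so $f$ is given by $N$ holomorphic functions on $(X\setminus M)\times Z$, and it suffices to extend each of them holomorphically across $M\times Z$: a holomorphic extension $(\tilde g_1,\dots,\tilde g_N)$ to a neighbourhood $\Omega$ of $M\times Z$ is continuous, and since $M$ has real codimension at least two the set $(X\setminus M)\times Z$ is dense in $X\times Z$, so the extension maps $\Omega$ into the closure of the image of $f$, which is contained in $Y$ because $Y$ is closed in $\C^N$; composing with a local holomorphic retraction onto $Y$ shows the extension is holomorphic as a map into $Y$.

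If $\dim X\geq 3$ one is already finished, since $M\times Z$ is a Lipschitz submanifold of real dimension at most $\dim X+2\dim Z$, which is strictly less than $2\dim(X\times Z)-2$, hence of locally vanishing $(2\dim(X\times Z)-2)$-dimensional Hausdorff measure, and holomorphic functions extend across closed sets of that size. The substantive case is $\dim X=2$, which I would handle by a transfinite construction. Let $g$ be holomorphic on $(X\setminus M)\times Z$. I build an increasing family of open sets $W_\alpha=V_\alpha\times Z\subseteq X\times Z$, over which $g$ extends holomorphically, with $W_0=(X\setminus M)\times Z$ and $W_\lambda=\bigcup_{\alpha<\lambda}W_\alpha$ at limit stages, where the extensions already constructed agree on overlaps and so define an extension over $W_\lambda$. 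At a successor stage write the complement of $W_\alpha$ as $D_\alpha\times Z$ with $D_\alpha\subseteq M$ closed; if $D_\alpha\neq\emptyset$, apply Corollary \ref{densehpc} to the closed subset $D_\alpha$ of the Lipschitz graph to obtain a set $\Sigma_\alpha$ dense in $D_\alpha$ such that every $x\in\Sigma_\alpha$ admits a $1$-Hartogs figure $\phi(\hartogs{1})$ with $\phi(\hartogs{1})\cap D_\alpha=\emptyset$ and $x\in\phi(\triangle^n)$. Then $\phi(\hartogs{1})\times Z\subseteq W_\alpha$, and the Hartogs extension across $\phi(\hartogs{1})$, with $Z$ as a holomorphic parameter, extends $g$ holomorphically to $\phi(\triangle^n)\times Z$. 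All these extensions coincide with $g$ on the dense connected set $(X\setminus M)\times Z$, hence patch to a holomorphic extension of $g$ over $W_{\alpha+1}:=W_\alpha\cup\bigcup_{x\in\Sigma_\alpha}\phi(\triangle^n)\times Z$. Since $\Sigma_\alpha\subseteq V_{\alpha+1}$ is dense in $D_\alpha$, the set $D_{\alpha+1}$ is a proper subset of $D_\alpha$, so the open sets $W_\alpha$ strictly increase as long as $D_\alpha\neq\emptyset$; as $X\times Z$ is second countable, such a chain has countable length, so $D_\gamma=\emptyset$ for some $\gamma$, i.e.\ $g$ extends holomorphically to all of $X\times Z$.

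The main obstacle is exactly this mop-up: Corollary \ref{densehpc} produces pseudoconcave points only on a \emph{dense} subset of $M$, and then only on a dense subset of the residual bad set $D_\alpha$, so one cannot finish in a single step and is forced to iterate. The points that need care are that each $D_\alpha$ is again a closed subset of the Lipschitz graph (so that Corollary \ref{densehpc} applies to it verbatim), that the many overlapping local extensions really patch — which rests on the connectedness and density of $X\setminus M$ together with the identity theorem — and that the iteration terminates, which is the transfinite (Cantor--Bendixson-style) bookkeeping above. Throughout, exactly as in Theorem \ref{thm3}, the factor $Z$ plays no role: every neighbourhood produced has the product form $V_\alpha\times Z$, so the argument takes place in $X$ with $Z$ carried along as a holomorphic parameter.
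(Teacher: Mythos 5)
Your proof is correct and its engine is the same as the paper's: reduce to scalar functions via a Stein embedding of $Y$, use Corollary \ref{densehpc} to place a $1$-Hartogs figure in the complement of the residual bad set, fill in with the parametric Hartogs lemma (Lemma \ref{localextension}) and the identity principle (the two extensions agree first on $\phi(\hartogs{1})\times Z$ and only then, by connectedness and density of the complement of the graph, on the whole overlap), and iterate on the closed remainder, which is again a closed subset of the Lip-$\alpha$ graph. The only organizational difference is that the paper replaces your transfinite Cantor--Bendixson chain by the equivalent one-step maximality argument: it takes $\Omega\subset M$ to be the largest relatively open subset across which $f$ extends and derives a contradiction from Corollary \ref{densehpc} applied to the closed set $M\setminus\Omega$, which avoids the (correct but unnecessary) second-countability bookkeeping. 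Your shortcut for $\dim X\geq 3$ via removability of closed sets of vanishing $(2N-2)$-dimensional Hausdorff measure is a legitimate alternative for that subcase, but it imports an external theorem the paper does not invoke and is not needed, since the Hartogs-figure argument covers all $\dim X\geq 2$ uniformly.
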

\begin{proof}
The problem is local, so let $W$ be an open connected subset of the origin, assume that $M$ is a closed subset of $W$, and let $f\in\mathcal O((W\setminus M)\times Z)$.
Since $Y$ is Stein, it is enough to consider functions.  
Note that $W\setminus M$ is connected.   
By the previous corollary there exists a point $x\in M$ and an embedding $\phi \colon \triangle^n \to W$ with $x\in\phi(\triangle^n)$ and $\phi(\hartogs{1})\cap M=\emptyset$.
Note that $\phi(\triangle^n)\setminus M$ is connected.    Let $\tilde f$ be the extension of $f|_{\phi(\hartogs{1})\times Z}$ to $\phi(\triangle)\times Z$ according to Lemma \ref{localextension}.
Now $f$ and $\tilde f$ are both holomorphic on the connected open set $\phi(\triangle^n)\setminus M$ and they coincide on the open set $\phi(\hartogs{1})$.
By the identity principle they coincide on $\phi(\triangle)\setminus M$, and then clearly $\tilde f$ extends $f$ across $(M\cap\phi(\triangle^n))\times Z$.

Now let $\Omega\subset M$ be the largest open set (in the relative topology) such that $f$ extends across $\Omega\times Z$. This is well defined since any extension across a point of $M$ is unique if it exists, and it is non-empty by the argument above.  
Note that $M\setminus\Omega$ is either empty or it is a closed subset of a Lip-$\alpha$-graph, and in the latter case we would get a contradiction to the assumption that $\Omega$ is maximal, since, due to Corollary \ref{densehpc}, we could apply the above argument again. Hence, $M = \Omega$.
\end{proof}

\begin{lemma}\label{localextension}
Let $Z$ be a complex manifold and let $f \colon \hartogs{q}\times Z \to \mathbb C$ be a holomorphic function. Then $f$ extends to a holomorphic 
function $\tilde f \colon \triangle^n\times Z \to \C$.
\end{lemma}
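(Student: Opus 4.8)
The plan is to reduce to the standard model and then write down an explicit Cauchy extension in the last variable, carrying the points of $Z$ along as holomorphic parameters. Precomposing $f$ with $\phi\times\id_Z$, where $\phi\colon\triangle^n\to X$ is the embedding defining the figure, we may assume that $\hartogs{q}\subset\triangle^n$ is the \emph{standard} $q$-Hartogs figure and that the task is to extend across $\triangle^n\times Z$; the general case follows by transporting the extension back through $\phi^{-1}\times\id_Z$ to $\hartogsh{q}\times Z$. Write $z=(\hat z,z_n)$ with $\hat z=(z_1,\dots,z_{n-1})\in\triangle^{n-1}$, and recall that $n-q\ge 1$, so $z_n$ is among the last $n-q$ coordinates. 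The observation driving the proof is that for every $\hat z\in\triangle^{n-1}$ and every $\rho$ with $1/2<\rho<1$ the circle $\{(\hat z,\zeta):|\zeta|=\rho\}$ lies in $\hartogs{q}$: the coordinate $\zeta$ has modulus $>1/2$, so the point sits in $\triangle^q\times G^{n-q}(1/2,1)$.

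For $1/2<\rho<1$ I would set
\[
\tilde f_\rho(\hat z,z_n,t):=\frac{1}{2\pi i}\int_{|\zeta|=\rho}\frac{f(\hat z,\zeta,t)}{\zeta-z_n}\,d\zeta ,
\]
which by the observation above is defined for $(\hat z,z_n,t)\in\triangle^{n-1}\times\rho\triangle\times Z$. The integrand is holomorphic in $(\hat z,z_n,t)$ for each fixed $\zeta$ and jointly continuous, so $\tilde f_\rho$ is holomorphic; holomorphy in the parameter $t$ is the one place where $Z$ enters, but it is local on $Z$ and so reduces to differentiation under the integral sign. For $1/2<\rho<\rho'<1$ the slice $\zeta\mapsto f(\hat z,\zeta,t)$ is holomorphic on the annulus $1/2<|\zeta|<1$ for every fixed $\hat z$ and $t$, so Cauchy's integral theorem gives $\tilde f_\rho=\tilde f_{\rho'}$ wherever both are defined. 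Letting $\rho\uparrow 1$, the functions $\tilde f_\rho$ patch together to a holomorphic function $\tilde f$ on $\triangle^{n-1}\times\triangle\times Z=\triangle^n\times Z$.

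It remains to identify $\tilde f$ with $f$ on $\hartogs{q}\times Z$. On the core piece $\tfrac{1}{2}\triangle^q\times\triangle^{n-q}\times Z$ the slice $\zeta\mapsto f(\hat z,\zeta,t)$ is holomorphic on all of $\triangle$, so the Cauchy integral formula yields $\tilde f_\rho(\hat z,z_n,t)=f(\hat z,z_n,t)$ for $|z_n|<\rho$, hence $\tilde f=f$ there. Now fix $t\in Z$; the holomorphic function $\tilde f(\cdot,t)-f(\cdot,t)$ on $\hartogs{q}$ — which is connected because its two defining pieces overlap in the nonempty set $\tfrac{1}{2}\triangle^q\times G^{n-q}(1/2,1)$ — vanishes on the nonempty open core, hence vanishes identically by the identity principle. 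As $t$ was arbitrary, $\tilde f=f$ on $\hartogs{q}\times Z$. (Uniqueness of $\tilde f$, though not required, likewise follows from the identity principle, since $\triangle^n$ is connected and $\hartogs{q}$ is open and nonempty.)

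I expect the only point needing any care to be the joint holomorphy of the Cauchy integral, and specifically its holomorphic dependence on $t$: since $Z$ is an arbitrary complex manifold — possibly non-Stein, possibly disconnected — this cannot be phrased via global function theory, but it is harmless because holomorphy is local, so one passes to a chart in which $Z$ is a domain in some $\C^m$ and differentiates under the integral sign. Everything else is the classical proof of Hartogs' extension with the parameter simply carried through, and the topological input — connectedness of $\hartogs{q}$ — is immediate from the definition.
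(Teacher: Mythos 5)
Your proof is correct. It follows the same classical route as the paper --- Hartogs extension in the $\triangle^n$-slices with the points of $Z$ carried along as parameters --- but the two arguments secure joint holomorphy in the parameter differently. The paper extends $f(\cdot,z)$ for each fixed $z\in Z$ by quoting the classical Hartogs theorem as a black box, then works in a chart on $Z$, applies Hartogs extension again to the partial derivatives in the $Z$-directions to conclude that $\tilde f$ is holomorphic in each variable separately, and finally invokes Hartogs' theorem on separately holomorphic functions. You instead write the extension explicitly as a Cauchy integral over the circle $|\zeta|=\rho$ in the last coordinate; this yields joint holomorphy in all variables (including the chart variables on $Z$) in one stroke by differentiation under the integral sign, at the price of verifying by hand that the circle lies in $\hartogs{q}$, that the integral is independent of $\rho$, and that it agrees with $f$ on the connected open set $\hartogs{q}$ --- all of which you do correctly. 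Your version is more self-contained, avoiding the separate-analyticity theorem and filling in the step the paper dismisses with ``it is easy to see''; the paper's version is shorter. Both are complete proofs of the lemma.
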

\begin{proof}
Let $(w,z)$ denote the coordinates on $\triangle^n\times Z$. By the classical Hartogs extension theorem, for each $z\in Z$ the map $f(\cdot,z)$ extends uniquely to a holomorphic function $\tilde f_z \colon \triangle^n \to \C$, and we let these extensions determine an extension $\tilde f$.
It remains to show that the extension is actually holomorphic, and since holomorphicity is a local property, we can use local coordinates and assume that $Z$ is a domain in $\C^m$.
Using the classical Hartogs extension theorem for partial derivatives in the $z$-directions on $\hartogs{q} \times \mathbb C^m$, it is easy to see that $\tilde f$ is holomorphic in each variable separately, hence $\tilde f$ is holomorphic.  
\end{proof}

If $M$ is $\mathcal{C}^2$-smooth, it is easy to construct a $1$-Hartogs figure directly:

\begin{lemma}
\label{smoothgraph}
Let $M = \{ y = \psi(x) \}$ be a $\mathcal C^2$-smooth graph at the origin with $d\psi(0) = 0$. Then for any open set $U$ containing the origin, there exists an embedding $\phi \colon \triangle^{n} \to U$ containing the origin, with $\phi(\hartogs{1})\cap M=\emptyset$.
\end{lemma}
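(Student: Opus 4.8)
The plan is to write down an explicit polynomial shear that realises the Hartogs figure in the flat model $M_0=\R^n=\{y=0\}$, and then to obtain the general case by rescaling together with a perturbation argument using only $\psi(0)=0$, $d\psi(0)=0$ and $\psi\in\mathcal C^2$, i.e.\ that $M$ is quadratically close to $\R^n$ at the origin.

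First I would dispose of the model case $M=\R^n$. Writing $w=(z_2,\dots,z_n)$, one reads off from the definitions that $\triangle^n\setminus\hartogs 1=\{\tfrac12\le|z_1|<1\}\times\{\|w\|_\infty\le\tfrac12\}$. I claim the polynomial automorphism of $\C^n$
\[
\phi_0(z_1,\dots,z_n):=\bigl(z_1+i(\tfrac34+z_2^2+\dots+z_n^2),\ z_2,\dots,z_n\bigr)
\]
works. Indeed, a point of $\phi_0^{-1}(\R^n)$ must have $w$ real and $\operatorname{Im}z_1=-(\tfrac34+|w|^2)$, where now $|w|^2=\sum_{j\ge2}x_j^2\ge0$ because $w$ is real --- this nonnegativity, which is exactly why one uses the squares $z_j^2$ and not $z_j$, is the point. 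Hence on $\phi_0^{-1}(\R^n)\cap\triangle^n$ one has $|z_1|^2=(\operatorname{Re}z_1)^2+(\tfrac34+|w|^2)^2<1$, forcing $\tfrac34+|w|^2<1$, so $\|w\|_\infty<\tfrac12$, while simultaneously $|z_1|\ge\tfrac34+|w|^2\ge\tfrac34$. Thus
\[
\phi_0^{-1}(\R^n)\cap\triangle^n\ \subseteq\ \{\tfrac34\le|z_1|<1\}\times\{\|w\|_\infty<\tfrac12\}\ \subseteq\ \triangle^n\setminus\hartogs 1,
\]
which says precisely that $\phi_0(\hartogs 1)\cap\R^n=\emptyset$; moreover $\phi_0^{-1}(0)=(-\tfrac34 i,0,\dots,0)$ lies in this pocket, so $0\in\phi_0(\triangle^n)$.

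For the general case I would fix $\rho\in(\tfrac34,1)$ and put $\phi:=\epsilon\,\phi_0(\rho\,\cdot\,)$ with $\epsilon>0$ to be chosen small. Then $\rho\cdot\hartogs 1$ is again a $1$-Hartogs figure whose closure is compact in $\triangle^n$, and an analogous computation (with $\tfrac12,1$ replaced by $\tfrac\rho2,\rho$) gives $\overline{\rho\cdot\hartogs 1}\cap\phi_0^{-1}(\R^n)=\emptyset$; hence the compact set $\phi_0(\overline{\rho\cdot\hartogs 1})$ has a positive distance $\delta$ from $\R^n$, and therefore $\phi(\hartogs 1)=\epsilon\,\phi_0(\rho\cdot\hartogs 1)$ has distance $\ge\epsilon\delta$ from $\R^n$. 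On the other hand $\phi(\triangle^n)\subseteq B(0,R\epsilon)$ for a constant $R$ depending only on $n$, and since $\psi(0)=0$, $d\psi(0)=0$, $\psi\in\mathcal C^2$, every point of $M$ inside $B(0,R\epsilon)$ has distance $\le C(R\epsilon)^2$ from $\R^n$ once $R\epsilon$ is below the radius controlling the quadratic Taylor estimate for $\psi$. Choosing $\epsilon$ so small that $CR^2\epsilon<\delta$ then forces $\phi(\hartogs 1)\cap M=\emptyset$. Finally $\phi$ is an embedding, $\phi(\triangle^n)\subseteq U$ for $\epsilon$ small, and $\phi(-\tfrac{3i}{4\rho},0,\dots,0)=0$ with $(-\tfrac{3i}{4\rho},0,\dots,0)\in\triangle^n$ because $\rho>\tfrac34$, so the origin lies in $\phi(\triangle^n)$.

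The step I expect to be the genuine obstacle is producing $\phi_0$. An affine ansatz provably cannot succeed: the pocket $\triangle^n\setminus\hartogs 1$ is too thin in the $w$-directions for an affine totally real $n$-plane to sit inside it while its $z_1$-shadow avoids $\{|z_1|<\tfrac12\}$, and a maximum-principle argument shows $\R^n$ cannot be separated from $\hartogs 1$ by a single coordinate function. One is thus forced to a nonlinear shear, and the quadratic correction $z_2^2+\dots+z_n^2$ is precisely what makes the two competing inequalities ($\|w\|_\infty<\tfrac12$ and $|z_1|\ge\tfrac34$) hold at the same time. After that, everything is routine.
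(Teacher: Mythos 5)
Your proof is correct and rests on the same key device as the paper's: the quadratic shear $z \mapsto z + i\sum_j z_j^2$, whose imaginary part $\sum_j (x_j^2 - y_j^2)$ is nonnegative on (nearly) real points, is what pushes the totally real graph off a Hartogs figure. You merely organize the argument differently --- treating the flat model $\R^n$ first with a fully explicit figure (the translation by $\tfrac{3i}{4}$ making precise the final step the paper leaves to the reader as easy) and then handling the $\mathcal{C}^2$ graph by rescaling and a distance estimate, whereas the paper shears the graph directly and bounds $\mathrm{Im}\, f_C(x,\psi(x)) \geq (C/2 - K)\|x\|^2$.
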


\begin{figure}[ht]
\begin{center}
\begin{pspicture}(-3,-1.5)(3,4)
\pstThreeDCoor[xMin=-4, xMax=4, yMin=-4, yMax=4, zMax=3, arrows=->, nameX=$\R^{n-1}_y$, nameY=$\R^n_x$, nameZ=$y_n$, linecolor=black]
\psplotThreeD[drawStyle=xLines, plotstyle=line, linecolor=black, yPlotpoints=40, xPlotpoints=30, linewidth=0.2pt, hiddenLine=true](0,3)(-1.5,1.5){y dup mul}
\psplotThreeD[drawStyle=yLines, plotstyle=line, linecolor=black, yPlotpoints=40, xPlotpoints=30, linewidth=0.2pt, hiddenLine=true](0,3)(-1.5,1.5){y dup mul}
\end{pspicture}
\end{center}
\caption{\label{img-easyhartogs} The lower bound $y_n = (C/2 - K) \cdot \|x\|^2$ for $\mathrm{Im}( f_C(x, \psi(x)) )$ in Lemma \ref{smoothgraph}.}

\end{figure}
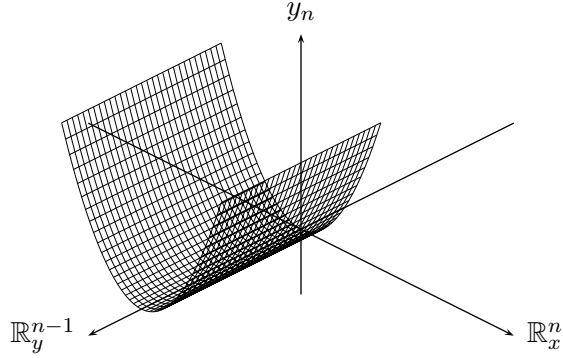

\begin{proof}
For $C>0$ we let $F_C \colon \C^n \to \C^n$ be the map 
\[
F_C(z)=(z_1,\dots,z_{n-1},f_C(z)) := \left(z_1,\dots,z_{n-1},z_n + C i \cdot \sum_{j=1}^{n} z_j^2 \right).
\]
Since $F_C$ is injective near the origin, there exists a constant $K>0$ such that $\|\psi(x)\| < K \|x\|^2$, and by staying close enough to the origin we may assume that $\|\psi(x)\| \leq 1/2 \cdot \|x\|$. 
Choosing $C > 2K$ we see that 
\begin{align*}
\mathrm{Im}(f_C(x,\psi(x))) & =    C \sum_{j=1}^n\left( x_j^2 - \psi_j(x)^2 \right) + \psi_n(x)\\
                            & \geq C \sum_{j=1}^n 1/2 \cdot x_j^2 - K\|x\|^2, 
\end{align*}
Therefore we obtain $\mathrm{Im}( f_C(x, \psi(x)) ) \geq (C/2 - K) \cdot \|x\|^2$.
Using this it is easy to construct a Hartogs figure.
\end{proof}

\section{The Hartogs extension theorem for holomorphic fibre bundles}\label{fibrebundles}

\begin{notation}
We denote the super-level sets of an exhaustion function $\rho$ of the complex manifold $X$ by $X^c := \{ x \in X \,:\, \rho(x) > c \}$, the sub-level sets by $X_c := \{ x \in X \,:\, \rho(x) < c \}$, and its level sets by $\Gamma_c := \{ x \in X \,:\, \rho(x) = c \}$.
\end{notation}

\begin{definition}
\label{defnice}
Let $\rho \colon X \to \R$ be a Morse exhaustion function of the complex manifold $X$. A critical point $x_0 \in X$ of $\rho$ is called \emph{nice} (see \cites{HenkinLeiterer}) if there exist local holomorphic coordinates $z = (z_1, \dots, z_n) = (x_1 + i y_n, \dots, x_n + i y_n)$ such that $\rho$ is of the form 
\begin{equation}
\label{nice}
\rho(z) = \rho(x_0) + \sum_{j=1}^{m} \left( x_j^2 + \mu_j y_j^2 \right) + \sum_{j=m+1}^n \left( x_j^2 - \mu_j y_j^2 \right),
\end{equation}
with $0 \leq \mu_j \leq 1$ for $j = 1, \dots, n$ and $\mu_j < 1$ for $j = m+1, \dots, n$.

For a compact set $L \subset X$ the Morse exhaustion function is called \emph{nice} on $X \setminus L$ if the set of critical points of $\rho$ in $X \setminus L$ is discrete and and if each of them is nice, with not more than one critical point in each level set.
\end{definition}

\begin{theorem}\label{extensionsectionandisom}
Let $X$ be a Stein manifold, $\dim X\geq 2$, let $K\subset K'\subset X$ be compact sets with $K$ holomorphically convex and $X\setminus K'$ connected, and let $Q\subset X\setminus K$ be a closed discrete set of points.
Let $\pi \colon E \to X\setminus (K\cup Q)$ be a holomorphic fibre bundle with Stein fibre, and let $Y$ be a Stein manifold.
Then the following hold
\begin{itemize}
\item[(i)] Any holomorphic map
$f \colon \pi^{-1}(X\setminus (K'\cup Q)) \to Y$ extends uniquely to a holomorphic map
$\tilde f \colon E \to Y$, and 
\item[(ii)] Any holomorhic section $s \colon X\setminus (K'\cup Q) \to E$ extends uniquely to a section $\tilde s \colon X\setminus (K\cup Q) \to E$
\end{itemize}
\end{theorem}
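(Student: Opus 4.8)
The plan is to prove (i) and (ii) in parallel, reducing both to a Hartogs‑type extension statement for $\C$‑valued holomorphic functions on the total space, and then to carry this out by sweeping along the sublevel sets of a nice strictly plurisubharmonic exhaustion, crossing each level set by means of Hartogs figures and Lemma~\ref{localextension}. Write $\Omega:=X\setminus(K\cup Q)$ and $\Omega':=X\setminus(K'\cup Q)$. Since $K=\widehat K$, the complement $X\setminus K$ has no relatively compact connected component (maximum principle for plurisubharmonic functions), and as $X\setminus K'$ is connected with $K\subseteq K'$ this forces $X\setminus K$, and hence $\Omega$ and $\Omega'$ (remove the closed discrete $Q$ from a connected manifold of real dimension $\ge 4$), to be connected; similarly every connected component of $\Omega$, and more generally of every superlevel set appearing below, meets $\Omega'$, which will make all the asserted extensions unique by the identity principle. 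For (i): embed $Y$ as a closed submanifold of some $\C^N$ (Remmert), extend the $N$ coordinate functions, and observe that the extension lands in $Y$ because the set where it does is analytic and contains $\pi^{-1}(\Omega')$; this reduces (i) to $Y=\C$. For (ii): over a holomorphic trivialisation a local section is a holomorphic map into the Stein fibre, hence, after embedding the fibre, a tuple of holomorphic functions; the resulting local extensions patch, again by the identity principle, into a global section. Thus in both cases it remains to show that a holomorphic function on $\pi^{-1}(\Omega')$ extends holomorphically to $\pi^{-1}(\Omega)$.

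\emph{The sweep.} Fix a nice (Definition~\ref{defnice}) strictly plurisubharmonic Morse exhaustion $\rho\colon X\to\R$ with $K=\{\rho\le0\}$ and with all critical points off $Q$, and set $c_0:=\max_{K'}\rho$; if $c_0\le0$ then $K'=K$ and there is nothing to prove, so assume $c_0>0$. For $c\in(0,c_0]$ put $S_c:=X^c\setminus Q$, so $S_{c_0}\subseteq\Omega'$, the bundle $E$ is defined over $S_c\subseteq\Omega$, and $\bigcup_{c>0}S_c=\Omega$. Any component of $S_c$ disjoint from $S_{c_0}$ would lie in $\{c<\rho\le c_0\}$, hence be relatively compact, contradicting the maximum principle for the strictly plurisubharmonic $\rho$; consequently every component of $S_c$ meets $S_{c_0}$ and every holomorphic extension of $f$ to $\pi^{-1}(S_c)$ is unique. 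Let $I\subseteq(0,c_0]$ be the set of $c$ for which such an extension exists; then $c_0\in I$, $I$ is upward closed, and $I$ is closed from below, because if $(c^*,c_0]\subseteq I$ the corresponding extensions agree on overlaps by uniqueness and glue over $\bigcup_{c>c^*}S_c=S_{c^*}$, so $c^*\in I$. If $\inf I=0$, the extensions glue over $\bigcup_{c>0}S_c=\Omega$ and we are done.

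\emph{Crossing levels.} Suppose $c^*:=\inf I>0$; we produce $\delta>0$ with $c^*-\delta\in I$. At each regular point $p\in\Gamma_{c^*}$ the superlevel set $X^{c^*}$ is strictly pseudoconcave, so there is a Hartogs figure $\hartogs{1}\subseteq S_{c^*}$ with $p\in\hartogsh{1}$ and $\hartogsh{1}$ contained in a trivialising polydisc for $E$; Lemma~\ref{localextension}, applied with $Z$ the fibre, extends $f$ from $\pi^{-1}(\hartogs{1})$ to $\pi^{-1}(\hartogsh{1})$, compatibly with the data on $\pi^{-1}(S_{c^*})$ by the identity principle. If $c^*$ is a regular value, performing this at all $p\in\Gamma_{c^*}$ (compact) and noting that any two of the polydiscs so produced overlap in a set meeting $S_{c^*}$, where both local extensions agree with $f$, yields an extension to $\pi^{-1}(S_{c^*-\delta})$ for some $\delta>0$; so $c^*-\delta\in I$, a contradiction. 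If $c^*$ is the critical value of the unique (nice) critical point $x_0\in\Gamma_{c^*}$, one performs the above at the regular points of $\Gamma_{c^*}$; since $X^{c^*-\delta}$ is then an open neighbourhood of $x_0$, this reduces to extending $f$ to a neighbourhood of $x_0$ across a closed set $N$ contained in $\{\rho<c^*\}$. Working in the normal form~\eqref{nice} one exhibits a $q$-Hartogs figure (for a suitable $q$ with $1\le q\le n-1$) contained in the region where $f$ is already defined and with $x_0$ in its associated $\hartogsh{q}$, and applies Lemma~\ref{localextension} once more; anything of $N$ still not reached is then a finite set of points, removable by a last Hartogs extension in dimension $\ge 2$. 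In every case $c^*-\delta\in I$, so $\inf I=0$.

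\emph{The main obstacle.} The genuinely delicate step is the passage across a nice critical level. The Hartogs figures supplied by strict pseudoconcavity degenerate as one approaches the critical point (their size is controlled by the Levi form of $\Gamma_{c^*}$, which vanishes at $x_0$), so after crossing the regular part one is left not with a punctured ball but with a \emph{full}-dimensional residual set $N\subseteq\{\rho<c^*\}$ clustering at $x_0$; to fill it one must exploit the specific quadratic model~\eqref{nice}, where the strict inequalities $\mu_j<1$ for $j>m$ (niceness) and the dimension hypothesis are what allow a standard $q$-Hartogs figure to be bent into $X^{c^*}$. This local construction at nice critical points is the technical heart of the proof; granting it, the bookkeeping above yields Theorem~\ref{extensionsectionandisom}.
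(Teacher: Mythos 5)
Your reduction to $\C$-valued functions, your uniqueness argument via the maximum principle, and your sweep across non-critical levels by Hartogs figures at strictly pseudoconcave boundary points all match the paper's proof (the lemma following Theorem~\ref{extensionsectionandisom} together with Corollary~\ref{noncriticalext}), apart from one smaller point: to glue the local extensions over the regular part you appeal to ``any two of the polydiscs overlap in a set meeting $S_{c^*}$,'' which does not by itself give well-definedness, since a component of the overlap need not meet the known region; the paper handles this by requiring $\hartogsh{1}_j\cap\tilde X^{c'}$ to stay connected under small $\mathcal C^2$-perturbations (condition~\eqref{coverhartogs}) and by adding the bumps one at a time. The paper also first absorbs $Q\cap\{\rho<c\}$ into the holomorphically convex set $K$, which spares you from having to keep the Hartogs figures and their hulls away from $Q$.

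The genuine gap is the crossing of a nice critical level, which you explicitly defer (``one exhibits a $q$-Hartogs figure\dots granting it, the bookkeeping above yields'' the theorem). This is not a routine construction: after sweeping past the regular part of $\Gamma_{c^*}$, the residual unknown set is a full-dimensional cone $\{\rho\le c^*\}$ with vertex at $x_0$ (containing the totally real $(n-m)$-plane $\{x=0,\ y_1=\cdots=y_m=0\}$ in the normal form~\eqref{nice}), and you would need an embedding $\phi\colon\triangle^n\to X$ placing this entire cone inside the ``hole'' $(\triangle^q\setminus\tfrac12\triangle^q)\times\tfrac12\overline{\triangle}^{n-q}$ of a standard $q$-Hartogs figure while $\phi(\hartogs{q})$ stays in the already-extended region; no such figure is exhibited, and the claim that what remains afterwards is ``a finite set of points'' is unsubstantiated. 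This step is exactly what the paper identifies as the technical heart and resolves by a different mechanism: Lemma~\ref{criticalext} invokes Proposition~\ref{lemperturbflat} to replace $\rho$ near $x_0$ by a strictly plurisubharmonic $\tilde\rho$ with no critical values in $(c,c'+s]$ whose level set through $x_0$ is, inside a small ball, the totally real linear subspace above; the non-critical machinery then extends $f$ to $\{\tilde\rho>c\}$, and the leftover totally real Lipschitz set is removed by Corollary~\ref{lipremovable}, which in turn rests on the density of strictly pseudoconcave points in the Shilov boundary (Proposition~\ref{HPC}, proved via Anders\'en--Lempert theory) and an exhaustion-by-maximality argument. Until you either carry out your single-Hartogs-figure construction in the normal form or substitute an argument of this kind, the proof is incomplete at precisely the point where the paper does its real work.
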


\begin{proof}
We first argue that we can assume that $Q=\emptyset$.
For this purpose we let $\rho$ be a strictly plurisubharmonic exhaustion function of $X$, and we pick $c>0$ such that $K'\subset\{\rho<c\}$ and $\{\rho=c\}\cap Q=\emptyset$.
Choose $c'<c$ such that $K'\cup (Q\cap\{\rho<c\})\subset\{\rho<c'\}$.
Then $K\cup (Q\cap\{\rho<c\})$ is holomorphically convex in $\{\rho<c\}$, so we can let this be our new set $K$, and $K'$ be the set $\{\rho\leq c'\}$, and we replace $X$ by $\{\rho < c\}$.

\medskip

Let $\rho \colon X \to \R$ be a $\mathcal C^\infty$-smooth non-negative plurisubharmonic exhaustion function such that $K = \{ \rho=0 \}$, $\rho>0$ on $X\setminus K$, and $\rho$ is strictly plurisubharmonic on $X \setminus K$ (see e.g \cite{Stout}*{Theorem 1.3.8}).
By the Morse Lemma and \cites{HenkinLeiterer} we can assume that $\rho$ is a nice exhaustion function, i.e.\ near each critical point $x_0 \in X \setminus K$ the exhaustion function $\rho$ is of the form \eqref{nice} in suitable local coordinates.
We will give the proof of (i); the proof of (ii) is almost identical.  

\medskip

Choose a non-critical value $c>0$ such that $K' \subset X_c$, and define 
\begin{equation}
s = \inf \{0\leq t\leq c \,:\, f \mbox{ extends to } \pi^{-1}(X^t)\}.
\end{equation}
By Corollary \ref{noncriticalext} below we obtain that $s<c$. Assume to get a contradiction that $s>0$.
By Corollary \ref{noncriticalext} or Lemma \ref{criticalext} $f$ would extend further.  
\end{proof}

We now give the lemmas used in the above proof.  
   
\begin{lemma}
Let $c'>0$ be a non-critical value for $\rho$, and assume that $g \colon \pi^{-1}(X^{c'}) \to Y$ is a holomorphic map. Then there exists $c''<c'$  and an extension of $g$ to $\pi^{-1}(X^{c''})$.
\end{lemma}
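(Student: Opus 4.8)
The plan is to extend $g$ across the level set $\Gamma_{c'}$ one point at a time and then glue, using that $\Gamma_{c'}$ is compact (being a level set of the exhaustion function $\rho$) and that from the super-level side $\Gamma_{c'}$ is strictly pseudoconcave, so that at each of its points a $1$-Hartogs figure pointing into the sub-level set is available; the fibrewise extension is then produced by Lemma~\ref{localextension}. Throughout I use that, since $Y$ is Stein, it suffices to extend functions: embed $Y$ as a closed submanifold of some $\C^N$, extend each component, and the resulting map again takes values in $Y$ by the identity principle, exactly as in the proof of Corollary~\ref{lipremovable}.

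First the local step. Fix $p\in\Gamma_{c'}$. Over a small coordinate ball $U_p\ni p$ the fibre bundle is trivial, $E|_{U_p}\cong U_p\times F$ with $F$ the Stein fibre, so $g$ becomes an $N$-tuple of holomorphic functions on $(U_p\cap X^{c'})\times F$. Since $c'$ is a non-critical value and $\rho$ is strictly plurisubharmonic, $d\rho(p)\neq 0$; absorbing the holomorphic part of the Hessian of $\rho$ into $z_n$ and then normalizing the Levi form by a linear change preserving the $z_n$-direction, we may assume that in suitable holomorphic coordinates centred at $p$ one has $\rho(z)=c'+\operatorname{Re}(z_n)+\|z\|^2+o(\|z\|^2)$. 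Hence, after shrinking $U_p$, the sub-level set satisfies $X_{c'}\cap U_p\subseteq\overline B:=\{\,\|z+e_n\|^2\le 1\,\}$, a ball whose boundary passes through $p$, so that $U_p\cap X^{c'}\supseteq U_p\setminus\overline B$. Every boundary point of $\overline B$ is a global extreme point with respect to the recentred Euclidean norm, so the construction in the proof of Proposition~\ref{HPC} shows that $p$ is strictly $1$-pseudoconcave for $\overline B$, with the $1$-Hartogs figure contained in any prescribed neighbourhood of $p$: there is an injective holomorphic $\phi\colon\triangle^n\to U_p$ with $\phi(\hartogs 1)\subseteq U_p\setminus\overline B\subseteq U_p\cap X^{c'}$ and $p\in\phi(\triangle^n)=\hartogsh 1$, and, taking $\phi(\triangle^n)$ small enough, $\phi(\triangle^n)\setminus X_{c'}$ is connected (as $\Gamma_{c'}$ is a smooth hypersurface near $p$). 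Applying Lemma~\ref{localextension} with $Z=F$, each component of $g$ restricted to $\phi(\hartogs 1)\times F$ extends holomorphically to $\phi(\triangle^n)\times F$; by the identity principle on the connected set $(\phi(\triangle^n)\cap X^{c'})\times F$ the extension agrees with $g$ there and, as above, takes values in $Y$. Thus $g$ extends to a holomorphic map on $\pi^{-1}\!\big(X^{c'}\cup\phi(\triangle^n)\big)$, and $\phi(\triangle^n)$ is a neighbourhood of $p$.

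Finally I would globalize. Extensions of holomorphic maps into a fibre bundle with Stein fibre are unique where they exist (identity principle in the base and in the fibres), so on the relevant connected overlaps the various local extensions agree with one another and with $g$. By compactness, finitely many of the neighbourhoods $\phi(\triangle^n)$ produced above, say $V_{p_1},\dots,V_{p_k}$, cover $\Gamma_{c'}$, and then $\{c''<\rho\le c'\}\subseteq V_{p_1}\cup\dots\cup V_{p_k}$ for some $c''<c'$; the resulting extension of $g$ is therefore defined on all of $\pi^{-1}(X^{c''})$, as required.

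I expect the main obstacle to be the parametrized local step: producing the $1$-Hartogs figure inside $U_p\setminus\overline B$ whose envelope covers a neighbourhood of $p$ of size controlled only by the geometry of $\rho$ near $p$ (independently of the Stein fibre $F$), so that Lemma~\ref{localextension} applies uniformly, together with the bookkeeping needed to guarantee that the sets on which the local extensions and $g$ are compared are connected.
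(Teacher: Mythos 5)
Your local step is essentially the paper's: at a noncritical point of the strictly plurisubharmonic $\rho$ the level set is strictly convex in suitable coordinates, so one gets a $1$-Hartogs figure $\phi(\hartogs{1})\subset X^{c'}$ with $p\in\phi(\triangle^n)$, trivializes the bundle, reduces to functions via Steinness of $Y$, and applies Lemma~\ref{localextension}. That part is fine. The gap is in the globalization. You write that ``on the relevant connected overlaps the various local extensions agree with one another and with $g$,'' and then conclude that the union of finitely many $V_{p_i}=\phi_i(\triangle^n)$ carries a well-defined extension. But the identity principle only forces $g_i=g_j$ on a connected component $W$ of $V_{p_i}\cap V_{p_j}$ if $W$ meets $X^{c'}$ (so that both restrict to $g$ on a nonempty open subset of $W$). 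A component $W$ can perfectly well lie entirely in $\{\rho\le c'\}$: already for two round balls, each crossing a smooth hypersurface, the lens-shaped intersection can sit entirely on one side of it, and such a component can moreover be tangent to $\Gamma_{c'}$, so it meets every collar $\{c''<\rho\le c'\}$ no matter how close $c''$ is to $c'$. On such a $W$ you have no comparison set, and the two local extensions need not coincide; your ``therefore defined on all of $\pi^{-1}(X^{c''})$'' does not follow. You flag this yourself as ``bookkeeping,'' but it is the actual content of the lemma beyond the local step.

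The paper's proof handles exactly this point by never forming the naive union. It chooses the Hartogs figures so that each envelope $\hartogsh{1}_j$ intersects $X^{c'}$ --- and every sufficiently small $\mathcal C^2$-perturbation of $X^{c'}$ --- in a \emph{connected} set, and then proceeds inductively: at each stage the extension is defined on a super-level set $X^{c'}_{l}$ of a perturbed strictly plurisubharmonic function $\rho_l$; the connectedness of $\hartogsh{1}_{l+1}\cap X^{c'}_{l}$ makes the next local extension glue unambiguously onto $X^{c'}_{l}$; and one then bumps $\rho_l$ by $\epsilon\chi$ to produce $\rho_{l+1}$, retaining only the super-level set $X^{c'}_{l+1}\subset X^{c'}_{l}\cup\hartogsh{1}_{l+1}$ before attaching the next piece. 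To repair your argument you would need to add something of this sort (or otherwise arrange that every nonempty component of every overlap used in the gluing meets the region where the extensions are already known to agree).
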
   

\begin{proof}
For any point $x \in \Gamma_{c'}$ there exist local holomorphic coordinates such that $\Gamma_{c'}$ is strictly convex near $x$.
Hence, by compactness, there exist Hartogs figures
$(\hartogs{1}_j, \hartogsh{1}_j), \; j=1, \dots, m$, in $X^{c'}$ such that for all $j$  
\begin{equation}\label{coverhartogs}
\hartogsh{1}_j \cap \tilde X^{c'} \mbox{ is connected for all small $\mathcal C^2$-perturbations $\tilde X^{c'}$ of $X^{c'}$}.
\end{equation}
and such that $\cup_j \hartogsh{1}_j$ covers $\Gamma_{c'}$.
Choose finitely many compact sets $C_j \subset \Gamma_{c'} \cap \hartogsh{1}_j$ such that $\cup_j C_j$ covers $\Gamma_{c'}$.   
Since $Y$ is Stein, we can think of each $g|_{\pi^{-1}(\hartogs{1}_j)}$ as a holomorphic map to $\mathbb C^k$ 
and so a local extension reduces to extension of each component.  Hence Lemma \ref{localextension}
gives an extension to each $\pi^{-1}(\hartogsh{1}_j)$.  It is intuitively clear that we get a jointly well defined extension across $\Gamma_{c'}$.  The following is a precise argument.

We proceed by induction on $l$ and assume that we have found a small $\mathcal C^2$-perturbation $\rho_{l}$ of $\rho$ defining a small perturbation $X^{c'}_{l}$ of $X^{c'}$, with $X^{c'} \cup C_1 \cup \cdots \cup C_{l} \subset X^{c'}_{l}$ and an extension of $f$ to $X^{c'}_{l}$.

By \eqref{coverhartogs} we get a well defined extension of $f$ to $X^{c'}_{l}\cup\hartogsh{1}_{l+1}$.
Let $\chi \in \mathcal C^{\infty}_0(X), \chi\geq 0, \mathrm{supp}\, \chi \subset \hartogsh{1}_{l+1}$ and $\chi\equiv 1$ near $C_{{l}+1}$. Then $\rho_{{l},\epsilon} := \rho_{l} + \epsilon \cdot \chi$ converges to $\rho_{l}$ in $\mathcal C^2$-norm, and defines $\rho_{l+1}$ and  $X^{c'}_{{l}+1}$ for $\epsilon>0$ small enough.

After $m$ steps we have an extension to a full neighborhood of $\Gamma_{c'}$.
\end{proof}

\begin{corollary}\label{noncriticalext}
Let $c'>0$ be a non-critical value for $\rho$, and $g \colon \pi^{-1}(X^{c'}) \to Y$ be a holomorphic map.
Let $0 \leq c'' < c'$ and assume that there are no critical values in the interval $(c'',c')$.
Then $g$ extends to a holomorphic map $\tilde g \colon \pi^{-1}(X^{c''}) \to Y$.
\end{corollary}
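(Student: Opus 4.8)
The plan is a continuity (connectedness of the parameter interval) argument in $t$, with the preceding lemma as the step that pushes an extension strictly past any non-critical level. Put
\[
A := \{\, t \in [c'',c'] \;:\; g \text{ extends to a holomorphic map } \pi^{-1}(X^t)\to Y \,\}.
\]
Because $X^t$ enlarges as $t$ decreases, an extension over $\pi^{-1}(X^t)$ restricts to an extension over $\pi^{-1}(X^{t'})$ for every $t \le t' \le c'$; hence $A$ is upward closed and, since $c' \in A$, we have $A = [s,c']$ or $A = (s,c']$ with $s := \inf A$. The corollary is the assertion $c'' \in A$, and I will prove it by showing first that $s \in A$ and then that $s = c''$.

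For $s \in A$: the key point is that for $t_1 \le t_2$ in $A$ the two corresponding extensions $g_{t_1}, g_{t_2}$ agree on $\pi^{-1}(X^{t_2})$. Both restrict to $g$ on $\pi^{-1}(X^{c'})$, and every connected component $V$ of $X^{t_2}$ meets $X^{c'}$: were $V$ relatively compact, $\rho|_{\overline V}$ would attain its maximum at an interior point (since $\rho = t_2$ on $\partial V$ but $\rho > t_2$ on $V$, and $\overline{X^{t_2}}\subset X\setminus K$ as $t_2>0$), contradicting strict plurisubharmonicity of $\rho$ on $X\setminus K$; so $V$ is not relatively compact, and being an exhaustion $\rho$ is unbounded on $V$, in particular $>c'$ somewhere on it. With the fibre connected (otherwise argue componentwise on the total space), $V\cap X^{c'}$ is a nonempty open subset of $V$, so the identity theorem gives the agreement. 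Thus $\{\,g_t : t\in A\,\}$ is a compatible family and glues to a map on $\bigcup_{t\in A}\pi^{-1}(X^t)=\pi^{-1}(X^s)$, using $\bigcup_{t>s}X^t=X^s$; holomorphy being a local property, this is a holomorphic extension of $g$, so $s\in A$.

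For $s=c''$: suppose instead $s>c''$. First, $s<c'$, for otherwise the preceding lemma applied to $g$ on $\pi^{-1}(X^{c'})$ produces an extension to some $\pi^{-1}(X^{t'})$ with $t'<c'$, and then either $t'\ge c''$ (so $s\le t'<c'$, a contradiction) or $t'<c''$ (so, restricting, $g$ already extends to $\pi^{-1}(X^{c''})$ and we are done). Hence $s\in(c'',c')$, an interval that by hypothesis contains no critical values; in particular $s$ is a non-critical value and $s>0$. Applying the preceding lemma to the extension of $g$ over $\pi^{-1}(X^s)$ obtained in the previous step yields an extension over $\pi^{-1}(X^{t_0})$ for some $t_0<s$; if $t_0\ge c''$ this contradicts $s=\inf A$, and if $t_0<c''$ then restriction gives the desired extension over $\pi^{-1}(X^{c''})$. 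Either way the assumption $s>c''$ is untenable, so $s=c''$ and $g$ extends over $\pi^{-1}(X^{c''})$.

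The main obstacle is the passage to the limit at $s$: one must know that the various finite-Hartogs-figure extensions produced at successive levels by the preceding lemma are mutually compatible, i.e. that the extension of $g$ to $\pi^{-1}(X^t)$ is unique for $t<c'$. This is exactly where strict plurisubharmonicity of $\rho$ enters — it forces every component of a super-level set to reach back into $X^{c'}$ — after which the identity theorem gives uniqueness and the gluing, together with $\bigcup_{t>s}X^t=X^s$, is routine; the second step is then just one more application of the preceding lemma.
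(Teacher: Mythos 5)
Your proof is correct and follows essentially the same route as the paper: set $s=\inf\{t: g \text{ extends to } \pi^{-1}(X^t)\}$, glue the extensions down to level $s$, and use the preceding lemma to push past $s$ unless $s=c''$. The only difference is that you spell out the uniqueness/compatibility of the partial extensions (via the maximum principle forcing every component of $X^{t}$ to meet $X^{c'}$), a step the paper leaves implicit.
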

\begin{proof}
We let $s:=\inf\{c''<t<c' \,:\, g \mbox{ extends to } \pi^{-1}(X^{t})\}$.
Then $c''\leq s<c'$ and $g$ extends to some $\tilde g \colon \pi^{-1}(X^{s}) \to Y$.
If $c''<s$ we would get a contradiction since $\tilde g$ would extend across $\Gamma_s$.
\end{proof}

Next we consider the critical case.  

\begin{lemma}\label{criticalext}
Let $c'>0$ be a critical value for $\rho$ and
$g \colon \pi^{-1}(X^{c'}) \to Y$ be any holomorphic map.
Then there exists a $c'' \in [0,c')$ such that $g$ extends 
to $\pi^{-1}(X^{c''})$.
\end{lemma}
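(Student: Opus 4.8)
The plan is to reduce the critical case to the non-critical case already treated in Corollary \ref{noncriticalext}, by performing the extension across a critical level set in a controlled way. Let $x_0 \in \Gamma_{c'}$ be the unique critical point of $\rho$ in the level set $\Gamma_{c'}$, which we may assume to be nice, so that in suitable local holomorphic coordinates $z = (z_1,\dots,z_n)$ centered at $x_0$ the function $\rho$ has the form \eqref{nice}. Away from $x_0$ the level set $\Gamma_{c'}$ is non-critical, so the argument from the previous lemma (local Hartogs extension via Lemma \ref{localextension}, glued together using small $\mathcal C^2$-perturbations of $\rho$) extends $g$ across every point of $\Gamma_{c'}\setminus\{x_0\}$; the only genuine issue is a neighborhood of $x_0$ itself. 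So it suffices to extend $g$ across a small punctured neighborhood $\Gamma_{c'}\setminus\{x_0\}$ first, and then separately handle the single point $x_0$.

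First I would analyze the local model near $x_0$. In the coordinates of \eqref{nice}, the super-level set $\{\rho > c'\}$ intersected with a small polydisc around $x_0$ is a specific semi-algebraic region; its complement relative to the polydisc deformation-retracts in a way governed by the index data $(m,\mu_j)$. Because $\dim X \geq 2$ (indeed we will need $\geq 3$ for the later theorems, but the bundle case here works with $\geq 2$ via the section reformulation, or one restricts to the relevant situation), one checks that $\{\rho > c'\}$ near $x_0$ contains a Hartogs-type configuration: more precisely, the set $\{\rho > c'\} \cap \triangle^n$ is connected and one can find a $1$-Hartogs figure $(\hartogs{1}, \hartogsh{1})$ inside it with $\hartogsh{1}$ a full polydisc neighborhood of $x_0$ — this is where the explicit quadratic form \eqref{nice} is used, in the same spirit as Lemma \ref{smoothgraph}. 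Applying Lemma \ref{localextension} to $g|_{\pi^{-1}(\hartogs{1})}$ (thinking of $Y$ as embedded in $\C^k$, so the target reduces to functions) we obtain an extension $\tilde g$ on $\pi^{-1}(\hartogsh{1})$, which is a full neighborhood of $\pi^{-1}(x_0)$.

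Next I would reconcile this local extension with the extension already obtained over $\Gamma_{c'}\setminus\{x_0\}$. Both are defined on the overlap, which is an open subset of $\pi^{-1}$ of a connected punctured neighborhood of $x_0$ in $\{\rho \geq c'\}$; by the identity principle on the fibre bundle (using that $E$ is connected over a connected base, and that $\{\rho > c'\}\cup(\text{small nbhd of }x_0)$ is connected) the two extensions agree. This yields a single holomorphic extension of $g$ to $\pi^{-1}(U)$ where $U$ is an open set containing $\{\rho \geq c'-\delta\}\setminus(\text{possibly a smaller sublevel region})$ — in any case containing $\overline{X^{c'}} \cup \{$a neighborhood of $\Gamma_{c'}\}$, hence containing $X^{c''}$ for some $c'' < c'$ close to $c'$. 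Setting $c''$ just below $c'$ (and below the next critical value, so that $[c'', c')$ is free of critical values other than $c'$, which it is) completes the proof; if one wants $c''$ smaller one then invokes Corollary \ref{noncriticalext}.

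\emph{Main obstacle.} The delicate point is the geometry of $\{\rho > c'\}$ near the nice critical point $x_0$: one must verify that a genuine Hartogs figure sits inside the super-level set with the critical point in the interior of its envelope $\hartogsh{1}$, and that the resulting local extension is compatible (via connectedness of the relevant punctured neighborhood and the identity principle) with the extension coming from the non-critical part of $\Gamma_{c'}$. The index condition in \eqref{nice} — in particular that at least the $x$-directions contribute positively and the $\mu_j < 1$ for $j > m$ keep the form from degenerating — is exactly what guarantees this, but making the Hartogs figure explicit and checking the connectivity of $\hartogsh{1}\cap\{$perturbed super-level set$\}$ (the analogue of \eqref{coverhartogs}) in the presence of the critical point is where the technical difficulty promised in the introduction (Theorem \ref{thm3}) is concentrated.
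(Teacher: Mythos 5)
Your reduction to a single Hartogs figure at the critical point $x_0$ is where the proof breaks down, and it is precisely the difficulty that the paper's Proposition \ref{lemperturbflat} is designed to circumvent. Near a nice critical point that is not a local minimum, the set $\{\rho\le c'\}$ is not (a subset of) a totally real graph but a \emph{solid} cone with vertex $x_0$: in the coordinates of \eqref{nice} it is $\bigl\{\sum_{j\le m}(x_j^2+\mu_j y_j^2)+\sum_{j>m}x_j^2\le\sum_{j>m}\mu_j y_j^2\bigr\}$, a full-dimensional set containing the totally real subspace $L=\{x=0,\ y_1=\dots=y_m=0\}$. So the analogy with Lemma \ref{smoothgraph} fails. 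Moreover, a $1$-Hartogs figure of the kind you posit need not exist: take $X=\C^3$ and $\rho-c'=\sum_{j=1}^3(x_j^2-\tfrac12 y_j^2)$ near $x_0=0$. Any affine embedding $\phi$ realizing a $1$-Hartogs figure with $0\in\phi(\triangle^3)$ places $0$ on a slice $S=\phi(\{a\}\times\triangle^{2})$ lying in a complex hyperplane $H\ni 0$; by a dimension count $H\cap i\R^3$ contains a real line through $0$ on which $\rho<c'$ away from $0$, so the open cone $H\cap\{\rho<c'\}$ is nonempty, accumulates at $0$, and reaches $bS$. Since the shell $\phi(\{a\}\times G^{2}(1/2,1))\subset\phi(\hartogs{1})$ separates $0$ from $bS$ inside $S$, it must meet $\{\rho<c'\}$, contradicting $\phi(\hartogs{1})\subset\{\rho>c'\}$. (A curved slice changes this only by higher-order terms; a $q$-Hartogs figure with $q=n-1$ is not obviously ruled out, since complex \emph{lines} avoiding the solid cone do exist, but you would still have to construct it, check that it avoids the whole cone rather than just $L$, and settle the uniqueness/monodromy of the resulting extension --- none of which is ``one checks''.)

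The paper's route avoids any Hartogs figure at $x_0$: Proposition \ref{lemperturbflat} replaces $\rho$ by a strictly plurisubharmonic $\tilde\rho\ge\rho$ which has the form \eqref{eqniceflat} near $x_0$, so that $\{\tilde\rho\le c'\}$ there collapses to the totally real subspace $L$, while away from $x_0$ the new level set sits inside $\{\rho<c'\}$ and $\tilde\rho$ has no critical values in $(c',c'+s]$. Corollary \ref{noncriticalext}, applied to $\tilde\rho$, extends $g$ to $\pi^{-1}(\{\tilde\rho>c'\})$, and Corollary \ref{lipremovable} then removes the totally real set; note that even that step does not produce a Hartogs figure at a prescribed point, only at a dense set of points (Corollary \ref{densehpc}), and concludes by a maximality argument. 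Your first step (extending across $\Gamma_{c'}\setminus\{x_0\}$ by the non-critical argument) is fine after restricting to the compact sets $\Gamma_{c'}\setminus B_\delta(x_0)$, but the proof cannot be completed along the lines you propose without essentially reproving Proposition \ref{lemperturbflat}.
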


\begin{proof}
If $c'$ corresponds to a local minimum, it is easy to extend using a Hartogs figure and Lemma \ref{localextension}.
Otherwise we choose $c>c'$ such that there are no critical values in the interval $(c',c)$.
Let $\tilde\rho$ be a function as in Proposition \ref{lemperturbflat} below.  
Then, by Corollary \ref{noncriticalext} with the function $\tilde\rho$, the map $g$ extends to $\pi^{-1}(\{\tilde\rho>c'\})$, and, by Lemma \ref{lipremovable}, $g$ extends further across the critical point.
So there exists some $c''<c'$ such that the extension is defined on $\pi^{-1}(X^{c''})$.
\end{proof}

\begin{remark}
\label{rembundlesect}
Note that an isomorphism between two holomorphic vector bundles of rank $r$ is a section of an associated holomorphic fibre bundle with fibers $\mathrm{Gl}_r(\C)$.  Hence part (ii) of Theorem \ref{extensionsectionandisom} can be applied for extension of isomorphisms of bundles.  
\end{remark}

Before we give two corollaries, we state the definition of the extension of a bundle. 
\begin{definition}
Let $X$ be a complex manifold, let $U_1\subset U_2\subset X$ be open sets and let $E_j\rightarrow U_j$ be holomorphic
vector bundles for $j=1,2$.   We say that $E_2$ is an \emph{extension} of $E_1$ if $E_2|_{U_1}$ is isomorphic 
to $E_1$.
\end{definition}

\begin{corollary}
Let $X$ be a Stein manifold with a nice exhaustion function $\rho$. Let $c_3<c_2<c_1$, let $Q_j$ be closed discrete subsets of $X^{c_j}$ for $j=1,2,3$, and let $E_1 \to X^{c_1} \setminus Q_1$ be a holomorphic vector bundle, and assume that $E_j$ is an extension of $E_1$ to $X^{c_j}\setminus Q_j$ for $j=2,3$.
Moreover, assume for each $j=1,2,3$ that $Q_j$ is the singularity set of the vector bundle $E_j \to X^{c_j} \setminus Q_j$, i.e. $E_j$ cannot be extended as a holomorphic vector bundle through any point in $Q_j$.  
Then $Q_1\subset Q_2\subset Q_3$, $Q_1=Q_2\cap X^{c_1}=Q_3\cap X^{c_1}$, $Q_2 = Q_3 \cap X^{c_2}$ and $E_2$ and $E_3$ are isomorphic over $X^{c_2}\setminus Q_2$.
\end{corollary}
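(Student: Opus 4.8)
The plan is to bootstrap from Theorem \ref{extensionsectionandisom}(ii) and Remark \ref{rembundlesect}, combined with the following elementary gluing observation, which I will use several times: if $F$ is a holomorphic vector bundle on a punctured neighbourhood $V\setminus\{p\}$ of a point $p$ in a complex manifold, $F'$ is a holomorphic vector bundle on $V$, and there is an isomorphism $F|_{V\setminus\{p\}}\cong F'|_{V\setminus\{p\}}$, then gluing $F$ to $F'|_V$ along $V\setminus\{p\}$ via that isomorphism extends $F$ to a holomorphic vector bundle across $p$. (No Hartogs phenomenon is needed here, only local triviality.) Throughout I will assume, as in the setup of this section, that $X^{c_1}$ (hence the relevant super-level sets) is connected and that $\rho$ is plurisubharmonic, so that its sublevel sets are holomorphically convex.

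First I would unwind the hypothesis that $E_j$ is an extension of $E_1$ for $j=2,3$: by the definition of extension this forces $X^{c_1}\setminus Q_1\subseteq X^{c_j}\setminus Q_j$, i.e.\ $Q_j\cap X^{c_1}\subseteq Q_1$, together with isomorphisms $\iota_j\colon E_j|_{X^{c_1}\setminus Q_1}\xrightarrow{\sim}E_1$. If some $p\in Q_1$ were not in $Q_j$, then, shrinking a neighbourhood $V\ni p$ so that $V\subset X^{c_1}$, $V\cap Q_1=\{p\}$, $V\cap Q_j=\emptyset$, the bundle $E_j|_V$ is genuine and the gluing observation (with $F=E_1$, $F'=E_j|_V$ and $\iota_j$ restricted to $V\setminus\{p\}$) extends $E_1$ across $p$, contradicting that $Q_1$ is the singularity set of $E_1$. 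Hence $Q_1\subseteq Q_2$ and $Q_1\subseteq Q_3$, and together with $Q_j\cap X^{c_1}\subseteq Q_1\subseteq X^{c_1}$ this gives $Q_1=Q_2\cap X^{c_1}=Q_3\cap X^{c_1}$.

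The main step is to extend the isomorphism $\phi:=\iota_3^{-1}\circ\iota_2\colon E_2|_{X^{c_1}\setminus Q_1}\xrightarrow{\sim}E_3|_{X^{c_1}\setminus Q_1}$ inward. I would apply Theorem \ref{extensionsectionandisom}(ii), via Remark \ref{rembundlesect}, to the holomorphic fibre bundle $\mathrm{Isom}(E_2,E_3)\to X^{c_2}\setminus(Q_2\cup Q_3)$, which is locally trivial with Stein fibre $\mathrm{Gl}_r(\C)$, taking $K:=\{\rho\le c_2\}$ (a compact holomorphically convex subset of $X$), $K':=\{\rho\le c_1\}$ (so $K\subset K'$ and $X\setminus K'=X^{c_1}$), and $Q:=Q_2\cup(Q_3\cap X^{c_2})$ (closed and discrete in $X\setminus K=X^{c_2}$). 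Since the previous paragraph gives $Q\cap X^{c_1}=Q_1$, the map $\phi$ is a section of $\mathrm{Isom}(E_2,E_3)$ over $X\setminus(K'\cup Q)=X^{c_1}\setminus Q_1$, and the theorem produces a unique extension to an isomorphism $\tilde\phi\colon E_2|_{X^{c_2}\setminus(Q_2\cup Q_3)}\xrightarrow{\sim}E_3|_{X^{c_2}\setminus(Q_2\cup Q_3)}$.

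Finally I would run the gluing observation once more with $\tilde\phi$. If $p\in Q_2\setminus Q_3$, then $p\in X^{c_2}\subset X^{c_3}$ and $E_3$ is genuine near $p$, so gluing $E_2$ to $E_3$ near $p$ via $\tilde\phi$ extends $E_2$ across $p$, contradicting that $Q_2$ is the singularity set of $E_2$; hence $Q_2\subseteq Q_3$. Likewise, if $p\in(Q_3\cap X^{c_2})\setminus Q_2$ then $E_2$ is genuine near $p$ and gluing $E_3$ to $E_2$ near $p$ via $\tilde\phi$ extends $E_3$ across $p$, contradicting that $Q_3$ is the singularity set of $E_3$; hence $Q_3\cap X^{c_2}\subseteq Q_2$, and with $Q_2\subseteq X^{c_2}$ this yields $Q_2=Q_3\cap X^{c_2}$. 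In particular $Q=Q_2$, so $\tilde\phi$ is an isomorphism of $E_2$ and $E_3$ over $X^{c_2}\setminus Q_2$, and collecting the displayed identities gives all the assertions. I expect the step needing the most care to be the application of Theorem \ref{extensionsectionandisom}: the correct identification of $K$, $K'$, $Q$, the holomorphic convexity of $\{\rho\le c_2\}$ together with connectedness of $X^{c_1}$, and the recognition (Remark \ref{rembundlesect}) that extending the bundle isomorphism $\phi$ is exactly the extension of a section of a fibre bundle with Stein fibre $\mathrm{Gl}_r(\C)$, so that part (ii) applies.
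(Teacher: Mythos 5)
Your proof is correct and follows essentially the same route as the paper's: the definition of extension plus local triviality on punctured neighbourhoods yields the inclusions of singularity sets, and Theorem \ref{extensionsectionandisom}(ii) together with Remark \ref{rembundlesect} extends the isomorphism from $X^{c_1}\setminus Q_1$ to $X^{c_2}\setminus Q_2$. If anything, your ordering is the more careful one, since comparing $Q_2$ with $Q_3$ at points of $X^{c_2}\setminus X^{c_1}$ genuinely requires the extended isomorphism $\tilde\phi$ first, a point the paper glosses over by invoking ``a similar argument'' before stating the extension step.
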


\begin{proof}
By definition of an extension we have that $Q_j\cap X^{c_1}\subset Q_1$ for $j=2,3$,
and by transitivity, the three bundles are isomorphic over $X^{c_1} \setminus Q_1$. 
Now let $x\in X^{c_1}\setminus Q_j$ for $j=2$ or $j=3$.
The isomorphism implies that $E_1$ is trivial in a punctured neighbourhood of $x$, hence $x\notin Q_1$.
Hence $Q_1\subset Q_2$ and $Q_1\subset Q_3$.
It follows that $Q_1=Q_2\cap X^{c_1}=Q_3\cap X^c_{1}$.
Using a similar argument, we conclude that $Q_2 \subset Q_3$ and $Q_2 = Q_3 \cap X^{c_2}$.

Finally, we observe that by Theorem \ref{extensionsectionandisom} the isomorphism between $E_2$ and $E_3$ on $X^{c_1}\setminus Q_1$ 
extends to an isomorphism on $X^{c_2}\setminus Q_2$.
\end{proof}

\begin{corollary}\label{inductivelimit}
Let $X$ be a Stein manifold with a nice exhaustion function $\rho$. 
Let $c_j \in \R$, $c_j<c_{j-1}$, for $j=1,2,3,\dots$. Let $E_1 \to X^{c_1}\setminus Q_1$ be holomorphic vector bundle, and assume that for every $j=1,2,3,\dots$ there exist holomorphic extensions $E_j \to X^{c_j} \setminus Q_j$, 
where $Q_j$ is the singularity set of $E_j$.
Then $E_1$ extends to $X^s\setminus Q$ as a holomorphic vector bundle, where $s = \inf \{c_j\}$ and $Q = \bigcup_{j=1}^\infty  Q_k$.
\end{corollary}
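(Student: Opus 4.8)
The plan is to realise $X^{s}\setminus Q$ as the increasing union of the open sets $U_{j}:=X^{c_{j}}\setminus Q_{j}$ and to glue the bundles $E_{j}$ over this union into a single holomorphic vector bundle extending $E_{1}$.

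First I would settle the bookkeeping of the singularity sets. Since $\rho$ is an exhaustion function and $c_{j}\downarrow s$, one has $X^{s}=\bigcup_{j}X^{c_{j}}$ with $X^{c_{1}}\subset X^{c_{2}}\subset\cdots$. Applying the preceding corollary to triples of the $c_{j}$ — using the definition of an extension whenever the index $1$ is involved — I obtain, for all $i\le j$, that $Q_{i}\subset Q_{j}$, that $Q_{i}=Q_{j}\cap X^{c_{i}}$, and that there is a holomorphic isomorphism $E_{i}\cong E_{j}|_{U_{i}}$. From this I would deduce $Q\cap X^{c_{N}}=Q_{N}$ for every $N$ (because $Q_{j}\subset Q_{N}$ for $j\le N$ and $Q_{j}\cap X^{c_{N}}=Q_{N}$ for $j\ge N$), so that $Q$ is closed and discrete in $X^{s}$ and $X^{s}\setminus Q=\bigcup_{j}U_{j}$ with $U_{1}\subset U_{2}\subset\cdots$.

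Next I would promote the non-canonical isomorphisms above to a coherent system. Recursively, I would pick a holomorphic vector bundle isomorphism $\alpha_{j,j+1}\colon E_{j}\to E_{j+1}|_{U_{j}}$ (these exist by the previous step) and set $\alpha_{i,j+1}:=\alpha_{j,j+1}|_{U_{i}}\circ\alpha_{ij}$ for $i<j$; a short induction then gives the cocycle relations $\alpha_{jk}|_{U_{i}}\circ\alpha_{ij}=\alpha_{ik}$ for $i<j<k$, making $(E_{j},\alpha_{ij})$ a directed system of holomorphic vector bundles along the open inclusions $U_{i}\hookrightarrow U_{j}$. I would then let $E$ be the direct limit of this system — concretely, the quotient of $\bigsqcup_{j}E_{j}$ identifying $v\in(E_{i})_{p}$ with $\alpha_{ij}(v)\in(E_{j})_{p}$ for $p\in U_{i}$, $i<j$ — with the obvious projection $\pi\colon E\to\bigcup_{j}U_{j}=X^{s}\setminus Q$. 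For each $N$ the canonical map $E_{N}\to\pi^{-1}(U_{N})$ is a bijection (onto since each $\alpha_{Nj}$ is an isomorphism over $U_{N}$, injective by the cocycle relations), and these bijections are compatible with restriction; transporting the holomorphic vector bundle structures of the $E_{N}$ through them makes $E$ a holomorphic vector bundle on $X^{s}\setminus Q$ with $E|_{U_{1}}\cong E_{1}$, i.e. an extension of $E_{1}$.

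I expect the only real obstacle to be the recursive construction of the compatible isomorphisms $\alpha_{ij}$ together with checking that the direct limit is honestly a vector bundle rather than merely a complex space. This succeeds precisely because the $U_{j}$ are nested: every pairwise overlap $U_{i}\cap U_{j}$ is the smaller set $U_{i}$, so there is nothing beyond the recorded triangle identity to impose, and the local model over $U_{N}$ is just $E_{N}$.
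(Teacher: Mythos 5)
Your proposal is correct and is essentially the paper's own argument: the paper's proof consists of the single sentence that the previous corollary ``allows us to define the inductive limit of all the extensions, removing unnecessary singular points,'' and your write-up is exactly the detailed execution of that plan -- using the preceding corollary to get the nesting $Q_i=Q_j\cap X^{c_i}$ (hence $Q$ closed and discrete with $X^{s}\setminus Q=\bigcup_j U_j$), recursively fixing compatible isomorphisms to obtain a directed system, and taking the direct limit. No discrepancy to report.
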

\begin{proof}
The previous corollary allows us to define the inductive limit of all the extensions, removing unnecessary singular points.   
\end{proof}

\begin{proposition}\label{extensionsection}
Let $K \subset \triangle \times r \cdot \overline{\triangle}^{n-1}, 0 < r < 1, n \geq 3$, be a closed set with $\triangle^n \setminus K$ connected, and with $K_{z_1}$ polynomially convex (possibly empty for some $z_1$) for all $z_1 \in \triangle$.
Let $\pi \colon E \to \triangle^{n}\setminus K$ be a holomorphic fibre bundle with Stein fibre, and let $Y$ be a Stein manifold.
If we denote by ${\mathfrak{D}}^{n-1}_{r}$ the domain $\triangle \times G^{n-1}(r,1) = \triangle \times (\triangle^{n-1} \setminus r \cdot \overline{\triangle}^{n-1})$, then the following hold

\begin{itemize}
\item[(i)] Any holomorphic map
$f \colon \pi^{-1}({\mathfrak{D}}^{n-1}_{r}) \to Y$ extends uniquely to a holomorphic map
$\tilde f \colon E \to Y$, and 
\item[(ii)] any holomorhic section $s \colon {\mathfrak{D}}^{n-1}_{r} \to E$ extends uniquely to a section $\tilde s \colon \triangle^n\setminus K \to E$.
\end{itemize}
\end{proposition}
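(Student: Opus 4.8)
The plan is, for each fixed value of the first coordinate $z_1\in\triangle$, to extend $f$ (resp.\ $s$) over the slice by appealing to Theorem~\ref{extensionsectionandisom}, thereby producing a candidate map $\tilde f\colon E\to Y$ (resp.\ section) defined on all of $E$, and then to prove that this candidate is holomorphic. Uniqueness is immediate: $\triangle^n\setminus K$ is connected, $\mathfrak{D}^{n-1}_r$ is a non-empty open subset, and — embedding $Y$ properly into some $\C^N$, resp.\ trivialising $E$ locally and embedding its Stein fibre into some $\C^M$ — the identity principle shows that two holomorphic maps, resp.\ sections, agreeing on $\pi^{-1}(\mathfrak{D}^{n-1}_r)$ agree everywhere. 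So only existence requires work, and we treat (i); part (ii) is identical. For each $z_1\in\triangle$ write $E|_{z_1}$ for the restriction of $E$ to the slice $\{z_1\}\times(\triangle^{n-1}\setminus K_{z_1})$; it is a holomorphic fibre bundle with Stein fibre over the Stein manifold $\triangle^{n-1}$. Since $K_{z_1}$ is polynomially convex (hence $\mathcal O(\triangle^{n-1})$-convex), since $r\cdot\overline{\triangle}^{n-1}$ is polynomially convex with $K_{z_1}\subset r\cdot\overline{\triangle}^{n-1}$, and since $\triangle^{n-1}\setminus r\cdot\overline{\triangle}^{n-1}=G^{n-1}(r,1)$ is connected because $n-1\geq 2$, Theorem~\ref{extensionsectionandisom}(i) applies with $X=\triangle^{n-1}$, with ``$K$'' $=K_{z_1}$, ``$K'$'' $=r\cdot\overline{\triangle}^{n-1}$ and $Q=\emptyset$ — note that over this slice $\mathfrak{D}^{n-1}_r$ is exactly $\triangle^{n-1}\setminus r\cdot\overline{\triangle}^{n-1}$. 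This yields a unique holomorphic extension of $f$ over $E|_{z_1}$. Collecting these over all $z_1$ and invoking uniqueness on each slice, we obtain a well-defined map $\tilde f\colon E\to Y$ that equals $f$ on $\pi^{-1}(\mathfrak{D}^{n-1}_r)$ and is holomorphic on every slice $E|_{z_1}$.

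It remains to prove that $\tilde f$ is holomorphic, which is a local statement, so fix $p_0\in E$ over $(z_1^0,w^0)\in\triangle^n\setminus K$. If $w^0\in G^{n-1}(r,1)$ then $\tilde f=f$ on the neighbourhood $\pi^{-1}(\mathfrak{D}^{n-1}_r)$ of $p_0$ and there is nothing to prove, so assume $w^0\in r\cdot\overline{\triangle}^{n-1}$ and hence $w^0\notin K_{z_1^0}$. Because $K$ is closed and contained in $\triangle\times r\cdot\overline{\triangle}^{n-1}$, the set-valued map $z_1\mapsto K_{z_1}$ is upper semicontinuous for the Hausdorff distance, so every compact subset of $\triangle^{n-1}\setminus K_{z_1^0}$ stays disjoint from $K_{z_1}$ for $z_1$ in a small disc $D\ni z_1^0$. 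Using polynomial convexity once more, every component of $\triangle^{n-1}\setminus K_{z_1^0}$ meets the connected set $G^{n-1}(r,1)$ (a relatively compact component would be swallowed by $\widehat{K_{z_1^0}}=K_{z_1^0}$), so $\triangle^{n-1}\setminus K_{z_1^0}$ is connected and $w^0$ can be joined to $G^{n-1}(r,1)$ by a path inside it. Along this path we set up, by the classical continuity/analytic‑continuation construction, a finite chain of Hartogs figures $(\hartogs{1}_k,\hartogsh{1}_k)$, $k=1,\dots,N$, in the $w$-variables, each $\hartogsh{1}_k$ small enough that $D\times\hartogsh{1}_k$ lies in a trivialising chart of $E$ and (after shrinking $D$) in $\triangle^n\setminus K$, arranged so that $\hartogs{1}_1\subset G^{n-1}(r,1)$, $\hartogs{1}_{k+1}\subset\hartogsh{1}_k$, and $w^0\in\hartogsh{1}_N$.

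Now propagate joint holomorphy in the parameter $z_1\in D$: on $\pi^{-1}(D\times\hartogsh{1}_1)$ the bundle is trivial, $\tilde f=f$ on $\pi^{-1}(D\times\hartogs{1}_1)\subset\pi^{-1}(\mathfrak{D}^{n-1}_r)$ is jointly holomorphic, and Lemma~\ref{localextension} — applied in the $w$-variables, with the disc $D$ and the Stein fibre playing the role of the parameter space $Z$, and with $Y$ embedded in $\C^N$ — produces a jointly holomorphic extension across $D\times\hartogsh{1}_1$. Since this extension and the slice-wise $\tilde f$ are both holomorphic on each connected slice $\{z_1\}\times\hartogsh{1}_1$ and agree on the reduced part, the identity principle (exactly as in the proof of Corollary~\ref{lipremovable}) forces them to coincide, so $\tilde f$ is jointly holomorphic on $\pi^{-1}(D\times\hartogsh{1}_1)$. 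Iterating along $\hartogsh{1}_2,\dots,\hartogsh{1}_N$ we reach the neighbourhood $\pi^{-1}(D\times\hartogsh{1}_N)$ of $p_0$, proving holomorphy of $\tilde f$ there; as $p_0$ was arbitrary, $\tilde f\in\mathcal O(E)$. Part (ii) is obtained the same way after a local trivialisation of $E$.

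I expect the main obstacle to be the construction in the second paragraph: producing the chain of Hartogs figures so that it simultaneously stays in $\triangle^n\setminus K$ for \emph{every} $z_1$ in a full disc $D$, links $p_0$ to the region $\pi^{-1}(\mathfrak{D}^{n-1}_r)$ where the data live, and fits inside trivialising charts of $E$ while depending holomorphically on $z_1$. The uniform-in-$z_1$ feature is precisely where the hypothesis that each $K_{z_1}$ be polynomially convex is indispensable: it is what lets us invoke Theorem~\ref{extensionsectionandisom} on every slice and what makes the complementary slices connected and upper semicontinuous in $z_1$; without fibrewise polynomial convexity the slices $\triangle^{n-1}\setminus K_{z_1}$ need not even be connected, and there would be no reason for such a chain to exist uniformly. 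Once the chain is in place, the propagation of holomorphy and the matching with the slice-wise $\tilde f$ are routine consequences of Lemma~\ref{localextension} and the identity principle.
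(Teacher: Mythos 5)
Your first paragraph (slice--wise extension via Theorem \ref{extensionsectionandisom} applied to $X=\triangle^{n-1}$, $K=K_{z_1}$, $K'=r\cdot\overline{\triangle}^{n-1}$, followed by uniqueness from connectedness and the identity principle) is exactly the paper's first step, and it is fine. The gap is in the second step, and it is the one you yourself flag as ``the main obstacle'': the chain of Hartogs figures along a path does not exist in the generality you need, and no ``classical continuity/analytic-continuation construction'' supplies it. The problem is quantitative: the base $\hartogs{1}$ of a standard $1$-Hartogs figure contains the full outer shell $\triangle\times G^{n-2}(1/2,1)$, whose convex hull is already the whole polydisc; hence for any \emph{affine} embedding $\phi$ one has $\phi(\hartogsh{1})\subset\mathrm{conv}\bigl(\phi(\hartogs{1})\bigr)$, so a Hartogs figure whose base lies in the region where joint holomorphy is already known can never reach a point outside the convex hull of that region. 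Since your regions of established joint holomorphy $\hartogsh{1}_k$ are themselves images of polydiscs, a chain with $\hartogs{1}_{k+1}\subset\hartogsh{1}_k$ built from affine figures makes no progress at all; to make progress you must use genuinely curved embeddings, and producing those requires a point where the boundary of the known region is strictly pseudoconcave. Along an arbitrary path in the open set $\triangle^{n-1}\setminus K_{z_1^0}$ there is no reason such points, let alone a whole chain of figures confined to $\triangle^n\setminus K$ uniformly in $z_1\in D$ and to trivializing charts, should exist. In effect you are assuming the conclusion: that joint holomorphy can be propagated from the shell to every point of the slice complement.

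The paper closes exactly this gap by a different mechanism, and it is worth internalizing. Instead of pushing forward along a path, one takes $\Omega$ to be the \emph{maximal} open subset of $\triangle^n\setminus K$ on which the slice-wise extension is jointly holomorphic and sets $\tilde K:=\triangle^n\setminus\Omega\supseteq K$. If $\tilde K\neq K$, there is a $z_0$ with $K_{z_0}\subsetneq\tilde K_{z_0}$; since $K_{z_0}$ is polynomially convex it cannot contain the Shilov boundary $\check S(P(\tilde K_{z_0}))$, so Proposition \ref{HPC} (whose proof uses Anders\'en--Lempert theory, not elementary geometry) produces a \emph{strictly pseudoconcave point} $x'\in\tilde K_{z_0}\setminus K_{z_0}$, and with it an arbitrarily small $(n-1)$-dimensional Hartogs figure with base in $\triangle^n_{z_0}\setminus\tilde K_{z_0}$ and hat containing $x'$. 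Fattening this single figure in the $z_1$-direction and applying Lemma \ref{localextension} together with the slice-wise identity principle (the part of your argument that is correct) contradicts maximality. So the pseudoconcavity machinery of Section \ref{concave} is not an optional convenience here; it is precisely the ingredient that replaces your unconstructed chain.
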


\begin{proof}
Theorem \ref{extensionsectionandisom} implies that $f$ and $s$ have well defined extensions to the slices $\triangle^{n}_{z_1} \setminus K_{z_1}$ for all $z_1 \in \triangle$. 
This gives us extensions $\tilde f$ and  $\tilde s$ of $f$ and $s$ respectively, but we need to show that they are holomorphic in the $z_1$-direction.  

In order to show this, say for $s$, we let $\Omega$ be the largest open subset of $\triangle^n \setminus K$ such that $s$ is holomorphic on $\Omega$, and write $\tilde K=\triangle^n\setminus\Omega$.    
Assume by contradiction that $\tilde K\neq K$.
Then there exists $z_0\in\triangle$ such that $K_{z_0}$ is a proper subset of $\tilde K_{z_0}$. Since $K_{z_0}$ is polynomially convex we see that $K_{z_0}$ cannot contain 
$\check{S}(P(\tilde K_{z_0}))$, hence there is a point $x\in\check{S}(P(\tilde K_{z_0}))\setminus K_{z_0}$. It follows then from Proposition \ref{HPC} that there is a strongly pseudoconcave point $x'\in\tilde K_{z_0}\setminus K_{z_0}$, and hence there exists an arbitrarily small Hartogs figure $(\hartogs{1}, \hartogsh{1})$ of dimension $(n-1)$ with $\hartogs{1} \subset \triangle^n_{z_0} \setminus \tilde K_{z_0}$ and $x' \in \hartogsh{1}_{z_0}$.
For some small $\epsilon>0$ we have that $\triangle_{\epsilon}(z_0)\times \hartogs{1}\subset\triangle^n\setminus\tilde K$, and this gives us a Hartogs figure on which $s$, by local triviality of the fibre bundle $\pi$, extends to some open neighborhood of $x'$ in $\triangle^n$. Contradiction.

The case of a holomorphic map follows the same argument.  
\end{proof}

\section{Extending vector bundles across smooth level sets of a strictly plurisubharmonic exhaustion function}\label{smoothlevelsets}

In this section we will assume throughout that $X$ is a complex manifold with $\dim X = n \geq 3$, and 
we let $\rho$ be a nice exhaustion function.  

\begin{proposition}\label{extendnoncritical}
Let $c$ be a non-critical value of $\rho$, $Q \subset X^c$ be a closed (in $X^c$) discrete set of points and let $E\rightarrow X^c \setminus Q$ be a holomorphic vector bundle whose singular set is $Q$.
Then $Q$ is finite near $\Gamma_c$ and there exist $c'<c$ and a finite set of points $P \subset X^{c'}$, such that $E$ extends to $X^{c'}\setminus (Q\cup P)$.
\end{proposition}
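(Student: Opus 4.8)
The plan is to extend the vector bundle $E$ across the non-critical level set $\Gamma_c$ in two stages: first extend a little bit below $\Gamma_c$ by a local construction near each point of the level set, then use the already-developed machinery (Proposition \ref{extensionsection} and the uniqueness/inductive-limit corollaries) to glue these local extensions together into a single bundle on $X^{c'} \setminus (Q \cup P)$. Since $c$ is non-critical, for each point $x \in \Gamma_c$ we may pick local holomorphic coordinates in which $\Gamma_c$ is strictly convex, so that a neighborhood of $x$ in $X^c$ contains a set of the form ${\mathfrak D}^{n-1}_r$ (after shrinking) on which $E$ is, in particular, defined away from finitely many points of $Q$.

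First I would show $Q$ is finite near $\Gamma_c$. Suppose not; then $Q$ accumulates at some $x_0 \in \Gamma_c$. Working in local coordinates where $\Gamma_c$ is strictly convex near $x_0$, one finds a small Hartogs figure $(\hartogs{1},\hartogsh{1})$ with $\hartogs{1} \subset X^c$, $x_0 \in \hartogsh{1}$, and such that $\hartogs{1}$ avoids $Q$ (using that $Q$ is discrete in $X^c$, hence meets the compact core of the Hartogs figure in only finitely many points, which can be avoided by a slight translation). On $\hartogs{1}$ the bundle $E$ is defined, and by local triviality together with Proposition \ref{extensionsection} applied on $\hartogsh{1}$ — viewing the transition functions as $\mathrm{Gl}_r(\C)$-valued maps and using that $\hartogsh{1}_{z_1}\setminus (Q\cap\hartogsh{1})_{z_1}$ is connected with polynomially convex complement in each slice — $E$ extends holomorphically across all of $\hartogsh{1}$, in particular across the accumulation point and infinitely many nearby points of $Q$. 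This contradicts that $Q$ is the singular set. So $Q \cap V$ is finite for a neighborhood $V$ of $\Gamma_c$; shrinking $c$ slightly we may assume $Q$ is finite in $X^{c_0}$ for some $c_0 < c$, and these finitely many points go into the set $P$.

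Next, for the actual extension: cover the compact set $\Gamma_c$ by finitely many charts $U_1,\dots,U_m$ in each of which $\Gamma_c$ is strictly convex, arranged so that each $U_j \cap X^c$ contains (a biholomorphic copy of) a domain ${\mathfrak D}^{n-1}_{r_j}$ and $U_j$ contains the corresponding polydisc minus a closed polynomially-convex-in-slices set; after removing the finitely many points of $Q$ near $\Gamma_c$, Proposition \ref{extensionsection}(i) applied to the $\mathrm{Gl}_r(\C)$-cocycle of $E$ (equivalently, using Theorem \ref{extensionsectionandisom} and Remark \ref{rembundlesect}) extends $E$ across each $U_j$, i.e. produces a bundle $E_j$ on $U_j \setminus (\text{finite set})$ agreeing with $E$ where both are defined. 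On overlaps $U_j \cap U_k \cap X^c$ the bundles $E_j$ and $E_k$ agree with $E$, hence agree with each other on a nonempty open set; by the uniqueness of extensions (Theorem \ref{extensionsectionandisom}(ii)/Remark \ref{rembundlesect}, or the identity principle for the transition isomorphisms) they agree on the overlap of their domains. Thus the $E_j$ together with $E$ on $X^c \setminus Q$ patch to a holomorphic vector bundle on $X^c \cup \bigcup_j U_j$ minus a finite set; this set contains $X^{c'}$ for some non-critical $c' < c$ close enough to $c$, and enlarging $P$ by the finitely many exceptional points produces the desired extension to $X^{c'} \setminus (Q \cup P)$.

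The main obstacle I expect is not the patching but making precise, in arbitrary local coordinates near a point of the strictly convex level set, that a neighborhood really does contain a domain of the form ${\mathfrak D}^{n-1}_r$ to which Proposition \ref{extensionsection} applies — in particular that the relevant slices $K_{z_1}$ (which here encode the part of $\Gamma_c$'s "outside" plus the finitely many removed points) are polynomially convex and that the complement is connected after the coordinate change. This is a routine-but-delicate geometric bookkeeping step: one wants to choose the convexifying coordinates and the radius $r$ uniformly over the compact level set, shrink so the finitely many bad points of $Q$ lie in the inner polydisc, and verify connectedness of $\triangle^n\setminus K$ using that $X^c$ near $\Gamma_c$ is connected. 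Once this normalization is in hand, everything else is an application of results already proved in Sections \ref{concave} and \ref{fibrebundles}.
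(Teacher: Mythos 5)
Your overall architecture (localize near the strictly convex level set, produce local extensions with finite exceptional sets, then glue using uniqueness of bundle isomorphisms) matches the paper's, and your gluing step is essentially the paper's second stage. But there is a genuine gap at the heart of the argument: you never actually extend the bundle across $\Gamma_c$, only the maps and sections \emph{of} a bundle. Proposition \ref{extensionsection} and Theorem \ref{extensionsectionandisom} take as hypothesis a fibre bundle already defined on the larger set $\triangle^n\setminus K$ (resp.\ $X\setminus(K\cup Q)$) and extend sections or $Y$-valued maps of it; they cannot produce the bundle $E_j$ on $U_j$ minus a finite set, nor can they extend $E$ ``across all of $\hartogsh{1}$'' in your finiteness argument. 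Your phrase about applying Proposition \ref{extensionsection}(i) to ``the $\mathrm{Gl}_r(\C)$-cocycle of $E$'' does not work as stated: the cocycle is subordinate to an open cover of $X^c\setminus Q$ which has no canonical extension past $\Gamma_c$, and the restriction of $E$ to the local Hartogs-type region $W=(\triangle_\varepsilon(z^\ast)\times\triangle^{n-1})\cup(\triangle\times G^{n-1}(r,1))$ need not be trivial --- if it were, there would be nothing to prove. The whole difficulty is concentrated exactly in this local extension, and you assume it.

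The missing ingredient is Siu's theorem on Hartogs-type extension of coherent analytic sheaves (\cite{Siu}, p.~225): the paper's Lemma \ref{extendsheaf} extends $E$ from $W$ to a coherent analytic sheaf $\mathcal S$ on the whole polydisc $\phi(\triangle^n)$, which is locally free outside a \emph{finite} set $\tilde Q$. This single step is the source of everything you need: it gives the local extension of the bundle, it gives the finite exceptional set $P$ (as the union of the $\tilde Q_j\cap\Gamma_c$ over a finite cover), and it gives the finiteness of $Q$ near $\Gamma_c$ (since $Q\cap\phi(\triangle^n)\subset\tilde Q$ once one extends the identification isomorphism via Proposition \ref{extensionsection} and Remark \ref{rembundlesect}). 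Only after this, once one knows $E$ is locally trivial near $\Gamma_c\setminus P$ and extends as a sheaf near $P$ (so that a small bump perturbation of $\rho$ moves the level set past $P$), does the cocycle/isomorphism-extension machinery you describe take over and push the bundle to $X^{c'}$. Without the coherent-sheaf step your proof has no mechanism for crossing the level set at all.
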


The first step for proving the proposition will be the following lemma:

\begin{lemma}\label{extendsheaf}
Under the assumptions of Proposition \ref{extendnoncritical} the set $Q$ is finite near $\Gamma_c$ and there exists a finite set of points $P \subset \Gamma_{c}$ such that for any point $x\in\Gamma_{c}\setminus P$, there exists an open neighborhood $U$ of $x$ in $X$ with the property that $E$ is a holomorphically trivial vector bundle on $U\cap X_{c}$. Moreover, there exists an open neighborhood $V$ of $P$ such that $E$ extends as a coherent analytic sheaf $\mathcal S$ on $X^{c}\cup V$.
\end{lemma}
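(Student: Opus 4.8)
The plan is to analyze the structure of a holomorphic vector bundle near a smooth level set $\Gamma_c$ by reducing to a local model and using the pseudoconcavity results of Section \ref{concave}. First I would observe that since $c$ is non-critical, for every $x \in \Gamma_c$ there are local holomorphic coordinates in which $\Gamma_c$ is strictly convex, so that $X_c$ contains, near $x$, a domain of the form $\triangle \times G^{n-1}(r,1)$ after a biholomorphic change of coordinates — more precisely, $X^c \cap U$ contains a $1$-Hartogs figure $\hartogs{1}$ whose companion polydisc $\hartogsh{1}$ straddles $\Gamma_c$. Since $\dim X \geq 3$, we may apply Proposition \ref{extensionsection} (or rather its proof, together with Theorem \ref{extensionsectionandisom}) to transition functions. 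The idea is: on $\hartogs{1}$ the bundle $E$ is holomorphically trivial (shrinking if necessary, since $\hartogs{1}$ retracts onto a polydisc), so we obtain a trivialization there; the obstruction to extending this trivialization across $\Gamma_c$ into a full neighborhood of $x$ is governed by whether $x$ is ``close to'' the singular set $Q$, and the finiteness of the relevant obstruction points is what gives the finite set $P$.

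The key steps, in order, are as follows. (1) Establish finiteness of $Q$ near $\Gamma_c$: the set $Q$ is closed and discrete in $X^c$, and I claim its closure meets $\Gamma_c$ in a set whose relevant accumulation behavior is controlled. Actually the cleaner route is to first prove the local triviality statement away from a finite set, and deduce finiteness of $Q$ near $\Gamma_c$ as a consequence, since a bundle that is trivial on $U \cap X_c$ for a neighborhood $U$ of $x$ forces $Q$ to miss $U$ near $\Gamma_c$. (2) For a fixed $x \in \Gamma_c$, pick coordinates making $\Gamma_c$ strictly convex, producing a biholomorphic image of $\triangle^n$ with $X^c$ corresponding to a region containing $\triangle \times G^{n-1}(r,1) = {\mathfrak{D}}^{n-1}_r$; note that $K := (\triangle^n \setminus X^c) \cap \triangle^n$ sits inside $\triangle \times r\cdot\overline{\triangle}^{n-1}$ and has polynomially convex slices (a strictly convex slice of a ball-like region is convex, hence polynomially convex). (3) Apply Proposition \ref{extensionsection}(i) to the transition cocycle of $E$ — viewing transition functions on overlaps as maps to $\mathrm{Gl}_r(\C)$, which is not Stein, but we instead apply part (ii) after passing to an associated principal-bundle/frame-bundle picture, or, following Remark \ref{rembundlesect}, extend the isomorphism data. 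This yields that $E|_{{\mathfrak{D}}^{n-1}_r}$, which is trivial, has its trivialization extend to all of $\triangle^n \setminus K$, giving a trivialization of $E$ on a neighborhood of $x$ intersected with $X_c$, provided $x \notin K$, i.e. provided $x$ is not a limit of the part of $\triangle^n \setminus X^c$ — but on $\Gamma_c$ this is automatic except where the slice structure degenerates, which happens only at finitely many ``tangential'' points, and these, together with points where $Q$ accumulates, form $P$. (4) For the final clause, near each point of $P$ we have $E$ defined on $X^c \cup (V \cap X_c) \setminus Q$ where $V \cap X_c$ is a one-sided neighborhood; extend $E$ as a coherent sheaf using the fact that the ideal/module sheaves of the transition data extend across a thin (real codimension $\geq 1$, in fact the removable slices) set — invoking a second-Cousin or Andreotti–Siu type coherence extension, or more simply: the trivial extension of $E$ as the pushforward $\mathcal S := j_* E$ under the inclusion $j$ of $X^c \cup V \setminus P$, which is coherent by Siu's extension theorem for coherent sheaves across small sets once we know $E$ extends as a sheaf on the complement of a thin set.

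The main obstacle I expect is step (4), the coherent-sheaf extension near the finitely many bad points $P$: there, $E$ is only given on a one-sided neighborhood of $\Gamma_c$ minus a discrete set, and a genuine vector-bundle extension need not exist (that is exactly why the theorem only claims a \emph{coherent sheaf}). The delicate point is to produce generators and relations for $\mathcal S$ that glue across $\Gamma_c$ near $P$; I would handle this by choosing, on a neighborhood $V$ of each $p \in P$, finitely many holomorphic sections of $E$ over $(V \cap X^c) \cup (V \cap X_c \setminus Q)$ that generate $E$ there — obtainable since on $X^c$ the bundle is already globally defined and on the Hartogs part sections extend by Proposition \ref{extensionsection}(ii) — then define $\mathcal S$ as the subsheaf of $\mathcal{O}^{\oplus N}$ they generate, and verify coherence by checking the relation sheaf is finitely generated, again using that relations extend across the removable set by the same Hartogs-type argument. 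A secondary technical point is the precise verification that the set of ``tangential'' points of $\Gamma_c$ — where one cannot set up the product coordinates $\triangle \times G^{n-1}(r,1)$ transversally — is finite; this follows because $\rho$ is nice, hence real-analytic near $\Gamma_c$, so these points form a real-analytic subset of a compact strictly pseudoconvex hypersurface that is nowhere dense, and a further application of Corollary \ref{densehpc}/Proposition \ref{HPC} shows one only needs to exclude an additional finite set to run the argument at every remaining point.
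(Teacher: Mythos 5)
There is a genuine gap at the heart of your step (3): you assert that $E$ restricted to the Hartogs figure (equivalently to ${\mathfrak{D}}^{n-1}_r$) is holomorphically trivial ``since $\hartogs{1}$ retracts onto a polydisc, shrinking if necessary.'' A retraction gives only topological triviality; a Hartogs figure is not Stein (its envelope of holomorphy is the whole polydisc), so the Oka--Grauert principle does not convert this into holomorphic triviality, and you cannot shrink the figure into a single trivializing chart of $E$ because the shell part $\triangle\times G^{n-1}(r,1)$ must encircle the sublevel region where $E$ is not defined. In fact, if this triviality held, the trivialization would extend by Proposition \ref{extensionsection} to all of $\triangle^n\setminus K$, the exceptional set $P$ would always be empty, and the coherent-sheaf clause of the lemma would be superfluous --- which contradicts the known obstructions to extending bundles across such figures (this is exactly why the lemma is stated with a sheaf-theoretic fallback). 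The paper's proof instead invokes Siu's theorem (\cite{Siu}, p.~225): the bundle on a figure of the form $(\triangle_\varepsilon(z^\ast)\times\triangle^{n-1})\cup(\triangle\times G^{n-1}(r,1))$ extends to a coherent analytic sheaf on the full polydisc which is locally free off a \emph{finite} set $\tilde Q$; both the finite set $P$ and the sheaf $\mathcal S$ come directly from there, and Proposition \ref{extensionsection} is then used only to extend the isomorphism between $E$ and this extension $\tilde E$ (so that $\tilde E$ really is an extension of $E$ and $Q\cap\varphi(\triangle^n)\subset\tilde Q$ is finite). Your step (4), which tries to manufacture the coherent extension by hand near $P$ via generators and relations or a pushforward $j_*E$ (not coherent in general), is essentially an attempt to reprove Siu's theorem and is not carried out.

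Two secondary points. First, the Hartogs-type figure must be chosen inside $X^c\setminus Q$, and since $Q$ may a priori accumulate on all of $\Gamma_c$ (its finiteness near $\Gamma_c$ is part of the conclusion), this requires the paper's explicit choice of the slice $\{z^\ast\}\times\triangle^{n-1}$ and the radius $t$ using the countability and closedness of $Q$; your outline does not secure this. Second, your worry about ``tangential'' points of $\Gamma_c$ is a red herring: since $c$ is a non-critical value of a strictly plurisubharmonic $\rho$, every point of $\Gamma_c$ admits coordinates in which $\Gamma_c$ is strictly convex, so no additional finite exceptional set arises from the geometry of the level set.
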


\begin{proof}
Let $x \in \Gamma_{c}$. Then we can assume that $\Gamma_{c}$ is strictly convex in suitable local holomorphic coordinates, and hence it is easy to construct a holomorphic embedding $\phi \colon \triangle^n \to X$ with $\phi(0) = x$ and a set $W$ of the form
\begin{equation}
\label{eq-extendsheaf-hartogs}
\left( \triangle_\varepsilon(z^\ast) \times \triangle^{n-1} \right) \cup \left( \triangle \times G^{n-1}(r,1) \right)
\end{equation}
for some $z^\ast \in \triangle$, $\varepsilon \in (0,1)$, $r \in (0,1)$ and $\triangle_\varepsilon(z^\ast) = \{z \, : \, |z - z^\ast| < \varepsilon \}$
with
$\varphi(W) \subset X^c\setminus Q$,
where $Q$ is the singularity set of $E$.

Indeed, due to the strict convexity of $\Gamma_c$, we can first choose an embedding $\varphi \colon \triangle^n \to X$ such that $\varphi(0) = x$ and $\varphi\left( (\{0\} \times \triangle^{n-1}) \cup(\triangle \times b \triangle^{n-1}) \right) \subset X^c \cup \{x\}$.
Then, taking into account the fact that $Q$ is countable, we can choose $z^\ast \in \triangle$ and $t \in (0,1)$ such that $\varphi\left( (\{z^\ast\} \times \triangle_t^{n-1}) \cup(\triangle \times b \triangle_t^{n-1}) \right) \subset X^c \setminus Q$ and hence, by closedness of $Q$ in $X^c$, there exists a small enough neighborhood of the set $(\{z^\ast\} \times \triangle_t^{n-1}) \cup (\triangle \times b \triangle_t^{n-1})$ in $\triangle^n$ which will give, after a homothety $\triangle_t^{n-1} \to \triangle^{n-1}$, the open set $W$ in \eqref{eq-extendsheaf-hartogs} with the required properties.

Then by Siu, page 225 in \cites{Siu}, the vector bundle $E$ extends to a coherent analytic sheaf $\mathcal S$ on $\phi(\triangle^n)$, which is a holomorphic vector bundle $\tilde E \to \phi(\triangle^n) \setminus \tilde Q$ where $\tilde Q$ is a finite set of points.
In view of Remark \ref{rembundlesect} above we can conclude by Proposition \ref{extensionsection}, applied to the fibre bundle with fibres $\mathrm{Gl}_r(\C)$, that any isomorphism $\Psi \colon E|_G \to \tilde E|_G$, where $G := \phi(\triangle \times G^{n-1}(r, 1))$, extends to a vector bundle isomorphism 
\[
\tilde\Psi \colon E|_{\phi(\triangle^n) \setminus (X_{c} \cup Q \cup \tilde Q)} \to \tilde E|_{\phi(\triangle^n) \setminus (X_{c} \cup Q \cup \tilde Q)}.
\]
In particular, the set $Q \cap \varphi(\triangle^n) \subset \widetilde Q$ is finite. By the compactness of $\Gamma_c$ we can find a finite covering $\Gamma_c$ by the sets $\varphi_j(\triangle^n), j = 1,2, \dots, m$, described above. Therefore, the set $P$, defined as the union of the corresponding sets $\widetilde Q_j \cap \Gamma_c$ (where $\widetilde Q_j$ is the described set $\widetilde Q$ which corresponds to $\varphi_j(\triangle^n)$), as well as the set $Q$ near $\Gamma_c$ are finite.
For $x \in \Gamma_c \setminus P$, we can take an embedding $\varphi \colon \triangle^n \to X$ with $x \in \varphi(\triangle^n)$ and such a small image that $\varphi(\triangle^n) \cap Q = \emptyset$. This shows that for $U = \varphi(\triangle^n)$ the bundle $E$ is trivial on $U \cap X_c$.
The existence of the required neighborhood $V$ of $P$ and the extension of the vector bundle $E \to X^c \setminus Q$ to a coherent sheaf $\mathcal S$ on $X^c \cup V$ follows from the above argument applied to a covering of $P$ by finitely many polydiscs $\varphi_j(\triangle^n), \; j = 1,\dots,m$, and then extending $E$ to a coherent sheaf $\mathcal S_j$ for each $j=1,\dots,m$.
\end{proof}

\begin{proof}[Proof of Proposition \ref{extendnoncritical}]
We now let $\chi \in \mathcal C^\infty_0(X)$ be such that $\chi \geq 0$ and $\chi(p) > 0$ for all $p\in P$ provided by the preceding lemma. 
If $\epsilon>0$ is small enough, then the function $\rho_\epsilon := \rho + \epsilon \chi$ is strongly plurisubharmonic without critical points near $\Gamma_{c,\epsilon} := \{x \in X \,:\, \rho_\epsilon(x) = c\}$, and by the last lemma we have an extension of $E$ across $\Gamma_{c, \epsilon}$ which we may assume to be a trivial bundle near any point of $\Gamma_{c,\epsilon}$.
To avoid too many indices we now drop the subscript $\epsilon$, assume that $E$ was already locally trivial near $\Gamma_c$, and proceed with the extension.

Let $U_i, i=1,\dots,l$, be an open cover of $\Gamma_c$ such that $E|_{X^c \cap U_i}$ is trivial for every $i$.
Choose a finite number of Hartogs figures $(\hartogs{1}_j, \hartogsh{1}_j)$ such that for each $j$ one has $\hartogs{1}_j \subset X^c$, $\hartogsh{1}_j \subset U_i$ for some $i$, $\hartogsh{1}_j \cap X^c$ is connected, and such that $\bigcup_j \hartogsh{1}_j$ covers $\Gamma_c$.
It follows from Proposition \ref{extensionsection} and Remark \ref{rembundlesect} that if $\tilde X^c$ is a sufficiently small $\mathcal C^2$-perturbation of $X^c$ with $X^c \subset \tilde X^c$, and if $\tilde E$ is an extension of $E$ to $\tilde X^c$, then $\tilde E$ is holomorphically trivial on $\hartogsh{1}_j \cap \tilde X^c$ for each $j$.
Now, an argument as at the end of the proof of Lemma 3.4 shows that $E$ extends across $\Gamma_c$.
\end{proof}

\begin{corollary}\label{noncriticalextension}
Let $X$ be a Stein manifold with a nice exhaustion function $\rho$, let $Q \subset X^c$ be a closed discrete set for a non-critical value $c$, and let $E\rightarrow X^c\setminus Q$ be a holomorphic vector bundle such that $Q$ is its singularity set.
Then for each $c'<c$ with no critical values in $(c',c]$ there exists a discrete subset  $Q' \subset X^{c'}$ and an extension of $E$ to $X^{c'} \setminus Q'$ with $Q'$ being singularity set of this extension.
\end{corollary}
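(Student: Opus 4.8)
The plan is to ``integrate'' Proposition \ref{extendnoncritical} over the interval $(c',c]$, exactly as in the proof of Theorem \ref{extensionsectionandisom}, but without the critical case, since by hypothesis $(c',c]$ contains no critical value. I would let $\mathcal T$ denote the set of $t \in [c',c]$ such that $E$ extends to a holomorphic vector bundle on $X^t \setminus Q_t$ for some closed discrete $Q_t \subset X^t$ which is the singularity set of that extension. Then $c \in \mathcal T$ with $Q_c = Q$, so $\mathcal T \neq \emptyset$, and I would set $s := \inf \mathcal T \in [c',c]$ and aim to prove $s = c'$ together with $s \in \mathcal T$; this immediately gives the corollary with $Q' := Q_s$.

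First I would check that $s \in \mathcal T$. Choose a non-increasing sequence $t_j \in \mathcal T$ with $t_j \to s$; if it is eventually constant this is immediate, so assume $t_j$ strictly decreases to $s$, with extensions $E_j \to X^{t_j}\setminus Q_{t_j}$. Each $E_j$ is an extension of the fixed bundle $E \to X^c \setminus Q$, so by the corollary preceding Corollary \ref{inductivelimit} these extensions are mutually compatible, their singularity sets are nested, $Q_{t_i} \subset Q_{t_j}$ whenever $t_i \ge t_j$, and $\bigl(\bigcup_k Q_{t_k}\bigr)\cap X^{t_j} = Q_{t_j}$ for each $j$. Since $X^s = \bigcup_j X^{t_j}$, these identities show that $Q_s := \bigcup_j Q_{t_j}$ is closed and discrete in $X^s$, and Corollary \ref{inductivelimit} assembles the $E_j$ into an extension of $E$ to $X^s \setminus Q_s$ whose singularity set is precisely $Q_s$. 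Hence $s \in \mathcal T$.

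Next I would rule out $s > c'$ by contradiction. If $s > c'$, then $s \in (c',c]$, and $s$ is a non-critical value of $\rho$: for $s = c$ this is the hypothesis, and for $s \in (c',c)$ it holds because $(c',c]$ is free of critical values. Applying Proposition \ref{extendnoncritical} to $E \to X^s \setminus Q_s$ at the value $s$ produces some $c'' < s$ and a finite set $P \subset X^{c''}$ with $E$ extending to $X^{c''}\setminus(Q_s \cup P)$; moreover $Q_s$ is finite near $\Gamma_s$, so $Q_s \cup P$ is closed and discrete in $X^{c''}$ and the singularity set of this new extension is a closed discrete subset of it. If $c'' \ge c'$, this places $c''$ in $\mathcal T$ with $c'' < s = \inf\mathcal T$, a contradiction; if $c'' < c'$, restricting the extension to $X^{c'} \subset X^{c''}$ shows $c' \in \mathcal T$, again contradicting $s > c'$. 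Therefore $s = c'$, and by the previous step $E$ extends to $X^{c'} \setminus Q'$ with $Q' := Q_s$ its singularity set, which is closed and discrete in $X^{c'} = X^s$.

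I expect the only delicate point to be the middle step: verifying that the inductive limit of the extensions along $t_j \downarrow s$ again has a closed discrete singularity set, rather than the $Q_{t_j}$ accumulating somewhere in $X^s$. This is exactly what the nesting and restriction identities for singularity sets in Corollary \ref{inductivelimit} and the corollary before it are designed to control. Everything else is the routine infimum argument, in which the hypothesis that $(c',c]$ contains no critical value is used precisely to ensure that $s$ is non-critical, so that Proposition \ref{extendnoncritical} — rather than a critical analogue — applies at $s$.
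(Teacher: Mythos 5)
Your proposal is correct and follows essentially the same route as the paper: an infimum argument in which Corollary \ref{inductivelimit} handles the limit value $s$ and Proposition \ref{extendnoncritical} rules out $s>c'$ since $s$ would then be non-critical. You merely spell out the details (nesting of singularity sets, closedness and discreteness of $Q_s$, and the contradiction step) that the paper's one-line proof leaves implicit.
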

\begin{proof}
Let $s := \inf \{ c'' < c \;:\; E \mbox{ extends to a vector bundle on } X^{c''}\setminus Q'', \\ \mbox{ where } Q'' \subset  X^{c''} \mbox{ is a closed discrete set} \}$.
By Corollary \ref{inductivelimit} we have that $E$ extends to $X^s \setminus Q^s$ for some closed discrete set $Q^s \subset X^s$ and so $s=c'$.
\end{proof}

\section{Proof of Theorem \ref{thm3}}
\label{totallyrealsets}

Before proving the theorem we first give the following definition.

\begin{definition}
\label{defloclipset}
Let $X$ be a complex manifold. A subset $M \subseteq X$ is called a \emph{totally real Lipschitz set} if for each point $p \in M$ there exists a holomorphic coordinate neighborhood $z \colon U \to V$ containing $p$ such that $z(U \cap M)$ is contained in the graph of a Lipschitz map $\psi \colon \R_x^n \cap V \to \R_y^n \cap V$, where $z = x + i y$, and with Lipschitz constant strictly less than one.
\end{definition}

\begin{proof}[Proof of Theorem 1.3]
By Corollary \ref{lipremovable}, transition maps extend across totally real Lipschitz sets. Hence it is enough to show that there is a discrete set of points $P \subset M$ such that for any point $x_0 \in M \setminus P$, there is an open neighborhood $U$ of $x_0$ with the property that $E$ is holomorphically trivial on $U \setminus M$.

Let $\Omega \subset M$ be the set consisting of all points $x \in M$ that have a neighborhood $U \subset X$ such that $E|_{U \setminus M}$ is trivial. We will first show that $\Omega \neq \emptyset$, and then that every point $x \in b\Omega$ is isolated in $M$.

By Corollary \ref{densehpc}, there exists an embedding $\phi \colon \triangle^n \to X$ with $\phi(\triangle^n)\cap M \neq \emptyset$, and such that $\phi(\hartogs{1})\subset X\setminus M$.

By choosing a possibly smaller Hartogs figure, we may assume that $E$ is a vector bundle on $\phi(\hartogs{1})$. Now $E$ extends to a coherent analytic sheaf $\mathcal S$ on $\phi(\triangle^n)$, and $\mathcal S$ is locally free outside a finite set of points.   
Arguing as in the proof of Lemma \ref{extendsheaf}, we see that $E$ may be trivialized outside a locally finite set of points in $M$.

Next, $\tilde M := M \setminus\Omega$ is itself a totally real Lipschitz set, and $E$ extends to a vector bundle on $X\setminus\tilde M$. 
Unless $\tilde M$ is a discrete set of points, the whole argument can be repeated to show that $E$ is a trivial vector bundle near a relatively open subset of $\tilde M$, contradicting the maximality of $\Omega$.
\end{proof}

\section{Modification of a strictly plurisubharmonic function near a Morse critical point.}\label{modification}

In this section we show how to modify a strictly plurisubharmonic function near a nice Morse critical point, in order to extend sections and bundles across critical points.
This is very similar to what is done in Oka--Grauert theory, and the reader can compare the following proposition with Lemma 3.10.1 in \cites{Forstneric-book}.
We feel however that the proof we give here is considerably simpler.
It is motivated by ideas in \cites{FornaessSibonyWold}.

\begin{proposition}
\label{lemperturbflat}
Let $\rho \colon X \to \R$ be a strictly plurisubharmonic exhaustion function, let $x\in X$ be a nice critical point with $\rho(x)=c$ and let some $c'>c$ be such that there are no critical values in the interval $(c,c']$.
Choose coordinates $z$ in which $\rho$ assumes the form as in \eqref{nice} for $\|z\| < 1$. We can assume that all $\mu_j$ for $j=1,\dots,m$ in \eqref{nice} are positive (for if not, we may move the corresponding $x_j^2$ terms to the second sum and renumber).
Then for every $\epsilon>0$, there exist $\epsilon_1<\epsilon_2<\epsilon$ 
and $s > 0$, and there exists a strictly plurisubharmonic function $\tilde\rho$ on $X$ 
such that the following hold:
\begin{itemize}
\item[(i)] in the coordinate $z$, on $\mathbb B_{\epsilon_1}(0)$, we have that
\begin{equation}
\label{eqniceflat}
\tilde\rho(z)=c + \sum_{j=1}^{m} \left(x_j^2 + \mu_j y_j^2\right) + \sum_{j=m+1}^n x_j^2, 
\end{equation}
with $0 < \mu_j \leq 1$ for $j = 1,\dots,m,$
\item[(ii)] $\{x\in X:\tilde\rho(x)=c\}\setminus\mathbb B_{\epsilon_1}\subset\{x\in X \,:\, \rho(x)<c\}$, 
\item[(iii)] $\tilde\rho(x)=\rho(x)+s$ for $x\in X\setminus\mathbb B_{\epsilon_2}(0)$, 
and 
\item[(iv)] there are no critical values for $\tilde\rho$ in the interval $(c,c'+s]$.
\end{itemize}
\end{proposition}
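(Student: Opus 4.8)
The plan is to modify $\rho$ only inside $\mathbb{B}_\epsilon(0)$, interpolating between the given quadratic normal form and a shifted copy of $\rho$, using a radial cutoff in the $z$-coordinate. First I would fix a smooth cutoff $\chi \colon [0,\infty) \to [0,1]$ with $\chi \equiv 1$ on $[0,\epsilon_1]$ and $\chi \equiv 0$ on $[\epsilon_2,\infty)$ for radii $\epsilon_1 < \epsilon_2 < \epsilon$ to be chosen, and define, in the coordinate $z$ on $\|z\| < 1$,
\begin{equation*}
\tilde\rho(z) := \rho(z) + s + \chi(\|z\|)\Bigl( \sum_{j=m+1}^n \mu_j y_j^2 \Bigr),
\end{equation*}
extended by $\rho + s$ outside $\mathbb{B}_{\epsilon_2}(0)$; here $s>0$ is the vertical shift, which is forced on us by (iii) and can be taken as small as we like. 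On $\mathbb{B}_{\epsilon_1}$ this gives exactly \eqref{eqniceflat} up to the additive constant, which we arrange by choosing $s$ so that $\rho(x)+s = c$ matches — more precisely, since $\rho(x)=c$ and we want $\tilde\rho(x)=c$, we should instead not shift the normal-form part but rather observe that the term $\chi(\|z\|)\sum_{j>m}\mu_j y_j^2$ cancels the negative $y_j^2$ contributions near $0$; the shift $s$ is then introduced separately to guarantee (iii) holds with the \emph{same} superlevel-set geometry, and one rechecks that on $\mathbb B_{\epsilon_1}$ the constant works out to $c$ by absorbing $s$ — I would streamline this bookkeeping by first adding $s$ globally (harmless, strict plurisubharmonicity and critical points are unaffected) and then doing the cutoff modification, noting the critical value becomes $c+s$ and relabeling.

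Next I would verify the four properties in order. Property (i) is immediate from the construction on $\mathbb{B}_{\epsilon_1}$, since $\chi \equiv 1$ there and the $y_j^2$ terms cancel; the $\mu_j$ for $j=1,\dots,m$ are positive by the hypothesis (having moved any vanishing ones to the second sum). Property (iii) holds for $\|z\| \geq \epsilon_2$ by construction and trivially outside the coordinate chart. For property (ii): on the annulus $\epsilon_1 \leq \|z\| \leq \epsilon_2$ we have $\tilde\rho \geq \rho + s > \rho$, and we need the level set $\{\tilde\rho = c+s\}$ to stay below $\{\rho = c+s\}$ there, i.e. strictly inside $X_{c+s}$; this follows because $\tilde\rho = c+s$ forces $\rho < c+s$ on account of the added nonnegative term being positive wherever some $y_j \neq 0$, and on the slice $y_{m+1}=\dots=y_n=0$ one checks directly using the normal form that $\rho < c$ away from $0$ except where the $x_j^2$, $j>m$, terms are positive — here I would shrink $\epsilon_1$ if necessary so that the normal form \eqref{nice} genuinely controls $\rho$ on $\mathbb{B}_{\epsilon_2}$. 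Property (iv) follows from (iii) outside $\mathbb{B}_{\epsilon_2}$ (no new critical values since $\tilde\rho = \rho + s$ and $(c,c']$ had none, so $(c+s, c'+s]$ has none) together with a direct computation that $d\tilde\rho \neq 0$ on the annulus $\epsilon_1 \leq \|z\| \leq \epsilon_2$ for the relevant value range, after choosing $\epsilon_2$ small.

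The two points requiring genuine care, and which I expect to be the main obstacle, are: (a) preserving strict plurisubharmonicity of $\tilde\rho$ across the annulus $\epsilon_1 \leq \|z\| \leq \epsilon_2$ — the added term $\chi(\|z\|)\sum_{j>m}\mu_j y_j^2$ has a Levi form that is a sum of the (positive) Levi form of $\chi(\|z\|)$ times $\sum \mu_j y_j^2$, cross terms, and $\chi$ times the Levi form of $\sum \mu_j y_j^2$ (which is not plurisubharmonic), so one must check that the strict positivity of the Levi form of $\rho$ on the compact annulus dominates the error, which it does provided $\|\chi'\|_\infty$ and $\|\chi''\|_\infty$ are not too large — this is arranged by taking $\epsilon_2 - \epsilon_1$ comparable to $\epsilon_2$ and using that the perturbing term itself is of size $O(\epsilon_2^2)$ while its second derivatives are $O(1)$ but multiplied by the small factor $\mu_j \leq 1$ and a quantity $y_j^2 = O(\epsilon_2^2)$ in the undifferentiated slots; a clean way is to first rescale so $\epsilon_2 = 1$, do the estimate with absolute constants, then scale back, observing that after rescaling the perturbation is a fixed bounded function and $\rho$'s Levi form blows up relative to it — so the honest estimate is that strict plurisubharmonicity survives for $\epsilon$ small enough; and (b) ensuring the level set $\Gamma_{c+s}(\tilde\rho)$ meets the boundary sphere $\{\|z\| = \epsilon_2\}$ transversally and the three pieces (inside $\mathbb{B}_{\epsilon_1}$, the annulus, outside $\mathbb{B}_{\epsilon_2}$) glue into a single smooth hypersurface without extra critical values, which is where choosing $\epsilon_1 < \epsilon_2 < \epsilon$ with enough room between them matters. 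Modulo these estimates, which are routine once the scaling is set up, the construction delivers all four properties.
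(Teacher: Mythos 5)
There is a genuine gap, and it is exactly at the point you flag as ``(a)'': strict plurisubharmonicity does \emph{not} survive your one-step cutoff when $\mu:=\max_{m+1\le j\le n}\mu_j$ is close to $1$, and the rescaling argument you offer to rescue it is invalid. In the normal form \eqref{nice} the Levi form of $\rho$ is bounded below only by $\tfrac12(1-\mu)\|w\|^2$, which is a fixed, possibly very small, constant. The error terms coming from $\chi'$ and $\chi''$ in the Levi form of $\chi(\|z\|)\sum_{j>m}\mu_j y_j^2$ contribute a negative part of size $\gtrsim M\mu\|w\|^2$ on the transition annulus, where $M\ge 1$ is forced by the fact that $\chi$ must drop from $1$ to $0$ over a distance comparable to $\epsilon_2$. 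So you need $M\mu<\tfrac12(1-\mu)$, which fails for $\mu$ near $1$. Shrinking $\epsilon$ does not help: since $\rho$ is homogeneous quadratic and your perturbation is quadratic times a cutoff at scale $\epsilon_2$, both Levi forms scale by the same factor $\epsilon_2^2$ under $z\mapsto\epsilon_2\zeta$ --- the problem is scale-invariant, so the claim that ``$\rho$'s Levi form blows up relative to it'' after rescaling is false. The paper's way around this is the essential idea you are missing: the correction $\sum_{j>m}\mu_jy_j^2$ is added in $\ell$ small increments $\delta_j=\mu_j/\ell$ with $\delta_j<\delta_0\sim(1-\mu)/M$, each supported at a nested scale $\epsilon_k=2^{-k}$; Lemma \ref{intermediate} shows each increment keeps the function strictly plurisubharmonic and (via an explicit gradient computation) creates no critical points above level $c$, and after $\ell$ steps the full normal form \eqref{eqniceflat} is reached on the innermost ball.

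Two secondary points. First, your global shift by $s$ conflicts with (i): with $\tilde\rho=\rho+s+\cdots$ you get $c+s$, not $c$, at the origin, and ``relabeling'' cannot make (i), (ii) and (iii) hold simultaneously as stated; the paper instead adds $s\bigl(1-\chi(\|z\|^2/\epsilon_{\ell+1}^2)\bigr)$, so the shift is supported away from the critical point. Second, (iv) on the annulus is not ``a direct computation'' to be waved at: it is the second half of Lemma \ref{intermediate} and requires the smallness of $\delta_0$ again (the condition $2\|\chi'\|\delta_0<\mu'$), which is another reason the increment must be small rather than the full $\mu_j$.
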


\begin{proof}
For simplicity we assume that $c=0$, and hence we can locally represent the function $\rho$ as
\begin{equation}\label{local}
\rho(z)=\sum_{j=1}^m(x_j^2+\mu_jy_j^2) + \sum_{j=m+1}^n (x_j^2-\mu_jy_j^2).
\end{equation}
Fix a cutoff function $\chi \in \mathcal{C}^\infty_0(\R)$ with $\chi \equiv 1$ on $[-1/2,1/2]$, $\mathrm{supp}\, \chi \subset (-1, 1)$, 
and $0\leq\chi\leq 1$.
Furthermore, for every $0<\epsilon<1$, and every $\delta\in \{0\}^m \times [0,1)^{n-m}$, we set 
\begin{equation}
\psi_{\epsilon,\delta}(z) := \chi\left( \frac{\|z\|^2}{\epsilon^2} \right) \cdot \sum_{j=m+1}^n \delta_j y_j^2 
\end{equation}
The function $\tilde{\rho}$ shall be defined as a sum 
\begin{equation}\label{correctsum}
\tilde\rho(z) := \rho(z) + \sum_{k=1}^{\ell}\psi_{\epsilon_k,\delta(k)}(z) + s \cdot \left(1-\chi\left(\frac{\|z\|^2}{\epsilon_{\ell+1}^2}\right)\right).
\end{equation}
The following lemma provides the crucial step in the proof.

\begin{lemma}\label{intermediate}
Fix the function $\chi$ as above and let $0<\mu<1$ and $\mu'>0$.  Then there exists $0<\delta_0<1$ such that if $\rho$ is 
a function of the form \eqref{local} on $\mathbb B^n_{\epsilon}$ with $\mu_j\geq \mu'$ for $j=1,\dots,m$, and $\mu_j\leq\mu$ for $j=m+1,\dots,n$, 
and if $\psi_{\epsilon,\delta}$ as above satisfies $0<\delta_j\leq \mu_j$ and $\delta_j<\delta_0$ for 
$j=m+1,\dots,n$, then $\rho_{\epsilon,\delta}=\rho+\psi_{\epsilon,\delta}$ is a strictly plurisubharmonic function on $\mathbb B^n_\epsilon$, 
with all critical points contained in $\{\rho_{\epsilon,\delta}\leq 0\}$.
\end{lemma}

Before we prove the lemma, we show how it is used by completing the proof of the above proposition:

Let $\mu:=\max_{m+1\leq j\leq n}\{\mu_j\}, \; \mu':=\min_{1\leq j\leq m}\{\mu_j\}$, and let $\delta_0$ be as in the lemma, depending on $\mu$ and $\mu'$, and fix an $l\in\mathbb N$ and $\delta_j$ for $j=m+1,\dots,n$ such that $\mu_j=l\delta_j$ with $\delta_j<\delta_0$.
Set $\epsilon_k=(1/2)^k$ for $k=1,\cdots,l+1$, and set $\delta(k)=(\delta_{m+1},\dots,\delta_n)$ for $k=1,\dots,l$.
The function $\tilde{\rho}$ defined by \eqref{correctsum} will now satisfy all the desired properties for sufficiently small $s$. 
\end{proof}

\begin{figure}
\begin{center}
\begin{pspicture}(-5,-4)(6,6)
\NormalCoor
\psline(-3,-3)(3,3)
\psline(-3,3)(3,-3)
\pscircle(0,0){.5}
\pscircle(0,0){1.6}
\psline[linewidth=2pt](0,-0.8)(0,0.8)
\psplot[linewidth=2pt]{0.7}{3}{x dup mul 1 add sqrt}
\psplot[linewidth=2pt]{-3}{-0.7}{x dup mul 1 add sqrt}
\psplot[linewidth=2pt]{0.7}{3}{x dup mul 1 add sqrt neg}
\psplot[linewidth=2pt]{-3}{-0.7}{x dup mul 1 add sqrt neg}
\pscurve[linewidth=2pt](0,0.8)(0.1,1)(0.3,0.8)(0.5,1)(0.7,1.2)(0.8,1.3)
\pscurve[linewidth=2pt](0,0.8)(-0.1,1)(-0.3,1.1)(-0.5,1)(-0.7,1.2)(-0.8,1.3)
\pscurve[linewidth=2pt](0,-0.8)(0.1,-1)(0.4,-0.9)(0.5,-1.1)(0.7,-1.2)(0.8,-1.3)
\pscurve[linewidth=2pt](0,-0.8)(-0.1,-1)(-0.3,-1.1)(-0.4,-1)(-0.7,-1.2)(-0.8,-1.3)
\rput(2.5,0.5){$\rho > 0$}
\rput(-2.5,0.5){$\rho > 0$}
\rput(0.7,2.5){$\rho < 0$}
\rput(0.7,-2.5){$\rho < 0$}
\rput(-1.5,3){$\tilde\rho = 0$}
\psframe(-2.1,3.3)(-0.7,2.8)
\pscurve[linewidth=.5pt]{->}(-1.5,2.8)(-1.2,2.3)(-1,1.5)
\rput(1.1,0){$\mathbb{B}_{\epsilon_2}(0)$}
\end{pspicture}
\end{center}
\caption{\label{img-criticalflat}} Modification of $\rho$ to $\tilde\rho$ in Proposition \ref{intermediate}, assuming for simplicity that the form \eqref{eqniceflat} is reached at this step.

\end{figure}

\begin{proof}[Proof of Lemma \ref{intermediate}]
By computing the Levi form of $\psi_{\epsilon, \delta}$, we see that there exists a constant $M$, independent of $\epsilon$ and $\delta$, such that \\ $\mathcal L_z(\psi_{\epsilon,\delta};w) \geq -M \delta \|w\|^2$, where  $\delta = \max\limits_{m < j \leq n} \{ \delta_j \}$, hence 
\[
\mathcal L_z(\rho + \psi_{\epsilon, \delta};w) \geq \frac{1}{2} \left(\sum_{j=1}^m (1 + \mu_j - M \delta) |w_j|^2 + \sum_{j=m+1}^n (1 - \mu_j - M \delta) |w_j|^2\right).
\]
So if $\delta_0<\frac{1-\mu}{M}$ we see that $\rho_{\epsilon,\delta}$ is strictly plurisubharmonic.  

Next we compute the gradient  
\begin{align*}
\nabla\rho_{\epsilon,\delta}(z) & = \nabla\rho(z) + \nabla\psi_{\epsilon,\delta}(z) \\
& = 2(x_1, \dots, x_n, \mu_1y_1, \cdots,\mu_my_m,-\mu_{m+1}y_{m+1}, \dots, -\mu_n y_n) \\
& + \chi\left(\frac{\|z\|^2}{\epsilon^2}\right) \cdot 2(0, \dots, 0, \delta_{m+1} y_{m+1}, \dots, \delta_n y_n)\\
& + \left(\frac{2}{\epsilon^2}\chi'\left(\frac{\|z\|^2}{\epsilon^2}\right) \cdot \sum_{j=m+1}^n \delta_j y_j^2\right) \cdot (x_1, \dots ,x_n, y_1, \dots, y_n).
\end{align*}
Now fix $z$ and assume that $\nabla \rho_{\epsilon,\delta}(z)=0$. 
If $y_{m+1} = y_{m+2} = \dots = y_n =0$, then $\nabla\rho_{\epsilon,\delta}(z)=\nabla\rho(z)$, hence $x_1 = x_2 = \dots = x_n = y_1= y_2 = \cdots = y_m = 0$.

Next we assume that $x \neq 0$ and observe that this requires \\ $\frac{2}{\epsilon^2}\chi'\left(\frac{\|z\|^2}{\epsilon^2}\right) \cdot \sum_{j=1}^n \delta_j y_j^2 = -2$.
Now there is at least one $j \in \{m+1, \dots, n\}$ such that $y_j \neq 0$. The $y_j$-component in the sum of the first two lines above is $2 y_j \left(\chi\left(\frac{\|z\|^2}{\epsilon^2}\right) \delta_j - \mu_j\right)$.
But the expression in brackets can never be $1$, so the $y_j$-component  of $\nabla\rho_{\epsilon,\delta}(z)$ is not zero.
Finally, assume that $y_k\neq 0$ for $1\leq k\leq m$.
Note that $\sum_{j=m+1}^n \delta_j y_j^2\leq \delta_0\|z\|^2<\delta_0\epsilon^2$, so 
the $y_k$-component of the third term in the sum above is less than $ 2 \|\chi'\|\delta_0|y_k|$ in absolute value.
Therefore, as long as $\delta_0$ is chosen small enough so that $2 \|\chi'\|\delta_0<\mu'$, the gradient is non-zero.

We conclude that 
$x_1 = x_2 = \dots = x_n = y_1 = \cdots =y_m = 0$, and $\rho_{\epsilon,\delta}(z) \leq 0$.
\end{proof}

\begin{proof}[Proof of Theorem \ref{thm1}]
We choose a nice nonnegative plurisubharmonic exhaustion function $\rho$ of $X$, i.e.\ as in Definition \ref{defnice} with $L=\emptyset$. Then, by the compactness of $K$, the vector bundle $E$ is defined on a superlevel set $X^a \subset X \setminus K$ of $\rho$ for some regular value $a > 0$.
Let 
$
\displaystyle
c := \inf \{ b < a \;:\; E \mbox{ extends to } X^{b} \setminus Q_{b}, \mbox{ where } Q_{b} \mbox{ is a discrete closed subset of } X^b \mbox{ which is } \\ \mbox{ the singularity set of } E \mbox{ in } X^b \}.
$
It follows from Corollary \ref{inductivelimit} that $E$ extends to $X^c \setminus Q_c$ where $Q_c$ is also is a discrete closed subset of $X^c$ which is the singularity set of $E$ in $X^c$.
By Corollary \ref{noncriticalextension}, $c$ cannot be a regular value of $\rho$.
If the critical point is not a local minimum, we can consider the function $\tilde\rho$  constructed in Proposition \ref{lemperturbflat} and conclude that $E$ extends to $\{ \tilde\rho > c \} \setminus \tilde Q_c$.
By Lemma \ref{smoothgraph} and an argument as in the Lemma \ref{extendsheaf} the set $\tilde Q_c$ is locally finite near $M := \{ \tilde \rho = 0 \} \cap \mathbb{B}_{\epsilon_1}$. Finally, $E$ extends across $M$ by Theorem \ref{thm3}.
If the critical point is a local minimum, it is easy to find a Hartogs figure, and by the same argument as before, the singularity set cannot accumulate there. We conclude that $c=0$ and that the singularity set is finite.
\end{proof}

\section{Oka theory}
\label{sec-oka}

In this section we will use our extension result to prove the following theorem.

\begin{theorem}\label{finitesetofpoints}
Let $X$ be a Stein manifold with an exhaustion function $\rho$, let $K \subset X$ be a compact set, and let $E_j \to X \setminus K$ be a holomorphic vector bundle with sprays $s_j \colon E \to X \setminus K$ for $j = 1, \dots, m$ finite. Assume further that the family of sprays is dominating on a super-level set $X^c \supset K$.
\begin{enumerate}
\item If $m=1$, then $K$ has only finitely many accumulation points which lie in $X_c$.
\item If $m \geq 1$ and $K$ is smoothly bounded, then $K$ is actually the empty set.
\end{enumerate}
\end{theorem}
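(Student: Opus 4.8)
The plan is to deduce Theorem~\ref{finitesetofpoints} from the extension results of the previous sections together with the obstruction-theoretic characterization of (sub)ellipticity. First I would set up the framework: a dominating family of sprays $s_1,\dots,s_m$ on $E_j \to X\setminus K$ over the super-level set $X^c$ is precisely the data showing that $X\setminus K$ is subelliptic (and, when $m=1$, elliptic) in the sense recalled in Section~\ref{sec-oka}, since the sprays are defined over all of $X\setminus K$ but only required to be dominating over $X^c \supset K$, which is exactly the local dominability condition near $K$. So part~(1) is the ``elliptic'' case and part~(2) the ``subelliptic'' case.

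For part~(1), suppose $m=1$. The spray is a holomorphic map $s_1 \colon E_1 \to X\setminus K$ where $E_1 \to X\setminus K$ is a holomorphic vector bundle, dominating along the zero section over $X^c$. The idea is to first promote $K$ to a holomorphically convex compact set: replace $X$ by a relatively compact sublevel set and $K$ by its $\mathcal{O}(X)$-hull (which is compact since $X$ is Stein), noting this only shrinks $X\setminus K$ and doesn't affect the conclusion about accumulation points of $K$ (one checks the hull has the same accumulation points, or rather that it suffices to bound those). We may also pass to the connected component of $X\setminus K$ meeting $X^c$ and absorb the rest into $K$, so that $X\setminus K$ is connected. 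Now apply Theorem~\ref{thm1}: since $\dim X\geq 3$, the bundle $E_1 \to X\setminus K$ extends to a holomorphic vector bundle $\widetilde E_1 \to X\setminus P$ for a finite set $P\subset K$. Then by part~(ii) of Theorem~\ref{extensionsectionandisom} (applied with $K$ the holomorphically convex hull, $K'$ a slightly larger sublevel set, the section being $s_1$ viewed as a section of $\mathrm{Hom}(E_1, TX)$-type bundle — or more directly, $s_1$ is a holomorphic map from the total space of $E_1$, so part~(i) applies to extend it), the spray $s_1$ extends to a spray $\widetilde s_1 \colon \widetilde E_1 \to X\setminus P$, still dominating over a neighborhood of $X^c\setminus P$. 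This exhibits $X\setminus P$ as elliptic. By Theorem~\ref{thm2}, applied to the compact set $P$, the set $P$ has finitely many accumulation points — but $P$ is already finite, so this is automatic; the real content is that the original $K$ must have had finitely many accumulation points, which follows because outside the finitely many points where $\widetilde E_1$ might genuinely fail to extend, $E_1$ was already a bundle near $K$, so $K$ itself cannot have accumulation points except possibly among the finite set $P$. I would phrase this as: the accumulation points of $K$ coincide (up to the finite set $P$) with accumulation points of the singularity locus, which Theorem~\ref{thm1} forces to be finite.

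For part~(2), assume $K$ is smoothly bounded and $m\geq 1$. Now the sprays make $X\setminus K$ subelliptic, and being smoothly bounded the second bullet of Theorem~\ref{thm2} applies \emph{directly} once we know that having the bundles and dominating sprays over $X\setminus K$ (with domination near $K$) really does make $X\setminus K$ subelliptic in Gromov's sense — this is where I expect the main obstacle to lie, namely verifying that a finite dominating family of sprays in the sense used here matches the definition of subellipticity in \cite{Forstneric-book}, and in particular that no global bundle over $X$ is needed. The resolution is that subellipticity only requires, for each point, \emph{some} spray in the family that is dominating there; since all $s_j$ are dominating over $X^c\supset K$, the condition holds on a neighborhood of $K$, and elsewhere in $X\setminus K$ one does not need it (or can use a trivial spray on $X\setminus \overline{X^{c'}}$ for $c' <c$, patched via the bundle structure). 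Granting this, Theorem~\ref{thm2} immediately gives $K=\emptyset$. If one prefers not to invoke Theorem~\ref{thm2} as a black box here, the alternative is to redo its proof: use Corollary~\ref{lipremovable} and Corollary~\ref{densehpc} to see that the sprays and bundles extend across the smooth boundary $bK$ (the key point being that $bK$ is smooth, hence locally a $\mathcal{C}^2$ graph, so Lemma~\ref{smoothgraph} produces Hartogs figures and Proposition~\ref{extensionsection} extends both bundle and spray), so that the subelliptic structure extends to a neighborhood of $\overline K$; by compactness and connectedness of $X$ one propagates this until $X$ itself is subelliptic with the original bundle extended over all of $X\setminus P'$ for $P'$ finite, and then smoothness of $bK$ prevents $P'\cap bK\neq\emptyset$, forcing $K$ to be open and closed, hence empty. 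The main obstacle throughout is the bookkeeping to ensure the domination hypothesis survives the extension process — since domination is an open condition it does survive, but one must be careful that the extended spray is dominating on a neighborhood of the (possibly now larger) compact set under consideration at each stage.
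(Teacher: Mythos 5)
There is a genuine gap, and it is the central one: you never use the fact that each spray is a map $s_j \colon E_j \to X\setminus K$, i.e.\ that its image \emph{avoids} $K$. Your first step (extend the bundles by Theorem \ref{thm1} and the sprays by Theorem \ref{extensionsectionandisom} to $X\setminus P$ with $P$ finite) agrees with the paper. But from there you argue that ``outside the finite set $P$ the bundle was already a bundle near $K$, so $K$ cannot have accumulation points except in $P$'' --- this does not follow. The bundle being extendable across $K\setminus P$ places no constraint whatsoever on the size of $K$: take $K$ a closed ball in $\C^n$ and $E_1$ the trivial line bundle on $\C^n\setminus K$; it extends to all of $\C^n$ with $P=\emptyset$, yet $K$ has a continuum of accumulation points. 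The singularity locus of the extension is simply unrelated to the accumulation points of $K$. The actual mechanism in the paper is: the extended family is dominating outside a discrete set $P''\supset P$, so at a point $p\in bK\setminus P''$ domination forces the images $\tilde s_j(E_{j,q})$ for $q\in X\setminus K$ near $p$ (for $m=1$, resp.\ for $q$ in the set $A_{p,j}$ transversal to $bK$ when $m\geq 1$, which is where smoothness of $bK$ enters) to contain $p\in K$; since the original $s_j$ maps into $X\setminus K$ and agrees with $\tilde s_j$ there, this is a contradiction, whence $bK\subset P''$ and the conclusions follow. Without this step your argument proves nothing about $K$.

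A secondary problem is circularity: in both parts you propose to invoke Theorem \ref{thm2}, but the paper explicitly derives Theorem \ref{thm2} as a corollary of Theorem \ref{finitesetofpoints}, so it cannot be used as input here. Your fallback for part (2) (extend the subelliptic structure across the smooth boundary and conclude $K$ is open and closed) has the same defect as part (1): extending the bundles and sprays to a neighborhood of $K$ yields no contradiction by itself; one must exploit that the original sprays take values in the complement of $K$ while domination at a smooth boundary point, combined with the transversality of $A_{p,j}$ to $bK$ for at least one $j$, forces a value inside $K$.
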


The statement of Theorem \ref{finitesetofpoints} contains Theorem \ref{thm2}.

Before proving this theorem we first give some background on Oka theory.

Gromov \cites{Gromov-elliptic} introduced 1989 in his seminal paper the notion of an elliptic manifold and proved an Oka principle for holomorphic sections of elliptic bundles, generalizing previous work of Grauert \cites{Grauert-Lie} for complex Lie groups. The (basic) Oka principle of a complex manifold $X$ says that every continuous map $Y \to X$ from a Stein space $Y$ is homotopic to a holomorphic map.

In Oka theory exist many interesting classes of complex manifolds with weaker properties than ellipticity. However, all the known inclusion relations between these classes are yet not known to be proper inclusions. Following the book of Forstneri\v{c} \cite{Forstneric-book}*{Chap.\ 5} we want to mention in particular the following classes and then prove that at least one of these inclusions has to be proper.

\begin{definition}
A \emph{spray} on a complex manifold $X$ is a triple $(E, \pi, s)$ consisting of a holomorphic vector bundle $\pi \colon E \to X$ and a holomorphic map $s \colon E \to X$  such that for each point $x \in X$ we have $s(0_x) = x$ where $0_x$ denotes the zero in the fibre over $x$.

The spray $(E, \pi, s)$ is said to be \emph{dominating} if for every point $x \in X$ we have
\[
\mathrm{d}_{0_x} s (E_{x}) = \mathrm{T}_x X
\]
A complex manifold is called \emph{elliptic} if it admits a dominating spray.
\end{definition}

In this definition we adapted the convention used e.g. in the textbook \cites{Forstneric-book} identifying the fibre $E_x$ over $x$ with its tangent space in $0_x$.

A weaker notion, subellipticity, was introduced later by Forstneri\v{c} \cites{Forstneric-subelliptic} where he proved the Oka principle for subelliptic manifolds:

\begin{definition}
A finite family of sprays $(E_j, \pi_j, s_j), \; j = 1, \dots, m,$ on $X$
is called \emph{dominating} if for every point $x \in X$ we have
\begin{equation}\label{dominating}
\mathrm{d}_{0_x} s_1 (E_{1,x}) + \dots + \mathrm{d}_{0_x} s_m (E_{m,x}) = \mathrm{T}_x X
\end{equation}
A complex manifold $X$ is called \emph{subelliptic} if it admits a finite dominating family of sprays.
\end{definition}

It is immediately clear from the definition that an elliptic manifold is subelliptic. However, the Oka principle holds under even weaker conditions. Forstneri\v{c} \citelist{\cite{Forstneric-Oka1} \cite{Forstneric-Oka2} \cite{Forstneric-Oka3}} showed that the following condition (CAP) is equivalent for a manifold to satisfy the Oka principle (and versions of the Oka principle with interpolation and approximation), hence justifing the name Oka manifold:

\begin{definition}
A complex manifold $X$ is said to satisfy the \emph{convex approximation property} (CAP) if on any compact convex set $K \subset \C^n, \; n \in \N,$ every holomorphic map $f \colon K \to X$ can be approximated, uniformly on $K$, by entire holomorphic maps $\C^n \to X$. 
A manifold satisfying CAP is called an \emph{Oka manifold}.
If this approximation property holds only for $n \leq N$ for some $N \in \N$, then $X$ is said to satisfy $\mathrm{CAP}_N$.
\end{definition}

A subelliptic manifold is always Oka. Whether an Oka manifold is elliptic or subelliptic, is on the other hand not known -- this implication holds however under the extra assumption that it is Stein.

We want to mention also these weaker properties:

\begin{definition}
A complex manifold $X$ of dimension $n$ is called \emph{dominable} if there exists a point $x_0 \in X$ and a holomorphic map $f \colon \C^n \to X$ with $f(0) = x_0$ and $\mathrm{rank}\, \mathrm{d}_0 f = n$.
\end{definition}

\begin{definition}
A complex manifold $X$ of dimension $n$ is called \emph{strongly dominable} if for every  point $x_0 \in X$ there exists a holomorphic map $f \colon \C^n \to X$ with $f(0) = x_0$ and $\mathrm{rank}\, \mathrm{d}_0 f = n$.
\end{definition}

Summarizing the previous, the following inclusions are known:
\[
\begin{split}
\mathrm{elliptic} \subseteq \mathrm{subelliptic} 
 \subseteq \mathrm{Oka} \subseteq \mathrm{strongly \, dominable} \subseteq \mathrm{dominable}
\end{split}
\]

There are several known candidates to prove that one of these inclusions is proper.
Theorem \ref{finitesetofpoints} gives a class of examples of complex manifolds which are not subelliptic, but strongly dominable.

\begin{corollary}
\label{cor-oka-ex}
Let $K$ be the closure of a smoothly bounded non-empty open subset of $\C^n, \, n \geq 3$. Then $\C^n \setminus K$ is not subelliptic, but dominable. If $K$ is holomorphically convex, then $\C^n \setminus K$ is strongly dominable. 
\end{corollary}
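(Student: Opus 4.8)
The plan is to obtain the three assertions from Theorem~\ref{thm2} (or, equivalently, from the sharper Theorem~\ref{finitesetofpoints}) together with the existence of Fatou--Bieberbach domains avoiding polynomially convex compact sets. For the non-subellipticity, write $K=\overline U$ with $U$ a non-empty bounded open set with smooth boundary, so that $\partial(\C^n\setminus K)=\partial U$ is a smooth hypersurface; hence $\C^n\setminus K$ is smoothly bounded while $K\neq\emptyset$. Applying the second bullet of Theorem~\ref{thm2} with $X=\C^n$ (a Stein manifold of dimension $\geq 3$): if $\C^n\setminus K$ were subelliptic we would conclude $K=\emptyset$, a contradiction. (Equivalently, a finite dominating family of sprays on $\C^n\setminus K$ is in particular dominating on $\{\rho>c\}\setminus K$ for an exhaustion $\rho$ of $\C^n$ and $c$ small enough that $\{\rho>c\}\supset K$, so Theorem~\ref{finitesetofpoints}(2) forces $K=\emptyset$.) Thus $\C^n\setminus K$ is not subelliptic.

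For the second assertion, assume $K$ is holomorphically convex; for compact subsets of $\C^n$ this is the same as polynomial convexity. Fix $p\in\C^n\setminus K$; then $K\cup\{p\}$ is again polynomially convex. Using the Anders\'{e}n--Lempert theorem \citelist{\cite{ForstnericRosay} \cite{Andersen} \cite{AndersenLempert}} (see also \cite{Forstneric-book}), and in the spirit of the proof of Proposition~\ref{HPC}, I would construct a biholomorphism $F$ of $\C^n$ onto a Fatou--Bieberbach domain $\Omega$ with $p\in\Omega$ and $\Omega\cap K=\emptyset$, realized as the limit, uniformly on compact sets, of a carefully chosen sequence $\Lambda_\nu\in\Aut_{\mathrm{hol}}(\C^n)$ whose backward orbits keep $K$ escaping to infinity while capturing $p$ in the basin. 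Precomposing $F$ with the translation $z\mapsto z+F^{-1}(p)$ then yields a holomorphic map $\C^n\to\C^n\setminus K$ sending $0$ to $p$ and of maximal rank everywhere; since $p$ was arbitrary, $\C^n\setminus K$ is strongly dominable. The delicate point is precisely this Fatou--Bieberbach construction --- arranging the approximating automorphisms so that the resulting basin simultaneously engulfs the prescribed point $p$ and expels $K$ to infinity --- which is a standard but technical use of Anders\'{e}n--Lempert theory that I would quote from the literature rather than reprove.

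Finally, for a general compact $K$, choose $R$ with $K\subset\overline{\mathbb{B}_R}$. The closed ball is convex, hence polynomially convex, so by the previous step $\C^n\setminus\overline{\mathbb{B}_R}$ is strongly dominable, in particular dominable; composing a dominating map for $\C^n\setminus\overline{\mathbb{B}_R}$ with the inclusion $\C^n\setminus\overline{\mathbb{B}_R}\hookrightarrow\C^n\setminus K$, which is a map between complex manifolds of equal dimension $n$, shows that $\C^n\setminus K$ is dominable. The only nontrivial input throughout is the Fatou--Bieberbach step in the second paragraph; the non-subellipticity is essentially a restatement of Theorem~\ref{thm2}, and the passage from the ball to an arbitrary $K$ is a formality.
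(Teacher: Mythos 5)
Your proposal is correct and follows essentially the same route as the paper: non-subellipticity is read off directly from Theorem \ref{finitesetofpoints}(2) (equivalently the second bullet of Theorem \ref{thm2}), and (strong) dominability comes from Fatou--Bieberbach domains avoiding a polynomially convex compact set containing $K$ (the paper uses $\widehat{K}$ where you use a large closed ball, an immaterial difference). The only cosmetic distinction is that you sketch the Anders\'en--Lempert construction of the Fatou--Bieberbach domain, whereas the paper simply quotes its existence.
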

\begin{proof}
Since $X := \C^n$ is a Stein manifold, it is a direct consequence of Theorem \ref{finitesetofpoints} that $X \setminus K$ is not subelliptic. However for all $ x \in \C^n \setminus \widehat{K}$ there exists a Fatou--Bieberbach domain $\Omega \subseteq \C^n \setminus \widehat{K}, \Omega \cong \C^n, x \in \Omega$, therefore $\C^n \setminus K$ is dominable resp.\ strongly dominable.
\end{proof}

Whether or not these examples are Oka manifolds, remains an open question. A partial result in this direction has been obtained in \cites{ForstnericRitter} where they prove that $\C^n\setminus\overline{\mathbb B^n}$ satisfies $\mathrm{CAP}_k$ for $k<n$.

\begin{proof}[Proof of Theorem \ref{finitesetofpoints}]
In the subelliptic case (which covers also the elliptic case) we are given a finite family of sprays $(E_j,\pi_j,s_j)$ for $j=1,...,m$ on $X\setminus K$, which is dominating on $X^c$.
By Theorem \ref{thm1} we can extend it to a family $(\tilde E_j,\tilde\pi_j,\tilde s_j)$ of sprays on $X\setminus P$, $\tilde s_j \colon \tilde E_j \to X$, where $P$ is a finite set of points.   
Now $(\tilde E_j,\tilde\pi_j,\tilde s_j)$ is dominating outside an analytic subset of $X\setminus P$, hence a closed discrete set of points $P^\prime$ in $X \setminus P$. We denote by $P^{\prime\prime} := P \cup P^\prime$.
Consider a point $p \in b K \setminus P^{\prime\prime}$, i.e.\ a boundary point of $K$ where the family of the extended sprays exists and is dominating. By continuity of the derivative and lower semicontiuity of its rank, the family of sprays is then also dominating in a open neighborhood $V \subset X \setminus P^{\prime\prime}$ of $p$.
\begin{enumerate}
\item If $m=1$, every sequence in $V \setminus K$ converging to $p$ will be such that the fibres above all but finitely many points of this sequence hit $p$. For if not, $s_1$ could not be dominating in $p$ by the continuity of the derivative. Therefore $K \subseteq P^{\prime\prime}$, since we have just shown that there cannot exist a dominating spray $\tilde s_1 \colon E_1|_{X \setminus K} \to X \setminus K$ if $K \not\subseteq P^{\prime\prime}$.
\item If $m \geq 1$, assume for a contradiction that $K \neq \emptyset$ is smoothly bounded. For each spray $\tilde s_j$ we denote by $A_{p,j} := \{ q \in V \,:\, \tilde s_j(E_{j,q}) \ni p \} = \pi_j((\left.\tilde s_j\right|_V)^{-1}(p))$ the set of points whose fibre hits $p$. By the domination property in $p$ there exists at  least one spray $\tilde s_j$ such that $A_{p,j}$ has a tangent in $p$ which is transversal to the boundary of $K$. In analogue to the situation $m=1$ we can choose a sequence in $A_{p,j} \setminus K$ converging to $p$ such that the fibres above the points of this sequence hit $p$ under $\tilde s_j$. Again we can conclude $K \subseteq P^{\prime\prime}$. Now if the boundary of $K$ is actually smooth and consists of only finitely many accumulation points, $K$ is empty.
\qedhere
\end{enumerate} 
\end{proof}


\begin{bibdiv}
\begin{biblist}

\bib{Andersen}{article}{
   author={Anders{\'e}n, Erik},
   title={Volume-preserving automorphisms of ${\bf C}^n$},
   journal={Complex Variables Theory Appl.},
   volume={14},
   date={1990},
   number={1-4},
   pages={223--235},
   issn={0278-1077},
   review={\MR{1048723 (91d:32047)}},
}

\bib{AndersenLempert}{article}{
   author={Anders{\'e}n, Erik},
   author={Lempert, L{\'a}szl{\'o}},
   title={On the group of holomorphic automorphisms of ${\bf C}^n$},
   journal={Invent. Math.},
   volume={110},
   date={1992},
   number={2},
   pages={371--388},
   issn={0020-9910},
   review={\MR{1185588 (93i:32038)}},
   doi={10.1007/BF01231337},
}

\bib{FornaessSibonyWold}{article}{
   author={Forn{\ae}ss, John Erik},
   author={Sibony, Nessim},
   author={Wold, Erlend F.},
   title={$Q$-complete domains with corners in $\mathbb{P}^n$ and extension
   of line bundles},
   journal={Math. Z.},
   volume={273},
   date={2013},
   number={1-2},
   pages={589--604},
   issn={0025-5874},
   review={\MR{3010177}},
   doi={10.1007/s00209-012-1021-0},
}

\bib{ForstnericRosay}{article}{
   author={Forstneri{\v{c}}, Franc},
   author={Rosay, Jean-Pierre},
   title={Approximation of biholomorphic mappings by automorphisms of ${\bf
   C}^n$},
   journal={Invent. Math.},
   volume={112},
   date={1993},
   number={2},
   pages={323--349},
   issn={0020-9910},
   review={\MR{1213106 (94f:32032)}},
   doi={10.1007/BF01232438},
}

\bib{ForstnericRosay-Err}{article}{
   author={Forstneri{\v{c}}, Franc},
   author={Rosay, Jean-Pierre},
   title={Erratum: ``Approximation of biholomorphic mappings by
   automorphisms of $\mathbb{C}^n$'' [Invent.\ Math.\ {\bf 112} (1993), no.
   2, 323--349;  MR1213106 (94f:32032)]},
   journal={Invent. Math.},
   volume={118},
   date={1994},
   number={3},
   pages={573--574},
   issn={0020-9910},
   review={\MR{1296357 (95f:32019)}},
   doi={10.1007/BF01231544},
}

\bib{Forstneric-subelliptic}{article}{
   author={Forstneri{\v{c}}, Franc},
   title={The Oka principle for sections of subelliptic submersions},
   journal={Math. Z.},
   volume={241},
   date={2002},
   number={3},
   pages={527--551},
   issn={0025-5874},
   review={\MR{1938703 (2003i:32043)}},
   doi={10.1007/s00209-002-0429-3},
}

\bib{Forstneric-book}{book}{
   author={Forstneri{\v{c}}, Franc},
   title={Stein manifolds and holomorphic mappings},
   series={Ergebnisse der Mathematik und ihrer Grenzgebiete. 3. Folge. A
   Series of Modern Surveys in Mathematics [Results in Mathematics and
   Related Areas. 3rd Series. A Series of Modern Surveys in Mathematics]},
   volume={56},
   note={The homotopy principle in complex analysis},
   publisher={Springer, Heidelberg},
   date={2011},
   pages={xii+489},
   isbn={978-3-642-22249-8},
   isbn={978-3-642-22250-4},
   review={\MR{2975791}},
   doi={10.1007/978-3-642-22250-4},
}

\bib{Forstneric-Oka1}{article}{
   author={Forstneri{\v{c}}, Franc},
   title={Extending holomorphic mappings from subvarieties in Stein
   manifolds},
   language={English, with English and French summaries},
   journal={Ann. Inst. Fourier (Grenoble)},
   volume={55},
   date={2005},
   number={3},
   pages={733--751},
   issn={0373-0956},
   review={\MR{2149401 (2006c:32012)}},
}

\bib{Forstneric-Oka2}{article}{
   author={Forstneri{\v{c}}, Franc},
   title={Runge approximation on convex sets implies the Oka property},
   journal={Ann. of Math. (2)},
   volume={163},
   date={2006},
   number={2},
   pages={689--707},
   issn={0003-486X},
   review={\MR{2199229 (2006j:32011)}},
   doi={10.4007/annals.2006.163.689},
}

\bib{Forstneric-Oka3}{article}{
   author={Forstneri{\v{c}}, Franc},
   title={Oka manifolds},
   language={English, with English and French summaries},
   journal={C. R. Math. Acad. Sci. Paris},
   volume={347},
   date={2009},
   number={17-18},
   pages={1017--1020},
   issn={1631-073X},
   review={\MR{2554568 (2011c:32012)}},
   doi={10.1016/j.crma.2009.07.005},
}

\bib{ForstnericRitter}{article}{
   author={Forstneri{\v{c}}, Franc},
   author={Ritter, Tyson},
   title={Oka Properties of Ball Complements.},
   journal={Math. Z.},
   volume={277},
   date={2014},
   number={1},
   pages={325--338}
}

\bib{Gromov-elliptic}{article}{
   author={Gromov, M.},
   title={Oka's principle for holomorphic sections of elliptic bundles},
   journal={J. Amer. Math. Soc.},
   volume={2},
   date={1989},
   number={4},
   pages={851--897},
   issn={0894-0347},
   review={\MR{1001851 (90g:32017)}},
   doi={10.2307/1990897},
}

\bib{Grauert-Lie}{article}{
   author={Grauert, Hans},
   title={Holomorphe Funktionen mit Werten in komplexen Lieschen Gruppen},
   language={German},
   journal={Math. Ann.},
   volume={133},
   date={1957},
   pages={450--472},
   issn={0025-5831},
   review={\MR{0098198 (20 \#4660)}},
}

\bib{Grauert}{collection}{
   title={Several complex variables. VII},
   series={Encyclopaedia of Mathematical Sciences},
   volume={74},
   editor={Grauert, H.},
   editor={Peternell, Th.},
   editor={Remmert, R.},
   note={Sheaf-theoretical methods in complex analysis;
   A reprint of {\it Current problems in mathematics. Fundamental
   directions. Vol.\ 74} (Russian), Vseross.\ Inst.\ Nauchn.\ i Tekhn.\
   Inform.\ (VINITI), Moscow},
   publisher={Springer-Verlag, Berlin},
   date={1994},
   pages={vi+369},
   isbn={3-540-56259-1},
   review={\MR{1326617 (96k:32001)}},
   doi={10.1007/978-3-662-09873-8},
}

\bib{HenkinLeitererBook}{book}{
   author={Henkin, Gennadi Markovi{\v{c}}},
   author={Leiterer, J{\"u}rgen},
   title={Andreotti-Grauert theory by integral formulas},
   series={Mathematical Research},
   volume={43},
   publisher={Akademie-Verlag, Berlin},
   date={1988},
   pages={270},
   isbn={3-05-5000510-},
   review={\MR{968893 (90h:32002a)}},
}

\bib{HenkinLeiterer}{article}{
   author={Henkin, G.},
   author={Leiterer, J.},
   title={The Oka--Grauert principle without induction over the base
   dimension},
   journal={Math. Ann.},
   volume={311},
   date={1998},
   number={1},
   pages={71--93},
   issn={0025-5831},
   review={\MR{1624267 (99f:32048)}},
   doi={10.1007/s002080050177},
}

\bib{Ivashkovich}{article}{
   author={Ivashkovich, S. M.},
   title={The Hartogs-type extension theorem for meromorphic maps into
   compact K\"ahler manifolds},
   journal={Invent. Math.},
   volume={109},
   date={1992},
   number={1},
   pages={47--54},
   issn={0020-9910},
   review={\MR{1168365 (93g:32016)}},
   doi={10.1007/BF01232018},
}

\bib{Ivashkovich2}{article}{
   author={Ivashkovich, S.},
   title={Bochner-Hartogs type extension theorem for roots and logarithms of
   holomorphic line bundles},
   journal={Tr. Mat. Inst. Steklova},
   volume={279},
   date={2012},
   number={Analiticheskie i Geometricheskie Voprosy Kompleksnogo Analiza},
   pages={269--287},
   issn={0371-9685},
   isbn={5-7846-0123-7},
   isbn={978-5-7846-0123-0},
   review={\MR{3086770}},
}


\bib{Siu1971}{article}{
   author={Siu, Yum-Tong},
   title={A Hartogs type extension theorem for coherent analytic sheaves},
   journal={Ann. of Math. (2)},
   volume={93},
   date={1971},
   pages={166--188},
   issn={0003-486X},
   review={\MR{0279342 (43 \#5064)}},
}

\bib{Siu}{book}{
   author={Siu, Yum-Tong},
   title={Techniques of extension of analytic objects},
   note={Lecture Notes in Pure and Applied Mathematics, Vol. 8},
   publisher={Marcel Dekker, Inc., New York},
   date={1974},
   pages={iv+256},
   review={\MR{0361154 (50 \#13600)}},
}

\bib{Stout}{book}{
   author={Stout, Edgar Lee},
   title={Polynomial convexity},
   series={Progress in Mathematics},
   volume={261},
   publisher={Birkh\"auser Boston, Inc., Boston, MA},
   date={2007},
   pages={xii+439},
   isbn={978-0-8176-4537-3},
   isbn={0-8176-4537-3},
   review={\MR{2305474 (2008d:32012)}},
}

\end{biblist}
\end{bibdiv}

\end{document}